\documentclass[11pt]{amsart}

\usepackage[T1]{fontenc}
\usepackage[utf8]{inputenc}
\usepackage{lmodern}
\usepackage{microtype}
\usepackage{amsmath,amssymb,amsthm,mathtools}
\allowdisplaybreaks
\usepackage[margin=1in]{geometry}
\usepackage{booktabs}
\usepackage{enumitem}
\usepackage[colorlinks=true,linkcolor=blue,citecolor=blue,urlcolor=blue]{hyperref}
\usepackage{url}
\usepackage{aliascnt}

\newtheorem{theorem}{Theorem}[section]

\newaliascnt{proposition}{theorem}
\newtheorem{proposition}[proposition]{Proposition}
\aliascntresetthe{proposition}

\newaliascnt{lemma}{theorem}
\newtheorem{lemma}[lemma]{Lemma}
\aliascntresetthe{lemma}

\newaliascnt{corollary}{theorem}
\newtheorem{corollary}[corollary]{Corollary}
\aliascntresetthe{corollary}

\newaliascnt{conjecture}{theorem}

\aliascntresetthe{conjecture}

\newaliascnt{hypothesis}{theorem}
\newtheorem{hypothesis}[hypothesis]{Hypothesis}
\aliascntresetthe{hypothesis}

\theoremstyle{definition}
\newaliascnt{definition}{theorem}
\newtheorem{definition}[definition]{Definition}
\aliascntresetthe{definition}
\newtheorem*{mainresults}{Summary of main results}

\theoremstyle{remark}
\newaliascnt{remark}{theorem}
\newtheorem{remark}[remark]{Remark}
\aliascntresetthe{remark}

\usepackage[nameinlink,capitalise]{cleveref}
\crefname{theorem}{Theorem}{Theorems}
\Crefname{theorem}{Theorem}{Theorems}
\crefname{proposition}{Proposition}{Propositions}
\Crefname{proposition}{Proposition}{Propositions}
\crefname{lemma}{Lemma}{Lemmas}
\Crefname{lemma}{Lemma}{Lemmas}
\crefname{corollary}{Corollary}{Corollaries}
\Crefname{corollary}{Corollary}{Corollaries}
\crefname{conjecture}{Conjecture}{Conjectures}
\Crefname{conjecture}{Conjecture}{Conjectures}
\crefname{hypothesis}{Hypothesis}{Hypotheses}
\Crefname{hypothesis}{Hypothesis}{Hypotheses}
\crefname{definition}{Definition}{Definitions}
\Crefname{definition}{Definition}{Definitions}
\crefname{remark}{Remark}{Remarks}
\Crefname{remark}{Remark}{Remarks}

\numberwithin{equation}{section}

\newcommand{\N}{\mathbb N}
\newcommand{\Z}{\mathbb Z}
\newcommand{\tauD}{\tau}
\newcommand{\Tmap}{T}
\newcommand{\AnnT}{\mathcal A}
\newcommand{\Frontier}{\mathcal F}
\newcommand{\HST}{\mathrm{H}_{\mathrm{ST}}}
\newcommand{\HQE}{\mathrm{H}_{\mathrm{QE}}}
\newcommand{\EvPlus}{\mathrm{ev+}}
\newcommand{\OddZero}{\mathrm{odd0}}

\newcommand{\DeltaD}{\Delta}
\DeclareMathOperator{\im}{im}

\title[Square-annular dynamics]{Square-Annular Dynamics and Coalescence Frontiers for \(n+\tau(n)\)}

\author[E. Li]{Eric Li}
\dedicatory{\normalfont\normalsize Trinity College, University of Cambridge}
\date{June 13, 2026}
\thanks{Email addresses: \href{mailto:el593@cam.ac.uk}{el593@cam.ac.uk}, \href{mailto:contact@ericli.com}{contact@ericli.com}.}

\subjclass[2020]{11A25, 11N37, 37P99}
\keywords{divisor function, arithmetic dynamics, coalescence, divisor graph, square annuli}

\hypersetup{
  pdftitle={Square-Annular Dynamics and Coalescence Frontiers for n+tau(n)},
  pdfauthor={Eric Li},
  pdfsubject={Square-annular dynamics and coalescence frontiers for the divisor-successor map},
  pdfkeywords={divisor function, arithmetic dynamics, square annuli, Erdos-Graham problem}
}

\begin{document}
\raggedbottom

\begin{abstract}
Let \(\Tmap(n)=n+\tauD(n)\), where \(\tauD\) is the divisor function.  We study the Erd\H{o}s--Graham coalescence problem by encoding finite-level obstructions in the divisor-successor graph and in square-annular transfer maps.  Coalescence is equivalent both to connectedness of this graph and to synchronization along an infinite non-autonomous sequence of finite annular systems.  The basic identities are
\[
        \im(\AnnT_k)=E_{k+1},
        \qquad
        \Frontier_{k^2}=k^2+E_k,
\]
where \(E_k\) is the set of square-crossing overshoots from below \(k^2\).
We prove a transfer parity law, dynamic frontier bounds for the widths \(W_{k,s}\), and the criterion that \(\liminf_k|\AnnT_k(E_k)|=1\) would imply connectedness.  Unconditionally,
\[
        R(X)\le \log X+2\gamma+O(X^{-1/4}),
\]
and the exit sets are residue-universal, satisfy \(|E_k|\le k^{o(1)}\), and obey
\[
        \frac94K+O(1)\le \sum_{k\le K}|E_k|\ll K(\log K)^3.
\]
Using the shifted-square estimate \(\HST\), obtained from the corrected Henriot--Nair--Tenenbaum theorem in the specialized form of \Cref{prop:henriot-erratum-one-polynomial} and from separate square-shift estimates, we obtain fixed-moment bounds
\[
        \sum_{k\le K}|E_k|^m\ll_m K(\log K)^{C_m}\quad(m\ge2).
\]
A further first-moment refinement to \(K(\log K)^2\) is conditional on the additional, currently unproved, uniform quadratic Euler-product mean-value hypothesis \(\HQE\).  We also prove quantitative large-jump and lower-runner race theorems, isolate interval filling, and formulate a square-gated two-branch criterion.  No proof of the full Erd\H{o}s--Graham problem is claimed.
\end{abstract}

\maketitle

\section{Introduction}

Let
\[
        \tauD(n)=\sum_{d\mid n}1
\]
be the divisor function, and define
\[
        \Tmap(n)=n+\tauD(n) \qquad (n\in\N).
\]
All logarithms are natural; base-two logarithms, when needed, are written explicitly as quotients by \(\log 2\).  Iterates of \(\Tmap\) are always written \(\Tmap^i\), while the annular transfer at level \(k\) is written \(\AnnT_k\).  Erd\H{o}s and Graham \cite[p.~82]{ErdosGraham} ask whether any two positive starting values eventually have a common iterate under \(\Tmap\): for all \(m,n\in\N\), do there exist \(i,j\ge0\) such that
\[
        \Tmap^i(m)=\Tmap^j(n)?
\]
This paper does not solve that problem.  Its aim is to make the finite-level obstructions to coalescence explicit and to give analytic estimates for those obstructions.  Throughout, an unconditional result means unconditional relative to the standard published theorems cited in the bibliography; the only new unproved hypothesis introduced in the paper is \(\HQE\).

Two elementary facts drive the paper.  First, the forward orbits are strictly increasing.  Thus eventual coalescence can be encoded by the connected components of the undirected graph with edges \(n\leftrightarrow \Tmap(n)\).  This graph viewpoint leads to a frontier principle: a component visible below a level \(N\) must cross \(N\), so the number of visible components is bounded by a crossing-endpoint set.  Averaging this crossing count over a short interval and using the Dirichlet divisor problem gives the logarithmic component-count bound in \Cref{thm:component-count}.

Second, the elementary inequality
\[
        \tauD(n)<2\sqrt n
\]
prevents an orbit from skipping a square annulus.  Writing
\[
        I_k=[k^2,(k+1)^2)\cap\N,
\]
each orbit moves from \(I_k\) either within \(I_k\) or into \(I_{k+1}\).  This gives, at each level, a finite transfer map \(\AnnT_k\) from offsets in \(I_k\) to first-entry offsets in \(I_{k+1}\).  The image of this transfer is exactly the exit set \(E_{k+1}\), and the square frontier over \(k^2\) is exactly \(k^2+E_k\).  These identities turn the original infinite coalescence question into a synchronization question for an infinite sequence of finite offset sets, not into a single finite computation.

A point worth emphasizing is that the static exit sets are never singletons: the deficits \(j=1\) and \(j=2\) are active for every \(k\ge2\) and give distinct overshoots.  Consequently a proof cannot come merely from showing that \(|E_k|\) is often small.  The sharper quantities are the dynamic widths
\[
        W_{k,0}=E_k,\qquad
        W_{k,s}=\AnnT_{k+s-1}(W_{k,s-1}),
\]
which measure how much of the square frontier remains after further annular evolution.  In particular, \(\liminf_k|\AnnT_k(E_k)|=1\) would prove connectedness.

The analytic part of the paper estimates the size of \(E_k\).  An elementary root-counting argument gives an unconditional first-moment bound with three logarithms.  For the stronger fixed-moment theory we use the shifted-square estimate \(\HST\), obtained below from the erratum-corrected upper-bound consequence of Henriot's discriminant-uniform Nair--Tenenbaum framework in the precise divisor-power form needed here \cite{NairTenenbaum,Henriot,HenriotErratum}.  The square-shift part is proved here separately from \((X-a)(X+a)\), excluding the zero \(X=a\) and including the short right-hand tail.  The sharper two-logarithm first moment additionally assumes the quadratic Euler-product mean-value input \(\HQE\), isolated explicitly below.  In that refinement, the first-divisor-moment case keeps the good-prime quadratic-character Euler factors through the summation over the deficits \(j\), where their harmonic mean is controlled by \Cref{lem:quadratic-euler-mean}.  The elementary \(K(\log K)^3\) first-moment theorem remains included as an independent unconditional proof that avoids both shifted-square moment technology and real-character Euler-product technology.

\begin{mainresults}
The following list separates the elementary parts from the estimates that use the shifted-square estimate \(\HST\), obtained in \Cref{subsec:analytic-inputs} from the corrected Henriot upper-bound input, and the additional, currently unproved, uniform quadratic Euler-product input \(\HQE\) in \Cref{subsec:analytic-inputs}.  The final square-gated statements are conditional dynamical criteria; no proof of the full Erd\H{o}s--Graham problem is claimed.
\begin{enumerate}[label=\textup{(\roman*)},leftmargin=*]
\item The visible component count satisfies
\[
        R(X)\le \log X+2\gamma+O(X^{-1/4}),
\]
with exponent-only refinements available from sharper divisor-problem estimates.
\item The square-annular transfer maps give an exact finite-level reformulation of the Erd\H{o}s--Graham problem.  The key identities are
\[
        \im(\AnnT_k)=E_{k+1},
        \qquad
        \Frontier_{k^2}=k^2+E_k.
\]
They imply static and dynamic frontier bounds, including \(R(k^2-1)\le |W_{k,s}|\).
\item The exit sets are thin pointwise but arithmetically unavoidable:
\[
        |E_k|\le k^{o(1)},
\]
they are residue-universal, and
\[
        \frac94K+O(1)\le \sum_{2\le k\le K}|E_k|\ll K(\log K)^3.
\]
Using \(\HST\), every fixed moment satisfies
\[
        \sum_{2\le k\le K}|E_k|^m\ll_m K(\log K)^{C_m}\qquad(m\ge2).
\]
Conditional on \(\HST+\HQE\), the first-moment upper bound improves to \(K(\log K)^2\).  Moreover, unconditionally, \(\sum_{2\le k\le K}(|E_k|-2)_+\ge K/4+O(1)\), so a density-one minimal-frontier assertion in this averaged form is impossible.
\item The transfer parity law splits the one-step frontier into two parity families.  This gives a precise obstruction to proving two-branch collapse by moment estimates alone: one needs dynamical same-parity coalescence, not merely small static frontiers.  The later square-gate criterion also exposes a Bunyakovsky-type obstruction to turning two branches into one by purely static divisor estimates.
\item Every orbit has arbitrarily large divisor jumps, and every non-coalescing lower-runner race has unbounded gaps, with explicit CRT block scales.
\end{enumerate}
\end{mainresults}

\begin{center}
\begin{minipage}{0.95\linewidth}
\small
\textbf{Dependency map for the main estimates.}  Here \(\HST\) denotes the shifted-square divisor estimate \Cref{thm:shifted-square-estimate}, obtained from the corrected Henriot upper-bound input in \Cref{prop:henriot-erratum-one-polynomial}, and \(\HQE\) denotes the quadratic Euler-product mean-value hypothesis \Cref{hyp:quadratic-euler-input}.  The square-shift estimates needed alongside the nonsquare Henriot specialization are proved in \Cref{prop:square-shift-estimates}.  Throughout, ``unconditional'' means that no new hypothesis such as \(\HQE\) is assumed; standard cited theorems, including Henriot's corrected Nair--Tenenbaum theorem and the classical divisor estimates, are used as external inputs.
\medskip

\begin{tabular}{@{}p{0.25\linewidth}p{0.68\linewidth}@{}}
\toprule
Method & Results \\
\midrule
Elementary/unconditional & Graph equivalence and adjacent-pair reduction; frontier and component-count estimates \(R(X)\le \log X+2\gamma+O(X^{-1/4})\); square-annular transfer and square-frontier identities; parity law; residue-universality and lower bounds for \(E_k\); elementary average bound \(\sum_{k\le K}|E_k|\ll K(\log K)^3\); CRT large-jump and lower-runner race theorems; interval-filling equivalence; square-gated criterion as a conditional dynamical criterion. \\
Uses corrected Henriot input via \(\HST\) & Shifted-square divisor moments, fixed moments \(\sum_{k\le K}|E_k|^m\ll_m K(\log K)^{C_m}\) for fixed \(m\ge2\), and the corresponding large-value and one-step-width tail bounds. \\
Uses \(\HST+\HQE\) & Conditional only: the two-logarithm first-moment bound \(\sum_{k\le K}|E_k|\ll K(\log K)^2\) and the analogous one-step-width first-moment bound.  These are not unconditional consequences of the present paper. \\
\bottomrule
\end{tabular}
\end{minipage}
\end{center}

The paper is organized as follows.  Sections \ref{sec:graph}--\ref{sec:frontier-count} prove the graph and frontier estimates.  Sections \ref{sec:annuli}--\ref{sec:E-average} develop the square-annular transfer framework and prove extremal, unconditional first-moment, and fixed-moment theorems for the exit sets, and isolate the exact parity obstruction to density-one two-branch collapse.  Sections \ref{sec:races} and \ref{sec:large-jumps} prove large-jump and race-divergence results.  Section \ref{sec:interval-filling} isolates interval filling, and Section \ref{sec:square-gates} records the square-gated two-branch criterion and the Bunyakovsky obstruction to the square-gate route.

\section{The coalescence graph}\label{sec:graph}

Define a relation \(\sim\) on \(\N\) by
\[
        a\sim b
        \quad\Longleftrightarrow\quad
        \exists i,j\ge0\text{ such that }\Tmap^i(a)=\Tmap^j(b).
\]
Let \(\Gamma\) be the undirected graph with vertex set \(\N\) and edge set
\[
        \bigl\{\{n,\Tmap(n)\}:n\in\N\bigr\}.
\]

\begin{lemma}\label{lem:equivalence}
The relation \(\sim\) is an equivalence relation, and its equivalence classes are exactly the connected components of \(\Gamma\).  In particular, the Erd\H{o}s--Graham coalescence problem is equivalent to connectedness of \(\Gamma\).
\end{lemma}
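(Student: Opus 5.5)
The plan is to prove the three assertions in the natural order: first that \(\sim\) is an equivalence relation, then that its classes are the components of \(\Gamma\), and finally that coalescence is the same as connectedness. The only property of \(\Tmap\) needed is that it is a function. Strict monotonicity of orbits (\(\Tmap(n)>n\), since \(\tauD(n)\ge1\)) is not even required for this lemma.

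Reflexivity holds with \(i=j=0\), and symmetry holds by swapping \(i\) and \(j\). Transitivity is the one step with content. Suppose \(\Tmap^i(a)=\Tmap^j(b)\) and \(\Tmap^{i'}(b)=\Tmap^{j'}(c)\). Applying the iterate \(\Tmap^{i'}\) to the first identity and \(\Tmap^{j}\) to the second gives
\[
\Tmap^{i+i'}(a)=\Tmap^{j+i'}(b)=\Tmap^{j+j'}(c).
\]
This uses only \(\Tmap^{p}\circ\Tmap^{q}=\Tmap^{p+q}\). Hence \(a\sim c\).

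For the components, I will show two inclusions.

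\emph{A class is contained in a component.} If \(a\sim b\) with \(\Tmap^i(a)=\Tmap^j(b)=c\), then the forward orbit segment \(a,\Tmap(a),\dots,\Tmap^i(a)=c\) is a walk in \(\Gamma\). Each consecutive pair \(\{n,\Tmap(n)\}\) is an edge, and a repeated vertex would only occur for a degenerate loop, which is harmless. Similarly \(b\) is joined to \(c\), so \(a\) and \(b\) lie in the same component.

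\emph{A component is contained in a class.} Every edge \(\{n,\Tmap(n)\}\) joins \(\sim\)-equivalent vertices, since \(\Tmap^1(n)=\Tmap^0(\Tmap(n))\). If \(a\) and \(b\) are joined by a path \(a=v_0,v_1,\dots,v_r=b\), then consecutive vertices satisfy \(v_{t}\sim v_{t+1}\) regardless of the direction of the edge, by symmetry. Transitivity, applied inductively along the path, then gives \(a\sim b\).

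The final sentence is then immediate. The Erd\H{o}s--Graham question asks whether \(m\sim n\) for all \(m,n\in\N\), that is, whether there is a single equivalence class. By the identification above, this says exactly that \(\Gamma\) has one connected component, i.e.\ that \(\Gamma\) is connected.

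No step is a real obstacle. The only care needed is in the component-to-class direction: an undirected path may traverse edges ``backwards'', from \(\Tmap(n)\) to \(n\). That case is handled by the symmetry of \(\sim\), not by following orbits.
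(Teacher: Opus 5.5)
Your proof is correct and follows essentially the same argument as the paper. Reflexivity and symmetry are immediate, every edge joins \(\sim\)-equivalent vertices, and two coalescing forward orbits together form an undirected path. Your transitivity step, which applies \(\Tmap^{i'}\) to the first identity and \(\Tmap^{j}\) to the second, is a slightly cleaner version of the paper's case split on \(j\le k\) versus \(k<j\).
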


\begin{proof}
Reflexivity and symmetry are clear.  If \(a\sim b\) and \(b\sim c\), choose \(i,j,k,\ell\ge0\) with
\[
        \Tmap^i(a)=\Tmap^j(b),\qquad \Tmap^k(b)=\Tmap^\ell(c).
\]
If \(j\le k\), applying \(\Tmap^{k-j}\) to the first equality gives
\(\Tmap^{i+k-j}(a)=\Tmap^k(b)=\Tmap^\ell(c)\).  If \(k<j\), applying \(\Tmap^{j-k}\) to the second equality gives \(\Tmap^i(a)=\Tmap^j(b)=\Tmap^{\ell+j-k}(c)\).  Thus \(\sim\) is transitive.

Each graph edge joins \(\sim\)-equivalent vertices, so every connected component is contained in a \(\sim\)-class.  Conversely, if \(\Tmap^i(a)=\Tmap^j(b)\), then the two forward paths from \(a\) and \(b\) to this common value form an undirected path from \(a\) to \(b\).  Hence the two notions coincide.
\end{proof}

\begin{lemma}[Adjacent-pair reduction]\label{lem:adjacent}
The graph \(\Gamma\) is connected if and only if \(n\sim n+1\) for every \(n\ge1\).
\end{lemma}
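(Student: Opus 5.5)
The plan is to prove both directions using only \Cref{lem:equivalence}, which identifies connectedness of \(\Gamma\) with \(\sim\) having a single class.

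For the forward direction, assume \(\Gamma\) is connected. Then any two vertices lie in the same connected component. By \Cref{lem:equivalence}, this means \(a\sim b\) for all \(a,b\in\N\). In particular \(n\sim n+1\) for every \(n\ge1\).

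For the converse, assume \(n\sim n+1\) for all \(n\ge1\). By \Cref{lem:equivalence}, \(\sim\) is an equivalence relation, so transitivity applies. An induction on \(m\) then gives \(1\sim m\) for every \(m\ge1\): the base case \(m=1\) is reflexivity, and the step combines \(1\sim m\) with \(m\sim m+1\).

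Hence \(\sim\) has exactly one class. By \Cref{lem:equivalence} again, \(\Gamma\) has exactly one connected component, i.e.\ it is connected. As an alternative, one can argue directly in the graph: each relation \(n\sim n+1\) yields an undirected path from \(n\) to \(n+1\), and concatenating these paths joins \(1\) to any \(m\).

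There is no real obstacle. The only point needing care is that transitivity of \(\sim\) is nontrivial, because the orbits must be synchronized. This was already handled in \Cref{lem:equivalence}, so it is enough to cite that lemma rather than reprove it.
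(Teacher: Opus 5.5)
Your proof is correct and follows essentially the same route as the paper: the forward direction is immediate, and the converse chains \(1\sim2\sim\cdots\sim n\) using the transitivity established in \Cref{lem:equivalence}. You are also right that the only substantive point, synchronizing the orbits to obtain transitivity, is already handled in that lemma.
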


\begin{proof}
If \(\Gamma\) is connected, this is immediate.  Conversely, if every adjacent pair is connected, then
\[
        1\sim2\sim3\sim\cdots\sim n
\]
for every \(n\), by transitivity.  Hence every vertex lies in the component of \(1\).
\end{proof}

\begin{remark}[No point barriers]
A direct one-point barrier cannot work for this map.  For \(N>2\), taking \(m=N-1\) gives
\[
        \Tmap(m)=N-1+\tauD(N-1)\ge N+1,
\]
so no \(N>2\) satisfies \(\Tmap(m)\le N\) for all \(m<N\).  The frontier method below is an interval-free substitute.
\end{remark}

\section{Crossing frontiers and component counts}\label{sec:frontier-count}

For \(N\ge2\), define the crossing endpoint set
\[
        \Frontier_N=\{\Tmap(m):m<N\le \Tmap(m)\}
\]
and the crossing-edge count
\[
        A(N)=\#\{m<N:m+\tauD(m)\ge N\}.
\]
Clearly \(|\Frontier_N|\le A(N)\), since different crossing starts can have the same endpoint.

\begin{lemma}[Frontier lemma]\label{lem:frontier}
Every component of \(\Gamma\) that meets \([1,N-1]\) also meets \(\Frontier_N\).  Consequently, if \(R(X)\) is the number of components of \(\Gamma\) meeting \([1,X]\), then for \(X<N\),
\[
        R(X)\le |\Frontier_N|\le A(N).
\]
\end{lemma}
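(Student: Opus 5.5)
The plan is to use the fact that every forward orbit is strictly increasing and unbounded, since \(\tauD(n)\ge1\) gives \(\Tmap(n)\ge n+1\).

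Let \(C\) be a component of \(\Gamma\) containing some \(a\le N-1\). Consider the forward orbit \(a,\Tmap(a),\Tmap^2(a),\dots\). It starts below \(N\) and tends to infinity, so there is a least \(i\ge1\) with \(\Tmap^i(a)\ge N\). Set \(m=\Tmap^{i-1}(a)\). By minimality \(m<N\), and \(\Tmap(m)=\Tmap^i(a)\ge N\). Hence \(\Tmap(m)\in\Frontier_N\) by definition. Since \(\Tmap(m)\) is joined to \(a\) by the forward path, it lies in \(C\) (equivalently \(a\sim\Tmap(m)\) by \Cref{lem:equivalence}). So \(C\) meets \(\Frontier_N\).

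For the counting statement, fix \(X<N\). Each component meeting \([1,X]\) also meets \([1,N-1]\), so it contains at least one element of \(\Frontier_N\). Components are disjoint, so choosing one such element for each component gives an injection into \(\Frontier_N\). This yields \(R(X)\le|\Frontier_N|\).

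The second inequality \(|\Frontier_N|\le A(N)\) holds because \(\Frontier_N\) is the image of the set counted by \(A(N)\) under \(m\mapsto\Tmap(m)\).

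There is no real obstacle here. The only point needing care is that the crossing index \(i\) exists and is at least \(1\). This follows from the strict increase of the orbit together with \(a<N\).
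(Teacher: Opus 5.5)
Your proposal is correct and follows essentially the same argument as the paper: you take the first exit index of a strictly increasing forward orbit from below \(N\), use disjointness of components to get an injection into \(\Frontier_N\), and note that \(\Frontier_N\) is the image under \(\Tmap\) of the starts counted by \(A(N)\). The paper's proof (together with the remark preceding the lemma) proceeds in exactly this way.
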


\begin{proof}
Let \(\mathcal C\) be a component meeting \([1,N-1]\), and choose \(x\in\mathcal C\) with \(x<N\).  The forward orbit of \(x\) is strictly increasing, because \(\tauD(n)\ge1\).  Let \(t\ge1\) be the first index for which \(\Tmap^t(x)\ge N\).  Then \(m=\Tmap^{t-1}(x)<N\le \Tmap(m)\), so \(\Tmap(m)\in\Frontier_N\), and this point lies in the same component as \(x\).  Distinct components are disjoint, giving the displayed inequality.
\end{proof}

Let \(\mathcal C(1)\) be the component containing \(1\).

\begin{corollary}[Frontier certificate]\label{cor:frontier-cert}
If \(\Frontier_N\subseteq\mathcal C(1)\), then \([1,N-1]\subseteq\mathcal C(1)\).  Therefore the coalescence problem is equivalent to the existence of arbitrarily large \(N\) for which
\[
        \Frontier_N\subseteq\mathcal C(1).
\]
\end{corollary}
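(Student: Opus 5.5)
The first assertion follows directly from the Frontier lemma (\Cref{lem:frontier}); the equivalence then combines it with \Cref{lem:equivalence}.

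For the first claim, suppose \(\Frontier_N\subseteq\mathcal C(1)\) and take any \(x\in[1,N-1]\). Let \(\mathcal C\) be the component of \(x\). It meets \([1,N-1]\), so by \Cref{lem:frontier} it contains some point \(y\in\Frontier_N\). By hypothesis \(y\in\mathcal C(1)\). Distinct components are disjoint and \(y\) lies in both \(\mathcal C\) and \(\mathcal C(1)\), so \(\mathcal C=\mathcal C(1)\). Hence \(x\in\mathcal C(1)\), which gives \([1,N-1]\subseteq\mathcal C(1)\).

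For the equivalence, suppose first that the condition \(\Frontier_N\subseteq\mathcal C(1)\) holds for arbitrarily large \(N\). Given any \(n\in\N\), choose such an \(N>n\). The first part gives \(n\in\mathcal C(1)\). Since \(n\) was arbitrary, \(\Gamma\) has the single component \(\mathcal C(1)\). By \Cref{lem:equivalence}, connectedness of \(\Gamma\) is exactly the coalescence property: every pair \(m,n\) has a common iterate.

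Conversely, suppose coalescence holds. Then \(\Gamma\) is connected by \Cref{lem:equivalence}, so \(\mathcal C(1)=\N\). Thus \(\Frontier_N\subseteq\mathcal C(1)\) holds trivially for every \(N\ge2\), and in particular for arbitrarily large \(N\).

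There is no real obstacle here. The only point needing care is that \(\Frontier_N\) is nonempty, so that the frontier point \(y\) exists; this is already part of the content of \Cref{lem:frontier}, which uses strict monotonicity of orbits.
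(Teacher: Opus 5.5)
Your proposal is correct and matches the paper's proof: the first claim is a direct application of \Cref{lem:frontier}, and the equivalence follows by choosing \(N\) beyond any given integer, with the converse trivial. Your appeal to \Cref{lem:equivalence} to identify connectedness of \(\Gamma\) with coalescence simply spells out a step the paper leaves implicit.
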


\begin{proof}
The first statement follows immediately from Lemma \ref{lem:frontier}.  If such \(N\) occur arbitrarily far out, then every fixed positive integer is below one of them and so lies in \(\mathcal C(1)\).  The converse is trivial.
\end{proof}

Let
\[
        S(x)=\sum_{n\le x}\tauD(n),\qquad
        \DeltaD(x)=S(x)-x\log x-(2\gamma-1)x.
\]
We first use only the classical Dirichlet divisor estimate
\begin{equation}\label{eq:delta-classical}
        \DeltaD(x)=O(x^{1/2}).
\end{equation}
For optional refinements, call \((\theta,a)\), with \(0<\theta<1\) and \(a\ge0\), an \emph{admissible pointwise divisor pair} if
\begin{equation}\label{eq:delta-pair}
        \DeltaD(x)=O\bigl(x^\theta(\log x)^a\bigr)
        \qquad (x\to\infty).
\end{equation}
Call \(\theta\) admissible in the exponent-only sense if \(\DeltaD(x)=O_\varepsilon(x^{\theta+\varepsilon})\) for every \(\varepsilon>0\).  Huxley's published work gives the exponent-only admissible value \(\theta=131/416\) for the Dirichlet divisor problem \cite{Huxley}.  We also use the standard maximal-order consequence of Wigert's theorem \cite{Wigert},
\begin{equation}\label{eq:max-x-o}
        \max_{n\le x}\tauD(n)=x^{o(1)}.
\end{equation}

\begin{lemma}[Short average of crossings]\label{lem:average-crossings}
Let \(X\ge2\) and \(H\ge1\) be integers, and set
\[
        D(X)=\max_{1\le m<X}\tauD(m).
\]
Then
\[
        \sum_{N=X+1}^{X+H}A(N)
        \le \sum_{m=X}^{X+H-1}\tauD(m)+D(X)^2.
\]
\end{lemma}

\begin{proof}
For a fixed \(m<X+H\), count the integers \(N\) such that
\[
        X<N\le X+H,
        \qquad
        m<N\le m+\tauD(m).
\]
If \(X\le m<X+H\), the number of such \(N\) is at most \(\tauD(m)\).  If \(m<X\), a contribution is possible only when \(m+\tauD(m)>X\).  Since \(\tauD(m)\le D(X)\), such \(m\) must satisfy \(m>X-D(X)\).  There are fewer than \(D(X)\) such starts, and each contributes at most \(D(X)\) levels.  Their total contribution is at most \(D(X)^2\).
\end{proof}

\begin{theorem}[Component-count bound]\label{thm:component-count}
Unconditionally,
\begin{equation}\label{eq:R-classical}
        R(X)\le \log X+2\gamma+O(X^{-1/4}).
\end{equation}
More generally, if \((\theta,a)\) is an admissible pointwise divisor pair, then
\begin{equation}\label{eq:R-pair}
        R(X)\le \log X+2\gamma+O\!\left(
        X^{-(1-\theta)/2}(\log X)^{a/2}\right).
\end{equation}
If \(\theta\) is admissible in the exponent-only sense, then for every \(\varepsilon>0\),
\begin{equation}\label{eq:R-exponent-only}
        R(X)\le \log X+2\gamma+O_\varepsilon\!\left(
        X^{-(1-\theta)/2+\varepsilon}\right).
\end{equation}
\end{theorem}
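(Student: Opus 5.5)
The plan is to average the frontier bound over a short window of levels and then optimise the window length. For \(X\ge2\) and an integer \(H\ge1\), \Cref{lem:frontier} gives \(R(X)\le A(N)\) for every \(N\) with \(X<N\le X+H\). Averaging over these \(H\) values of \(N\) and applying \Cref{lem:average-crossings} yields
\[
        R(X)\le \frac1H\sum_{N=X+1}^{X+H}A(N)\le \frac1H\sum_{m=X}^{X+H-1}\tauD(m)+\frac{D(X)^2}{H}.
\]
By \eqref{eq:max-x-o} we have \(D(X)^2=X^{o(1)}\), so the last term is \(X^{o(1)}/H\).

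The short divisor sum is \(S(X+H-1)-S(X-1)\). We write it through the main term \(M(x)=x\log x+(2\gamma-1)x\) together with \(\DeltaD\). Since \(M'(x)=\log x+2\gamma\) and \(M''(x)=1/x\), the mean value theorem gives
\[
        \frac{M(X+H-1)-M(X-1)}{H}=\log X+2\gamma+O(H/X).
\]
The error terms contribute \(O(\max_{x\le 2X}|\DeltaD(x)|/H)\), which is \(O(X^{1/2}/H)\) under \eqref{eq:delta-classical}, or \(O(X^\theta(\log X)^a/H)\) under \eqref{eq:delta-pair}. Collecting terms,
\[
        R(X)\le\log X+2\gamma+O\!\left(\frac HX+\frac{X^\theta(\log X)^a+X^{o(1)}}{H}\right),
\]
and we take \(H\le X\) throughout.

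Next we balance the two error terms by choosing \(H=\lceil X^{(1+\theta)/2}(\log X)^{a/2}\rceil\). This makes both \(H/X\) and \(X^\theta(\log X)^a/H\) of order \(X^{-(1-\theta)/2}(\log X)^{a/2}\). The term \(X^{o(1)}/H\) is then negligible, because \(H\ge X^{1/2}\). The classical case \(\theta=1/2\), \(a=0\) gives \(H\asymp X^{3/4}\) and hence \eqref{eq:R-classical}. The general choice gives \eqref{eq:R-pair}. For the exponent-only version, we apply the same argument with \(\theta+\varepsilon/2\) in place of \(\theta\) and \(a=0\), choosing \(H\asymp X^{(1+\theta)/2+\varepsilon/4}\), which gives \eqref{eq:R-exponent-only}.

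The only delicate step is making sure the \(D(X)^2/H\) term, the boundary terms from using \(X-1\) rather than \(X\), and the rounding of \(H\) are all absorbed into the stated error. Each of these costs at most \(X^{o(1)}/H\) or \(O(\log X/H)\). Since \(H\) is a positive power of \(X\), all of them are \(O(X^{-1/2+o(1)})\), which is dominated by \(X^{-(1-\theta)/2}\) because \(\theta>0\).
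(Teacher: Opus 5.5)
Your proposal is correct and follows the paper's proof essentially step for step. Like the paper, you average \Cref{lem:frontier} over $N\in[X+1,X+H]$, apply \Cref{lem:average-crossings}, absorb $D(X)^2/H$ using the maximal-order bound, and balance $H/X$ against $X^\theta(\log X)^a/H$ with $H\asymp X^{(1+\theta)/2}(\log X)^{a/2}$, then pass to $\theta+\varepsilon/2$ for the exponent-only form; using the mean value theorem in place of the paper's Taylor expansion for the main term changes nothing of substance.
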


\begin{proof}
We prove the pointwise-pair estimate for \(X\) sufficiently large; the unconditional estimate is the special case \((\theta,a)=(1/2,0)\).  Let
\[
        H=\left\lfloor X^{(1+\theta)/2}(\log X)^{a/2}\right\rfloor,
\]
with \(H=\lfloor X^{(1+\theta)/2}\rfloor\) when \(a=0\).  Lemma \ref{lem:frontier} gives \(R(X)\le A(N)\) for every integer \(N\in[X+1,X+H]\), hence
\[
        R(X)\le \frac{1}{H}\sum_{N=X+1}^{X+H}A(N).
\]
By Lemma \ref{lem:average-crossings},
\[
        R(X)
        \le \frac{S(X+H-1)-S(X-1)}{H}+\frac{D(X)^2}{H}.
\]
The maximal-order bound \eqref{eq:max-x-o} gives
\[
        \frac{D(X)^2}{H}=X^{-(1+\theta)/2+o(1)}(\log X)^{-a/2},
\]
which is smaller than the claimed error.  For \(f(x)=x\log x+(2\gamma-1)x\), uniformly for this choice of \(H=o(X)\),
\[
        f(X+H-1)-f(X-1)=H(\log X+2\gamma)+O(H^2/X)+O(\log X).
\]
After division by \(H\), the \(O(\log X)\) term gives \(O(\log X/H)\), and the
floor in the definition of \(H\) changes the estimates only by the same smaller-order
quantity; both are absorbed by the displayed error term below.  The Taylor error contributes
\[
        O(H/X)=O\!\left(X^{-(1-\theta)/2}(\log X)^{a/2}\right),
\]
and the divisor-problem error contributes
\[
        O\!\left(\frac{X^\theta(\log X)^a}{H}\right)
        =O\!\left(X^{-(1-\theta)/2}(\log X)^{a/2}\right).
\]
This proves \eqref{eq:R-pair}, and hence \eqref{eq:R-classical}.  The exponent-only form follows by applying the pointwise-pair estimate with \(\theta+\varepsilon\) and absorbing harmless changes in \(\varepsilon\).
\end{proof}

\begin{remark}[Sharper divisor-problem inputs]\label{rem:divisor-specializations}
Huxley's published exponent \(131/416\) gives
\[
        R(X)\le \log X+2\gamma+O_\varepsilon\!\left(
        X^{-285/832+\varepsilon}\right),
\]
since \((1-131/416)/2=285/832\).  We do not record consequences of unpublished preprint exponents, since they do not affect any result of the paper.  This refinement improves only the vanishing error term.  The qualitative conclusion \(R(X)\le\log X+2\gamma+o(1)\) already follows from the classical estimate.
\end{remark}

\begin{corollary}\label{cor:R-limsup}
Unconditionally,
\[
        \limsup_{X\to\infty}\bigl(R(X)-\log X\bigr)\le 2\gamma .
\]
\end{corollary}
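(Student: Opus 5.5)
This corollary follows directly from \Cref{thm:component-count}, specifically the unconditional estimate \eqref{eq:R-classical}. The plan is to subtract \(\log X\) from both sides of that inequality and then pass to the limit superior.

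First, fix the implied constant. By \eqref{eq:R-classical} there exist \(C>0\) and \(X_0\) such that for all real \(X\ge X_0\),
\[
        R(X)-\log X\le 2\gamma + C X^{-1/4}.
\]
If needed, one should confirm that the theorem is valid for real \(X\), not just integers. Since \(R(X)\) depends only on \(\lfloor X\rfloor\) and \(\log X\ge\log\lfloor X\rfloor\), the integer case implies the real case, because
\[
        R(X)-\log X\le R(\lfloor X\rfloor)-\log\lfloor X\rfloor.
\]

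Next, take \(\limsup\) as \(X\to\infty\). Because \(CX^{-1/4}\to0\), the right-hand side tends to \(2\gamma\). Monotonicity of \(\limsup\) under pointwise inequalities then gives the stated bound.

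No step here is a real obstacle, since all the analytic content lies in \Cref{thm:component-count}. The only point requiring care is that its \(O\)-term is uniform for large \(X\), so a single constant \(C\) works for all \(X\ge X_0\). This holds because that theorem is stated as an asymptotic bound valid for all sufficiently large \(X\).
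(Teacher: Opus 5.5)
Your proposal is correct and matches the paper's argument: the paper calls the corollary immediate from the unconditional bound \eqref{eq:R-classical}, which is exactly your subtract-\(\log X\)-and-take-\(\limsup\) step. Your reduction from real \(X\) to \(\lfloor X\rfloor\), using \(R(X)=R(\lfloor X\rfloor)\) and \(\log X\ge\log\lfloor X\rfloor\), is valid bookkeeping that the paper leaves implicit.
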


\begin{proof}
This is immediate from \eqref{eq:R-classical}.
\end{proof}

\begin{corollary}\label{cor:root-sparse}
Suppose \(\Gamma\) has infinitely many connected components, and let
\[
        r_1<r_2<r_3<\cdots
\]
be their least elements.  Then
\[
        r_j\ge \exp(j-2\gamma-o(1))=e^{-2\gamma+o(1)}e^j=(e^{-2\gamma}+o(1))e^j.
\]
Equivalently, the least representatives of distinct components are exponentially sparse.
\end{corollary}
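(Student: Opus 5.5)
The plan is to combine the component-count bound \eqref{eq:R-classical} with the observation that the least elements of components are visible below themselves.

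Fix \(j\ge1\) and set \(X=r_j\). The components with least elements \(r_1,\dots,r_j\) each meet \([1,r_j]\), because \(r_i\le r_j\) for \(i\le j\). They are distinct components, so \(R(r_j)\ge j\). Theorem \ref{thm:component-count} then gives
\[
        j\le R(r_j)\le \log r_j+2\gamma+O(r_j^{-1/4}).
\]

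Next we need \(r_j\to\infty\). This holds because the \(r_j\) are strictly increasing positive integers, so \(r_j\ge j\). Hence the error term \(O(r_j^{-1/4})\) is \(o(1)\) as \(j\to\infty\). Rearranging gives \(\log r_j\ge j-2\gamma-o(1)\), and exponentiating gives
\[
        r_j\ge \exp(j-2\gamma-o(1)).
\]

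The remaining equalities in the statement are just rewriting. First, \(e^{-o(1)}=e^{o(1)}\) in the usual sense, since \(-o(1)\) is again \(o(1)\). Second, \(e^{o(1)}=1+o(1)\), so
\[
        e^{-2\gamma}e^{o(1)}=e^{-2\gamma}+o(1),
\]
the constant \(e^{-2\gamma}\) being absorbed into the \(o(1)\).

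There is no real obstacle here. The one point needing care is that the \(o(1)\) is taken as \(j\to\infty\), and this is legitimate because \(r_j\ge j\). The hypothesis of infinitely many components is used only to make the sequence \(r_j\) infinite. Exponential sparseness is the direct reading of the bound \(r_j\gg e^j\).
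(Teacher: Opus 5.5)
Your proof is correct and matches the paper's argument: $R(r_j)\ge j$ combined with Theorem~\ref{thm:component-count} gives $j\le\log r_j+2\gamma+o(1)$, which rearranges to the claim. Your explicit observation that $r_j\ge j$, so that $r_j\to\infty$ and the $O(r_j^{-1/4})$ error is genuinely $o(1)$ as $j\to\infty$, is a detail the paper leaves implicit.
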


\begin{proof}
Since \(R(r_j)\ge j\), Theorem \ref{thm:component-count} gives \(j\le\log r_j+2\gamma+o(1)\), which is equivalent to the claim.
\end{proof}

\section{Square annuli and transfer maps}\label{sec:annuli}

For \(k\ge1\), let
\[
        I_k=[k^2,(k+1)^2)\cap\N.
\]
Thus \(|I_k|=2k+1\), and the offsets in \(I_k\) are \(0,1,\ldots,2k\).

\begin{lemma}[No square-annulus skipping]\label{lem:no-skip}
For every \(n\ge1\),
\[
        \Tmap(n)<(\sqrt n+1)^2.
\]
Consequently, if \(n\in I_k\), then \(\Tmap(n)\in I_k\cup I_{k+1}\).
\end{lemma}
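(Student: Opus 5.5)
The plan is to derive the displayed inequality from the elementary bound \(\tauD(n)<2\sqrt n\) and then read off the annulus statement. The bound \(\tauD(n)<2\sqrt n\) follows by pairing each divisor \(d\) of \(n\) with its complementary divisor \(n/d\). Every divisor pair \(\{d,n/d\}\) contains exactly one element \(d\le\sqrt n\), with the two elements coinciding exactly when \(d=\sqrt n\). Counting divisors \(d\le\sqrt n\), we get \(\tauD(n)\le 2\lfloor\sqrt n\rfloor\) when \(n\) is not a square, and \(\tauD(n)\le 2\sqrt n-1\) when \(n\) is a square. In both cases \(\tauD(n)\le 2\sqrt n\).

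Next we make the inequality strict. If \(n\) is a square, we already have \(\tauD(n)\le 2\sqrt n-1<2\sqrt n\). If \(n\) is not a square, then \(\sqrt n\) is irrational, so \(\lfloor\sqrt n\rfloor<\sqrt n\), which again gives strict inequality. With \(\tauD(n)<2\sqrt n\) in hand,
\[
        \Tmap(n)=n+\tauD(n)<n+2\sqrt n<n+2\sqrt n+1=(\sqrt n+1)^2,
\]
which is the displayed inequality.

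For the annulus statement, take \(n\in I_k\), so that \(k^2\le n<(k+1)^2\) and hence \(\sqrt n<k+1\). Then
\[
        \Tmap(n)<(\sqrt n+1)^2<(k+2)^2 .
\]
Also \(\Tmap(n)>n\ge k^2\). Therefore \(\Tmap(n)\in[k^2,(k+2)^2)\cap\N=I_k\cup I_{k+1}\).

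There is no serious obstacle in this argument. The only point requiring care is the strictness of \(\tauD(n)<2\sqrt n\) in the square case, where the divisor \(\sqrt n\) would otherwise be double-counted; the pairing argument handles it as shown above. Strictness is in any case not essential for the annulus conclusion, since the second step above already has a strict inequality.
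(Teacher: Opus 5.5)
Your proof is correct and follows essentially the same route as the paper: divisor pairing gives \(\tauD(n)<2\sqrt n\), hence \(\Tmap(n)<n+2\sqrt n<(\sqrt n+1)^2\), and \(\sqrt n<k+1\) then bounds \(\Tmap(n)\) below \((k+2)^2\). You also spell out the lower bound \(\Tmap(n)>n\ge k^2\), and you note that the non-strict bound \(\tauD(n)\le 2\sqrt n\) would already suffice; the paper leaves both points implicit.
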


\begin{proof}
Pair each divisor \(d<\sqrt n\) with \(n/d>\sqrt n\).  If \(n\) is not a square, this gives \(\tauD(n)<2\sqrt n\); if \(n\) is a square, the middle divisor \(\sqrt n\) is counted once, and again \(\tauD(n)<2\sqrt n\).  Therefore
\[
        \Tmap(n)=n+\tauD(n)<n+2\sqrt n<(\sqrt n+1)^2.
\]
If \(n<(k+1)^2\), then \((\sqrt n+1)^2<(k+2)^2\), so the next iterate cannot reach \(I_{k+2}\).
\end{proof}

Because \(\Tmap(n)>n\), every orbit is strictly increasing and enters every later square annulus.  This permits a finite first-entry map.

\begin{definition}[Annular transfer map]
For \(k\ge1\) and \(0\le r\le2k\), let \(\AnnT_k(r)\) be the offset with which the orbit starting at \(k^2+r\) first enters \(I_{k+1}\).  Equivalently, if \(s\ge1\) is minimal with \(\Tmap^s(k^2+r)\ge(k+1)^2\), then
\[
        \AnnT_k(r)=\Tmap^s(k^2+r)-(k+1)^2.
\]
By Lemma \ref{lem:no-skip},
\[
        \AnnT_k:\{0,1,\ldots,2k\}\longrightarrow\{0,1,\ldots,2k+2\}.
\]
\end{definition}

\begin{proposition}[Backward recursion for \(\AnnT_k\)]\label{prop:backward-recursion}
Let \(L_k=2k+1\).  For \(0\le r\le 2k\), put
\[
        u(r)=r+\tauD(k^2+r).
\]
Then
\[
        \AnnT_k(r)=
        \begin{cases}
        u(r)-L_k, & u(r)\ge L_k,\\
        \AnnT_k(u(r)), & u(r)<L_k.
        \end{cases}
\]
Since \(u(r)>r\), this determines all values of \(\AnnT_k\) by descending induction on \(r\).
\end{proposition}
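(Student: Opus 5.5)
The plan is to unwind the definition of \(\AnnT_k\) by one step of the orbit. Fix \(0\le r\le 2k\) and set \(n=k^2+r\), so that \(\Tmap(n)=k^2+r+\tauD(k^2+r)=k^2+u(r)\). Since \((k+1)^2=k^2+L_k\), the first iterate \(\Tmap(n)\) has already left \(I_k\) exactly when \(u(r)\ge L_k\). By Lemma \ref{lem:no-skip} it then lies in \(I_{k+1}\). So in the first case the minimal \(s\) in the definition of \(\AnnT_k\) is \(s=1\), and
\[
        \AnnT_k(r)=\Tmap(n)-(k+1)^2=u(r)-L_k,
\]
which lies in \(\{0,\ldots,2k+2\}\) by Lemma \ref{lem:no-skip}.

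In the second case \(u(r)<L_k\), so \(\Tmap(n)=k^2+u(r)\) lies in \(I_k\) with offset \(u(r)\), and \(u(r)\le 2k\). The minimal \(s\ge1\) with \(\Tmap^s(n)\ge(k+1)^2\) is then at least \(2\). Writing \(s=1+s'\), the index \(s'\ge1\) is exactly the minimal index with \(\Tmap^{s'}(k^2+u(r))\ge(k+1)^2\). Hence the first entries into \(I_{k+1}\) of the orbits of \(n\) and of \(k^2+u(r)\) are the same point, and \(\AnnT_k(r)=\AnnT_k(u(r))\). The only point needing care is this re-indexing of the minimal \(s\): we need \(s'\ge1\) to match the requirement \(s\ge1\) in the definition. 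That holds because \(s\ge2\) is forced, since \(\Tmap(n)<(k+1)^2\).

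For the final sentence, note that \(\tauD\ge1\) gives \(u(r)>r\). Descending induction on \(r\) from \(2k\) down to \(0\) then works as follows. For \(r=2k\) we have \(u(2k)\ge 2k+1=L_k\), so the first case applies. For smaller \(r\), either the first case gives \(\AnnT_k(r)\) explicitly, or the second case refers to \(\AnnT_k(u(r))\) with \(r<u(r)\le 2k\), which is already determined. I expect no genuine obstacle here; the argument is a direct unfolding of the definition.
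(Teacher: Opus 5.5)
Your proof is correct and is the same one-step unfolding of the definition that the paper uses. Your explicit re-indexing \(s=1+s'\) (with \(s\ge2\) forced because \(\Tmap(k^2+r)<(k+1)^2\)) and the base case \(r=2k\) just spell out details the paper leaves implicit.
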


\begin{proof}
If \(u(r)\ge L_k\), the first step from \(k^2+r\) reaches \((k+1)^2+(u(r)-L_k)\), so the stated overshoot is the first-entry offset.  If \(u(r)<L_k\), the first step remains in the same annulus and lands at offset \(u(r)\); the eventual first-entry offset is therefore \(\AnnT_k(u(r))\).  The inequality \(u(r)>r\) follows from \(\tauD(k^2+r)\ge1\), so descending induction is valid.
\end{proof}

For a set \(A\) of offsets, write \(\AnnT_k(A)=\{\AnnT_k(r):r\in A\}\).  For \(K\ge1\), define
\[
        S_K(0)=\{0,1,\ldots,2K\},
        \qquad
        S_K(t+1)=\AnnT_{K+t}(S_K(t)).
\]
Thus \(S_K(t)\) is the set of first-entry offsets in \(I_{K+t}\) arising from all starting values in \(I_K\).

\begin{theorem}[Annular synchronization equivalence]\label{thm:annular-equiv}
The Erd\H{o}s--Graham coalescence problem for \(\Tmap(n)=n+\tauD(n)\) is equivalent to the following finite-level synchronization assertion:
\[
        \forall K\ge1\ \exists t\ge0\quad |S_K(t)|=1.
\]
\end{theorem}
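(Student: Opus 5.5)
We prove the two implications separately. The main tool is a description of \(S_K(t)\) as a set of actual orbit points: \(S_K(t)\) is exactly the set of offsets \(y-(K+t)^2\), where \(y\) ranges over first-entry points into \(I_{K+t}\) of orbits started in \(I_K\).

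\textbf{Step 1: orbit description of \(S_K(t)\).} For \(n\in I_K\) and \(t\ge0\), let \(e_t(n)\) be the first point of the forward orbit of \(n\) lying in \(I_{K+t}\), with \(e_0(n)=n\). This point exists because the orbit is strictly increasing and, by Lemma \ref{lem:no-skip}, cannot skip an annulus. We show by induction on \(t\) that
\[
S_K(t)=\{e_t(n)-(K+t)^2:n\in I_K\}.
\]
For the inductive step, the orbit of \(e_t(n)\) first enters \(I_{K+t+1}\) at the same point as the orbit of \(n\). This is because every orbit point strictly between \(e_t(n)\) and that entry lies in \(I_{K+t}\). Hence the definition of \(\AnnT_{K+t}\) gives
\[
e_{t+1}(n)-(K+t+1)^2=\AnnT_{K+t}\bigl(e_t(n)-(K+t)^2\bigr).
\]

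\textbf{Step 2: synchronization implies coalescence.} Fix \(K\) and choose \(t\) with \(|S_K(t)|=1\). By Step 1, \(e_t(n)\) is the same integer for all \(n\in I_K\). Since each \(e_t(n)\) is an iterate of \(n\), all elements of \(I_K\) are \(\sim\)-equivalent. The annuli \(I_K\) and \(I_{K+1}\) share no point, but the last element \((K+1)^2-1\) of \(I_K\) is joined through \(\Tmap\) to a point of \(I_{K+1}\). Alternatively, every \(n\in I_K\) has a forward iterate in \(I_{K+1}\), namely \(e_1(n)\). In either case, applying the hypothesis for all \(K\) chains the classes together: all of \(\bigcup_K I_K=\N\) lies in a single class. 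By Lemma \ref{lem:equivalence}, \(\Gamma\) is connected.

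\textbf{Step 3: coalescence implies synchronization.} Assume \(\Gamma\) is connected and fix \(K\). \(I_K\) is finite, so pick a base point \(n_0\in I_K\). For each \(n\in I_K\), Lemma \ref{lem:equivalence} gives \(i,j\) with \(\Tmap^i(n)=\Tmap^j(n_0)\). Then both orbits agree from some point \(M_n\) onward. Let \(M=\max_n M_n\); after this point all orbits from \(I_K\) have merged into the orbit of \(n_0\). Choose \(t\) with \((K+t)^2>M\). Each \(e_t(n)\) is the first orbit point in \(I_{K+t}\). All orbits coincide above \(M\), and \(I_{K+t}\) lies entirely above \(M\), so the first point in \(I_{K+t}\) is the same for every \(n\). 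Hence \(|S_K(t)|=1\) by Step 1.

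\textbf{Main obstacle.} The step needing care is the precise claim in Step 3 that merging forces a common first entry. If \(\Tmap^i(n)=\Tmap^j(n_0)=z\), then the two orbits are identical from \(z\) on. Every point of \(I_{K+t}\) exceeds \(M\ge z\), so for each orbit the first entry into \(I_{K+t}\) equals the first point of the common tail lying in \(I_{K+t}\). That point is therefore independent of \(n\). Everything else is bookkeeping via Lemma \ref{lem:no-skip}.
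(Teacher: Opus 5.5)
Your proof is correct and is essentially the paper's argument. The converse direction is the same: choose a square annulus beyond all the meeting points, so all first entries coincide. In the forward direction you chain classes through an arbitrary base point and consecutive annuli, where the paper uses the orbit of \(1\) as a common reference (its first-entry point into \(I_K\) is one of the starts); your Step 1 just makes explicit the orbit description of \(S_K(t)\) that the paper states informally right after defining it.
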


\begin{proof}
Let \(b_k\) denote the first-entry offset of the orbit of \(1\) into \(I_k\).  Since every orbit enters every annulus, \(b_k\) is defined for all \(k\ge1\), and \(b_{k+1}=\AnnT_k(b_k)\).

If \(|S_K(t)|=1\), then all starts in \(I_K\) have the same first-entry value in \(I_{K+t}\).  The start in \(I_K\) equal to the first-entry point of the orbit of \(1\) into \(I_K\) is among these starts, so the singleton is necessarily \(\{b_{K+t}\}\).  Hence every start in \(I_K\) meets the orbit of \(1\).  If this holds for every \(K\), then all positive integers lie in the component of \(1\).

Conversely, suppose all positive integers meet the orbit of \(1\).  For a fixed \(K\), the finite set \(I_K\) consists of \(2K+1\) starts, each of which meets the orbit of \(1\) after finitely many steps.  Choose a square annulus beyond all of these meeting points.  At that level all first-entry offsets agree with the main offset, so \(|S_K(t)|=1\) for the corresponding \(t\).
\end{proof}

\section{Exit sets, square frontiers, and dynamic widths}\label{sec:exit-sets}

For \(k\ge2\), define the exit set into \(I_k\) by
\[
        E_k=\{\tauD(k^2-j)-j:
        1\le j\le2k-2,\ \tauD(k^2-j)\ge j\}.
\]
It is convenient to put \(E_1=\{0\}\), corresponding to the fact that the orbit of \(1\) begins at the first point of \(I_1\).  The omitted endpoint \(j=2k-1\) would be the square \((k-1)^2\), and it is never active because \(\tauD((k-1)^2)<2(k-1)<2k-1\).  Thus the definition for \(k\ge2\) records exactly all possible overshoots when crossing the square \(k^2\) from below.

\begin{proposition}[Exact exit-set identity]\label{prop:image-E}
For every \(k\ge1\),
\[
        \im(\AnnT_k)=E_{k+1}.
\]
\end{proposition}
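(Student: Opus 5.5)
We prove the two inclusions \(\im(\AnnT_k)\subseteq E_{k+1}\) and \(E_{k+1}\subseteq\im(\AnnT_k)\) directly from the definitions. The whole argument reduces to one reindexing: a point \(m\in I_k\) written with offset \(r=m-k^2\) is the same as \(m=(k+1)^2-j\) with deficit \(j=L_k-r\), where \(L_k=2k+1\). As \(r\) runs over \(0,\dots,2k\), the deficit \(j\) runs over \(1,\dots,2k+1\). This matches the index range \(1\le j\le 2(k+1)-2=2k\) in the definition of \(E_{k+1}\), except for the extra endpoint \(j=2k+1\), i.e.\ \(m=k^2\), which will be handled separately.

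\emph{Inclusion \(\im(\AnnT_k)\subseteq E_{k+1}\).} Fix \(r\) and let \(s\ge1\) be minimal with \(\Tmap^s(k^2+r)\ge(k+1)^2\). Put \(m=\Tmap^{s-1}(k^2+r)\). Then \(m<(k+1)^2\), and since orbits increase, \(m\ge k^2\), so \(m\in I_k\). Write \(m=(k+1)^2-j\) with \(1\le j\le 2k+1\). The crossing condition \(\Tmap(m)\ge(k+1)^2\) says exactly \(\tauD(m)\ge j\), and then
\[
\AnnT_k(r)=\Tmap(m)-(k+1)^2=\tauD\bigl((k+1)^2-j\bigr)-j .
\]
It remains to rule out \(j=2k+1\), that is \(m=k^2\). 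This is the remark after the definition of \(E_k\): \(\tauD(k^2)<2k<2k+1\), by the bound \(\tauD(n)<2\sqrt n\) from the proof of Lemma \ref{lem:no-skip}. Hence \(1\le j\le 2k\), and \(\AnnT_k(r)\in E_{k+1}\). For \(k\ge1\), the set \(E_{k+1}\) is defined by the general formula, since \(k+1\ge2\), so the convention \(E_1=\{0\}\) plays no role.

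\emph{Inclusion \(E_{k+1}\subseteq\im(\AnnT_k)\).} Given \(1\le j\le 2k\) with \(\tauD((k+1)^2-j)\ge j\), take \(r=L_k-j\in\{1,\dots,2k\}\). Then \(u(r)=r+\tauD(k^2+r)\ge L_k\), so the first branch of Proposition \ref{prop:backward-recursion} gives \(\AnnT_k(r)=u(r)-L_k=\tauD((k+1)^2-j)-j\). This is the chosen element of \(E_{k+1}\).

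There is no real obstacle. The only points needing care are the index bookkeeping, namely matching \(j\le 2k\) with \(j\le 2(k+1)-2\), and excluding the square \(k^2\) as a crossing point. Both are handled by the inequality \(\tauD(n)<2\sqrt n\) already proved in Lemma \ref{lem:no-skip}.
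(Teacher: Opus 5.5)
Your proof is correct and takes the same approach as the paper. The forward inclusion uses the last pre-crossing point \((k+1)^2-j\) and excludes the square endpoint \(j=2k+1\) via \(\tauD(k^2)<2k\). The reverse inclusion starts at \((k+1)^2-j\), which crosses in one step; you phrase this through the first branch of the backward recursion with \(r=L_k-j\), which amounts to the same observation.
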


\begin{proof}
If an orbit starting in \(I_k\) first enters \(I_{k+1}\), let its last value below \((k+1)^2\) be \((k+1)^2-j\).  Since this value lies in \(I_k\), we have \(1\le j\le2k+1\).  The endpoint \(j=2k+1\) would be the square \(k^2\), and it is not active because \(\tauD(k^2)<2k<2k+1\).  Thus \(1\le j\le2k=2(k+1)-2\).  The landing offset is
\[
        \Tmap((k+1)^2-j)-(k+1)^2=\tauD((k+1)^2-j)-j,
\]
and crossing means \(\tauD((k+1)^2-j)\ge j\).  Hence the offset lies in \(E_{k+1}\).

Conversely, if \(1\le j\le2k\) and \(\tauD((k+1)^2-j)\ge j\), then the point \((k+1)^2-j\) lies in \(I_k\).  Starting at this point, the next step crosses into \(I_{k+1}\) with offset \(\tauD((k+1)^2-j)-j\).  Thus every element of \(E_{k+1}\) occurs in \(\im(\AnnT_k)\).
\end{proof}

\begin{definition}[Thin annular graph]
The thin annular graph has vertices \((k,r)\) with \(k\ge1\) and \(r\in E_k\), and a directed edge
\[
        (k,r)\longrightarrow (k+1,\AnnT_k(r)).
\]
The main branch is the path \((k,b_k)\), where \(b_k\) is the first-entry offset of the orbit of \(1\) into \(I_k\).
\end{definition}

\begin{proposition}[Bad orbits as thin paths]\label{prop:thin-paths}
A forward orbit meets the orbit of \(1\) if and only if its corresponding thin annular path eventually meets the main branch.  Consequently, failure of global coalescence is equivalent to the existence of an infinite path through the thin annular graph that avoids the main branch at every sufficiently large level.
\end{proposition}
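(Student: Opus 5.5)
The plan is first to make precise what ``the corresponding thin annular path'' of a forward orbit is.  Given a start $x$, choose $k_0$ with $x\in I_{k_0}$, and let $c_k$ be the first-entry offset of the orbit of $x$ into $I_k$ for $k>k_0$.  By Lemma~\ref{lem:no-skip} the orbit enters every later annulus, and each $c_k$ is an element of $\im(\AnnT_{k-1})$, so $c_k\in E_k$ by Proposition~\ref{prop:image-E}.  Moreover $c_{k+1}=\AnnT_k(c_k)$ directly from the definition of $\AnnT_k$: the orbit from the first-entry point $k^2+c_k$ first enters $I_{k+1}$ at offset $\AnnT_k(c_k)$.  Hence $(k,c_k)_{k>k_0}$ is a directed path in the thin annular graph.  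The main branch is the special case $x=1$, where $b_1=0\in E_1$ and $b_{k+1}=\AnnT_k(b_k)$.

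For the forward direction, suppose $\Tmap^i(x)=\Tmap^j(1)=y$ with $y\in I_m$.  I will argue that for every $k>m$ the two first-entry points into $I_k$ coincide, since both orbits pass through $y$ and agree from then on; so $c_k=b_k$ for all $k>m$.  For the converse, if $c_k=b_k$ for some $k$, then the orbits of $x$ and $1$ both contain the integer $k^2+c_k$, so they meet.  Once the paths agree at one level, the recursion $c_{k+1}=\AnnT_k(c_k)$ forces agreement at every later level.  Hence ``eventually meets'' and ``agrees at all sufficiently large levels'' are equivalent.

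For the consequence, I will use Lemma~\ref{lem:equivalence}, by which failure of global coalescence means some $x$ has $x\not\sim 1$.  Its thin path is then infinite and, by the above, never meets the main branch beyond its starting level.  Conversely, take an infinite path $(k,r_k)_{k\ge k_1}$ with $r_{k+1}=\AnnT_k(r_k)$ that avoids the main branch at all sufficiently large levels.  Its vertex $r_{k_1}\in E_{k_1}$ is realized as an actual first-entry point $k_1^2+r_{k_1}$ of some orbit, and the orbit of $x=k_1^2+r_{k_1}$ has exactly this thin path.  If $x\sim1$, the forward direction would give $r_k=b_k$ for all large $k$, which is a contradiction, so coalescence fails.

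The only delicate point I expect is bookkeeping.  The thin path of $x$ is defined only from the level after $x$'s own annulus, or from $x$'s own level if $x$ is itself a first-entry point.  A path avoiding the main branch ``at every sufficiently large level'' is equivalent to avoiding it at every level, because a single meeting propagates forward.  I will state this propagation explicitly so that the two formulations in the proposition match.
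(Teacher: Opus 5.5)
Your proposal is correct and follows essentially the same route as the paper. Like the paper, you build the thin path from first-entry offsets via \(\im(\AnnT_k)=E_{k+1}\), show that agreement at one level means a shared integer while a shared orbit value forces agreement at all later levels, and realize an arbitrary thin path by the orbit of its initial vertex. Your explicit remark that a single meeting propagates forward only makes precise a step the paper uses implicitly.
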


\begin{proof}
Once an orbit has made one square crossing, Proposition \ref{prop:image-E} places every later first-entry offset in the corresponding \(E_k\), and the successive offsets are related by the annular transfer maps.  Thus every actual orbit determines a path in the thin graph after deleting finitely many initial annuli.  Conversely, every thin path starting from a specified vertex \((k,r)\) is realized by the actual forward orbit of the integer \(k^2+r\): the outgoing edge from \((k,r)\) is defined precisely by following that orbit until its first entry into \(I_{k+1}\), and iteration gives the later vertices.

If the first-entry offset of an orbit at level \(k\) equals the main offset \(b_k\), then the two orbits are at the same integer \(k^2+b_k\), and therefore coalesce.  Conversely, if an orbit coalesces with the orbit of \(1\) at some value lying inside an annulus, then after both are followed to the next square annulus their first-entry offsets agree; hence the corresponding thin path meets the main branch.  Therefore a forward orbit meets the orbit of \(1\) exactly when its thin path eventually meets the main branch.

If global coalescence fails, choose an orbit that never meets the orbit of \(1\).  After its first square crossing it yields an infinite thin path avoiding the main branch at every later level.  Conversely, an infinite thin path that avoids the main branch from some level onward is realized by the orbit from its initial vertex, and the preceding paragraph shows that this orbit never coalesces with the orbit of \(1\).
\end{proof}

\begin{proposition}[Square frontier identity]\label{prop:square-frontier}
For every \(k\ge2\),
\[
        \Frontier_{k^2}=k^2+E_k
        :=\{k^2+r:r\in E_k\}.
\]
Consequently, for every \(X<k^2\),
\[
        R(X)\le |E_k|.
\]
\end{proposition}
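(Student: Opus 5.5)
The plan is to prove the set identity by double inclusion, unpacking the definition \(\Frontier_N=\{\Tmap(m):m<N\le\Tmap(m)\}\) with \(N=k^2\), and then to read off the component bound from the Frontier lemma (\Cref{lem:frontier}).

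\emph{Inclusion \(\Frontier_{k^2}\subseteq k^2+E_k\).} Take \(m<k^2\le\Tmap(m)\) and write \(m=k^2-j\) with \(j\ge1\).
\begin{itemize}
\item By \Cref{lem:no-skip}, \(\Tmap(m)<(\sqrt m+1)^2\). If \(m<(k-1)^2\), this would give \(\Tmap(m)<k^2\), contradicting the crossing condition. Hence \(m\ge(k-1)^2\), i.e.\ \(j\le 2k-1\).
\item The endpoint \(j=2k-1\) is the square \(m=(k-1)^2\). As remarked after the definition of \(E_k\), \(\tauD((k-1)^2)<2k-1\), so this start cannot cross. Thus \(1\le j\le 2k-2\).
\item The crossing condition \(k^2-j+\tauD(k^2-j)\ge k^2\) is exactly \(\tauD(k^2-j)\ge j\).
\item The endpoint is \(\Tmap(m)=k^2+(\tauD(k^2-j)-j)\), and the offset \(\tauD(k^2-j)-j\) lies in \(E_k\).
\end{itemize}

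\emph{Inclusion \(k^2+E_k\subseteq\Frontier_{k^2}\).} Each \(r\in E_k\) arises from some \(j\in[1,2k-2]\) with \(\tauD(k^2-j)\ge j\). Put \(m=k^2-j\). Then \(m\ge k^2-2k+2\ge 1\) for \(k\ge2\), so \(m\in\N\) and \(m<k^2\). Also \(\Tmap(m)=k^2+r\ge k^2\). Hence \(k^2+r\in\Frontier_{k^2}\).

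This is essentially the computation in \Cref{prop:image-E} with \(k+1\) replaced by \(k\). The only difference is that here arbitrary \(m<k^2\) are allowed, not just those starting in \(I_{k-1}\); \Cref{lem:no-skip} removes this difference.

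\emph{Component bound.} Since \(r\mapsto k^2+r\) is injective, \(|\Frontier_{k^2}|=|E_k|\). For \(X<k^2\), \Cref{lem:frontier} with \(N=k^2\) (valid since \(k^2\ge 4\ge2\)) gives \(R(X)\le|\Frontier_{k^2}|=|E_k|\).

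The only mildly delicate step is the range restriction \(j\le 2k-2\): it uses \Cref{lem:no-skip} to exclude starts below \((k-1)^2\), together with the strict bound \(\tauD(n)<2\sqrt n\) at the square \((k-1)^2\). Everything else is direct unpacking of the definitions.
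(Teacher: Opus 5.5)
Your proof is correct and follows the paper's argument. Both use double inclusion with the range \(1\le j\le 2k-2\) forced by \(\tauD(n)<2\sqrt n\) and by the exclusion of the square \((k-1)^2\), and then apply \Cref{lem:frontier} at \(N=k^2\); the only cosmetic difference is that you get \(j\le 2k-1\) through \Cref{lem:no-skip} rather than directly from \(j\le\tauD(k^2-j)<2k\), which is the same estimate.
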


\begin{proof}
Let \(y\in\Frontier_{k^2}\).  Then \(y=\Tmap(m)\) for some \(m<k^2\le \Tmap(m)\).  Write \(m=k^2-j\).  Since crossing requires \(j\le\tauD(k^2-j)\), and since \(\tauD(k^2-j)<2\sqrt{k^2-j}<2k\), we first get \(1\le j\le2k-1\).  The endpoint \(j=2k-1\) would give \(m=(k-1)^2\), but then \(\tauD(m)<2(k-1)<2k-1\), so it cannot cross.  Hence \(1\le j\le2k-2\), and
\[
        y-k^2=\tauD(k^2-j)-j\in E_k.
\]
Thus \(\Frontier_{k^2}\subseteq k^2+E_k\).  Conversely, if \(r=\tauD(k^2-j)-j\in E_k\), then \(m=k^2-j<k^2\) and \(\Tmap(m)=k^2+r\ge k^2\), so \(k^2+r\in\Frontier_{k^2}\).  This proves the identity.  The inequality \(R(X)\le |E_k|\) follows from Lemma \ref{lem:frontier} with \(N=k^2\).
\end{proof}

\begin{corollary}[Square-frontier certificates]\label{cor:square-frontier-cert}
If \(k^2+E_k\subseteq\mathcal C(1)\), then
\[
        [1,k^2-1]\subseteq\mathcal C(1).
\]
Hence the Erd\H{o}s--Graham problem is equivalent to the existence of arbitrarily large \(k\) for which \(k^2+E_k\subseteq\mathcal C(1)\).
\end{corollary}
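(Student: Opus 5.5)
The first assertion should follow directly from \Cref{prop:square-frontier} together with \Cref{cor:frontier-cert}. By \Cref{prop:square-frontier}, for every \(k\ge2\) the crossing endpoint set at the square level is \(\Frontier_{k^2}=k^2+E_k\). The hypothesis \(k^2+E_k\subseteq\mathcal C(1)\) is therefore exactly the hypothesis \(\Frontier_N\subseteq\mathcal C(1)\) with \(N=k^2\). Applying \Cref{cor:frontier-cert} with this \(N\) gives \([1,k^2-1]\subseteq\mathcal C(1)\). For completeness I would also recall the underlying reason from \Cref{lem:frontier}: any \(x<k^2\) has a strictly increasing orbit, so that orbit first crosses \(k^2\) at some point of \(\Frontier_{k^2}\). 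That point lies in the component of \(x\) and, by hypothesis, in \(\mathcal C(1)\).

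For the equivalence, I will argue the two directions separately.

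\emph{Forward direction.} Suppose the certificate holds for arbitrarily large \(k\). Then every fixed positive integer \(n\) satisfies \(n<k^2\) for some such \(k\), so \(n\in\mathcal C(1)\). Hence \(\Gamma\) is connected, and \Cref{lem:equivalence} shows this is exactly the coalescence problem.

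\emph{Converse direction.} If coalescence holds, then \(\Gamma\) is connected by \Cref{lem:equivalence}, so \(\mathcal C(1)=\N\). The inclusion \(k^2+E_k\subseteq\mathcal C(1)\) then holds trivially for every \(k\ge2\).

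There is no real obstacle here. The only point needing care is that \Cref{prop:square-frontier} is stated for \(k\ge2\), and this is precisely the range in which the certificate is used. Since the statement only involves arbitrarily large \(k\), the case \(k=1\), where \(E_1=\{0\}\) was defined by convention, never needs to be considered.
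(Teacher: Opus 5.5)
Your proposal is correct and matches the paper's proof. Like the paper, you identify \(\Frontier_{k^2}=k^2+E_k\) via the square frontier identity, apply the frontier certificate with \(N=k^2\), and obtain the equivalence by letting \(k\to\infty\), with the converse immediate once \(\Gamma\) is connected.
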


\begin{proof}
This is Corollary \ref{cor:frontier-cert} at the square level \(N=k^2\), using Proposition \ref{prop:square-frontier}.  The equivalence follows by letting \(k\to\infty\); the converse is immediate if the graph is connected.
\end{proof}

\begin{corollary}[Static exit-set criterion]\label{cor:liminf-E}
If \(L\) is a nonnegative integer and \(\liminf_{k\to\infty}|E_k|\le L\), then \(\Gamma\) has at most \(L\) connected components.  In particular, if \(\liminf |E_k|=2\), then \(\Gamma\) has at most two connected components.
\end{corollary}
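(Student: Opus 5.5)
The plan is to combine the square frontier identity with the monotonicity of \(R\). By Proposition~\ref{prop:square-frontier}, for every \(k\ge2\) and every \(X<k^2\) we have \(R(X)\le|E_k|\), where \(R(X)\) counts the components of \(\Gamma\) meeting \([1,X]\). Since \(|E_k|\) is a nonnegative integer, \(\liminf_{k}|E_k|\le L\) means there is an infinite sequence \(k_1<k_2<\cdots\) with \(|E_{k_i}|\le L\) for every \(i\).

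First, fix any \(X\ge1\). Choose \(i\) large enough that \(k_i^2>X\). Proposition~\ref{prop:square-frontier} then gives \(R(X)\le|E_{k_i}|\le L\). Hence \(R(X)\le L\) for every \(X\).

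Second, pass to the whole graph. Suppose \(\Gamma\) had at least \(L+1\) distinct components \(\mathcal C_0,\dots,\mathcal C_L\). Each is nonempty, so pick \(x_j\in\mathcal C_j\) and set \(X=\max_j x_j\). Then all \(L+1\) components meet \([1,X]\), so \(R(X)\ge L+1\). This contradicts the first step. Thus \(\Gamma\) has at most \(L\) components, and the case \(L=2\) is the stated special case.

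No step is a genuine obstacle. The only point needing care is that the bound \(R(X)\le|E_k|\) must be available for arbitrarily large \(k\) beyond each fixed \(X\); this is exactly what the \(\liminf\) hypothesis supplies, through the infinite subsequence with \(|E_{k_i}|\le L\).
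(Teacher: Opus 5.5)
Your proposal is correct and follows essentially the same route as the paper: the paper applies Proposition~\ref{prop:square-frontier} for arbitrarily large \(k\) with \(X<k^2\) and \(|E_k|\le L\) to get \(R(X)\le L\), then takes the supremum over \(X\). Your second step simply makes that supremum argument explicit by choosing one representative from each of \(L+1\) hypothetical components.
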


\begin{proof}
For each fixed \(X\), choose arbitrarily large \(k\) with \(X<k^2\) and \(|E_k|\le L\).  Proposition \ref{prop:square-frontier} gives \(R(X)\le L\).  Taking the supremum over \(X\) shows that the total number of components is at most \(L\).
\end{proof}

\begin{definition}[Confluence widths]\label{def:confluence-widths}
For \(k\ge2\), define
\[
        W_{k,0}=E_k,
        \qquad
        W_{k,s}=\AnnT_{k+s-1}(W_{k,s-1})\quad(s\ge1).
\]
Thus \(W_{k,1}=\AnnT_k(E_k)\subseteq E_{k+1}\), and in general \(W_{k,s}\subseteq E_{k+s}\).
\end{definition}

\begin{proposition}[Dynamic frontier bound]\label{prop:dynamic-frontier}
For every \(k\ge2\) and every \(s\ge0\),
\[
        R(k^2-1)\le |W_{k,s}|.
\]
\end{proposition}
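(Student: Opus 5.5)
The plan is to show that every component of \(\Gamma\) meeting \([1,k^2-1]\) contains a point of the form \((k+s)^2+w\) with \(w\in W_{k,s}\). Since distinct components are disjoint, the map from components to points of \((k+s)^2+W_{k,s}\) will then be injective, which gives \(R(k^2-1)\le|W_{k,s}|\). For \(s=0\) this is exactly \Cref{prop:square-frontier} combined with \Cref{lem:frontier} at \(N=k^2\): every component meeting \([1,k^2-1]\) meets \(\Frontier_{k^2}=k^2+E_k=k^2+W_{k,0}\).

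For general \(s\) I would argue by induction on \(s\), carrying the following statement: every component \(\mathcal C\) meeting \([1,k^2-1]\) contains a point \(y_s=(k+s)^2+w_s\) with \(w_s\in W_{k,s}\), and \(y_s\) is the first entry into \(I_{k+s}\) of the forward orbit of some \(x\in\mathcal C\) with \(x<k^2\). The base case \(s=0\) is the frontier argument above: take \(x\in\mathcal C\) with \(x<k^2\), and let \(y_0\) be the first iterate of \(x\) that is \(\ge k^2\). By \Cref{lem:no-skip} this iterate lies in \(I_k\), and its offset lies in \(E_k\).

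For the inductive step, \(y_s\) lies in \(I_{k+s}\) at offset \(w_s\in W_{k,s}\). By definition of the transfer map, the first entry of its forward orbit into \(I_{k+s+1}\) is \((k+s+1)^2+\AnnT_{k+s}(w_s)\), and \(\AnnT_{k+s}(w_s)\in\AnnT_{k+s}(W_{k,s})=W_{k,s+1}\). This point lies on the forward orbit of \(y_s\), hence of \(x\), so it lies in \(\mathcal C\). It is also the first entry of the orbit of \(x\) into \(I_{k+s+1}\): orbits are strictly increasing and cannot skip annuli by \Cref{lem:no-skip}, so the orbit of \(x\) enters \(I_{k+s+1}\) only after passing through \(I_{k+s}\) at or beyond \(y_s\).

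Finally, the points \((k+s)^2+w\) with \(w\in W_{k,s}\) are distinct integers. Assigning to each component \(\mathcal C\) meeting \([1,k^2-1]\) one such point \(y_s\in\mathcal C\) is therefore injective, because distinct components are disjoint by \Cref{lem:equivalence}. This yields \(R(k^2-1)\le|W_{k,s}|\).

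There is no serious obstacle here. The only point that needs care is the bookkeeping that the tracked point is exactly the first-entry point of the orbit, so that applying \(\AnnT_{k+s}\) to its offset is legitimate. This rests on monotonicity of orbits together with \Cref{lem:no-skip}.
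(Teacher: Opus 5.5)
Your proposal is correct and follows the paper's own argument. You take the square-crossing endpoint in \(k^2+E_k\) supplied by the frontier identity, push it through \(s\) annuli via the transfer maps so it lands in \((k+s)^2+W_{k,s}\), and conclude injectivity from disjointness of components; the first-entry bookkeeping in your induction is harmless but not needed, since \(\AnnT_{k+s}(w)\) is defined for the orbit of any point \((k+s)^2+w\) of \(I_{k+s}\).
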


\begin{proof}
By Proposition \ref{prop:square-frontier}, every component meeting \([1,k^2-1]\) contains a square-crossing endpoint \(k^2+r\) with \(r\in E_k\).  Starting from that endpoint and following the orbit through the next \(s\) square annuli gives a first-entry offset in \(W_{k,s}\).  If two components land at the same offset in \(W_{k,s}\), then they contain the same integer and are the same component.  Hence the number of components meeting \([1,k^2-1]\) is at most \(|W_{k,s}|\).
\end{proof}

\begin{proposition}[Ultimate confluence widths]\label{prop:ultimate-widths}
For each fixed \(k\ge2\), the sequence \(|W_{k,s}|\) is nonincreasing in \(s\) and therefore stabilizes.  If
\[
        \ell_k=\lim_{s\to\infty}|W_{k,s}|,
\]
then
\[
        \ell_k=R(k^2-1).
\]
Consequently, if the total number of connected components of \(\Gamma\) is finite, then \(\ell_k\) is eventually equal to that total number; if the total number is infinite, then \(\ell_k\to\infty\).  In particular, \(\Gamma\) is connected if and only if \(\ell_k=1\) for arbitrarily large \(k\).
\end{proposition}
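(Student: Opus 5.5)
Since \(W_{k,s}=\AnnT_{k+s-1}(W_{k,s-1})\) is the image of a finite set under a map, \(|W_{k,s}|\le|W_{k,s-1}|\). A nonincreasing sequence of positive integers stabilizes, so \(\ell_k\) exists. Proposition \ref{prop:dynamic-frontier} gives \(R(k^2-1)\le|W_{k,s}|\) for every \(s\), hence \(R(k^2-1)\le\ell_k\). The substance of the proof is the reverse inequality \(\ell_k\le R(k^2-1)\).

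For the reverse inequality, fix \(k\). Each \(r\in E_k\) is, by Proposition \ref{prop:square-frontier}, of the form \(r=\tauD(k^2-j)-j\) with \(k^2-j<k^2\). So the integer \(k^2+r=\Tmap(k^2-j)\) lies in a component meeting \([1,k^2-1]\). There are at most \(R(k^2-1)\) such components, so partition the finite set \(E_k\) according to the component of \(k^2+r\); there are at most \(R(k^2-1)\) classes.

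Two offsets \(r,r'\) in the same class give integers \(k^2+r\sim k^2+r'\). By Lemma \ref{lem:equivalence} their forward orbits have a common value \(v\). Choose \(s\) so large that \(v<(k+s)^2\) for every such pair; this is possible because there are only finitely many pairs. Then both orbits pass through \(v\) before first entering \(I_{k+s}\). Since orbits are deterministic, both orbits coincide from \(v\) onward, so their first-entry offsets into \(I_{k+s}\) agree. These offsets are exactly the images of \(r\) and \(r'\) under the composite \(\AnnT_{k+s-1}\circ\cdots\circ\AnnT_k\). This is the step I expect to need the most care: one must check that following the orbit of \(k^2+r\) to its first entry into \(I_{k+s}\) is the same as iterating the transfer maps, including the case \(s=0\) and offsets landing inside annuli. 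It follows from the definition of \(\AnnT\) as a first-entry map, exactly as in Proposition \ref{prop:thin-paths}. Consequently \(|W_{k,s}|\le R(k^2-1)\) for this \(s\), hence \(\ell_k\le R(k^2-1)\), and therefore \(\ell_k=R(k^2-1)\).

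For the consequences, note that \(R(k^2-1)\) is nondecreasing in \(k\) and tends to the total number of components, since every component has a least element and is eventually counted. If the total number \(C\) is finite, \(R(k^2-1)\) is an integer-valued nondecreasing sequence bounded by \(C\), so it eventually equals \(C\). If there are infinitely many components, then \(R(k^2-1)\to\infty\). For the last claim, if \(\Gamma\) is connected then \(\ell_k=1\) for all \(k\). Conversely, if \(\ell_k=1\) for arbitrarily large \(k\), then \(R(k^2-1)=1\) along a sequence of \(k\) tending to infinity. Since \(R\) is monotone, \(R(X)=1\) for all \(X\), so \(\Gamma\) has one component.
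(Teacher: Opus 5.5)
Your proof is correct and follows essentially the same route as the paper: monotonicity comes from taking images of finite sets, and for the upper bound, offsets of \(E_k\) in the same component share an orbit value below some \((k+s)^2\) and therefore merge under the composite transfer, while the square-frontier identity supplies the count \(R(k^2-1)\). The only difference is organizational: you obtain the lower bound \(R(k^2-1)\le\ell_k\) by citing \Cref{prop:dynamic-frontier}, whereas the paper proves both inequalities together by showing that, for all large \(s\), the fibres of the composite transfer are exactly the component classes represented by \(k^2+E_k\).
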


\begin{proof}
The monotonicity is immediate from
\[
        W_{k,s+1}=\AnnT_{k+s}(W_{k,s}),
\]
since the image of a finite set has cardinality at most the cardinality of the set.  Hence the nonempty integer sequence \(|W_{k,s}|\) stabilizes.

For \(r\in E_k\), let \(\Psi_s(r)\) be the element of \(W_{k,s}\) obtained by starting at \(k^2+r\) and recording the first-entry offset after \(s\) further square annuli.  If \(\Psi_s(r)=\Psi_s(r')\) for some \(s\), then the two corresponding orbits contain the same first-entry integer in \(I_{k+s}\), so \(k^2+r\) and \(k^2+r'\) lie in the same component of \(\Gamma\).

Conversely, suppose \(k^2+r\) and \(k^2+r'\) lie in the same component.  By Lemma \ref{lem:equivalence}, membership in the same component is the same as eventual coalescence of forward orbits: there are iterates of the two starting values that are equal.  Choose a square annulus strictly beyond that common value.  From that annulus onward the two forward orbits have the same first-entry integer, hence \(\Psi_s(r)=\Psi_s(r')\) for all sufficiently large \(s\).  Since \(E_k\) is finite, there are only finitely many pairs \(r,r'\); taking the maximum of the finitely many required coalescence levels shows that, for all sufficiently large \(s\), the fibres of \(\Psi_s\) are exactly the component classes represented by \(k^2+E_k\).

By the square-frontier identity, every component meeting \([1,k^2-1]\) is represented by at least one point of \(k^2+E_k\), and every point of \(k^2+E_k\) has a predecessor below \(k^2\).  Therefore the number of component classes represented by \(k^2+E_k\) is exactly \(R(k^2-1)\).  This proves \(\ell_k=R(k^2-1)\).  The final assertions follow because \(R(k^2-1)\) increases to the total number of components, in the extended sense.
\end{proof}

\begin{corollary}[Dynamic confluence criterion]\label{cor:dynamic-confluence}
Suppose that for some fixed nonnegative integer \(L\) there are arbitrarily large pairs \((k,s)\), with \(k\to\infty\), such that \(|W_{k,s}|\le L\).  Then \(\Gamma\) has at most \(L\) connected components.  In particular,
\[
        \liminf_{k\to\infty}|\AnnT_k(E_k)|=1
\]
would imply connectedness of \(\Gamma\), while \(\liminf_{k\to\infty}|\AnnT_k(E_k)|\le2\) would imply that \(\Gamma\) has at most two connected components.
\end{corollary}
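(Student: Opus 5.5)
The plan is to reduce the corollary to the dynamic frontier bound, \Cref{prop:dynamic-frontier}, and then let \(X\) tend to infinity, in the same way as the proof of \Cref{cor:liminf-E}.

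First fix \(X\ge1\). By hypothesis there are pairs \((k,s)\) with \(k\) arbitrarily large and \(|W_{k,s}|\le L\), so we may choose one with \(k^2-1\ge X\). Since \(R\) is nondecreasing in \(X\) (a component meeting \([1,X]\) also meets \([1,k^2-1]\)), \Cref{prop:dynamic-frontier} gives
\[
        R(X)\le R(k^2-1)\le |W_{k,s}|\le L .
\]
As \(X\) is arbitrary, at most \(L\) components meet any initial segment. Every component has a least element and so meets some \([1,X]\). Hence the total number of components is the increasing limit of \(R(X)\), and this limit is at most \(L\).

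For the \(s=1\) specializations, recall \(W_{k,1}=\AnnT_k(E_k)\). If \(\liminf_k|\AnnT_k(E_k)|\le2\), then, since these are integer cardinalities, \(|W_{k,1}|\le2\) for arbitrarily large \(k\). The general statement with \(L=2\) then gives at most two components.

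If the liminf equals \(1\), the same argument with \(L=1\) gives at most one component. The set \(\N\) is nonempty, so there is exactly one component, and \(\Gamma\) is connected. An alternative route is \Cref{prop:ultimate-widths}: \(\ell_k\le|W_{k,1}|=1\) for arbitrarily large \(k\).

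There is no real obstacle here. The only points needing care are that "arbitrarily large pairs" must mean \(k\to\infty\), which is what allows \(k^2-1\ge X\), and that the liminf of integers is attained infinitely often.
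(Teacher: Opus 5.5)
Your proof is correct and follows essentially the same route as the paper: choose a pair \((k,s)\) with \(X<k^2\), apply \Cref{prop:dynamic-frontier} to get \(R(X)\le L\), take the supremum over \(X\), and specialize to \(s=1\). Your extra remarks make explicit steps the paper leaves implicit: the monotonicity of \(R\), the fact that an integer-valued liminf is attained infinitely often, and the nonemptiness of \(\N\) in the case \(L=1\).
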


\begin{proof}
For every fixed \(X\), choose one of the pairs with \(X<k^2\).  Proposition \ref{prop:dynamic-frontier} gives \(R(X)\le L\).  Taking the supremum over \(X\) gives the first assertion.  The special cases use \(s=1\).
\end{proof}

\begin{center}
\begin{minipage}{0.96\linewidth}
\scriptsize
\raggedright
\textbf{Illustrative small square-frontier data only.}  The column \(\AnnT_k(E_k)\) is the one-step dynamic frontier; no proof in the paper relies on this table.
\par\medskip\noindent
\begin{tabular}{@{}c p{0.28\linewidth} p{0.24\linewidth} p{0.24\linewidth}@{}}
\toprule
\(k\) & active deficits \(j\) & \(E_k\) & \(\AnnT_k(E_k)\) \\
\midrule
2 & \(\{1,2\}\) & \(\{0,1\}\) & \(\{0\}\) \\
3 & \(\{1,2,3\}\) & \(\{0,1,3\}\) & \(\{2\}\) \\
4 & \(\{1,2,4\}\) & \(\{2,3\}\) & \(\{0,7\}\) \\
5 & \(\{1,2,3,4,5\}\) & \(\{0,1,7\}\) & \(\{2\}\) \\
6 & \(\{1,2,3,4,6\}\) & \(\{1,2,3\}\) & \(\{1,2\}\) \\
7 & \(\{1,2,3,4,5,7\}\) & \(\{0,1,2,9\}\) & \(\{0,2,5\}\) \\
8 & \(\{1,2,4,8\}\) & \(\{0,2,5,8\}\) & \(\{0,3,5\}\) \\
9 & \(\{1,2,3,4,5,6,9\}\) & \(\{0,1,3,5,9\}\) & \(\{2,8\}\) \\
10 & \(\{1,2,4,10\}\) & \(\{2,4,5,8\}\) & \(\{1,2,15\}\) \\
11 & \(\{1,2,3,4,5,7,9\}\) & \(\{1,2,15\}\) & \(\{0,1,2\}\) \\
12 & \(\{1,2,3,4,6,8,12\}\) & \(\{0,1,2,3,8\}\) & \(\{3,4\}\) \\
\bottomrule
\end{tabular}
\par\medskip\noindent
\emph{Reproducibility note.}  The table is not used in any proof.  It is obtained by the definitions of \(E_k\) and \(\AnnT_k\): enumerate \(1\le j\le2k-2\), retain those with \(\tauD(k^2-j)\ge j\), form the overshoots \(\tauD(k^2-j)-j\), and compute \(\AnnT_k\) by the recursion in \Cref{prop:backward-recursion}.  The computation is finite and exact for each displayed \(k\).
\end{minipage}
\end{center}

\section{A parity law for transfers}\label{sec:parity}

The only square in \(I_k\) is its left endpoint \(k^2\).  Since \(\tauD(n)\) is odd exactly when \(n\) is a square, parity is rigid within an annulus.

\begin{proposition}[Transfer parity law]\label{prop:parity}
For every \(k\ge1\),
\[
        r>0 \quad\Longrightarrow\quad \AnnT_k(r)\equiv r+1\pmod2,
\]
and
\[
        \AnnT_k(0)\equiv0\pmod2.
\]
\end{proposition}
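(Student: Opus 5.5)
The plan is to follow a single orbit through the annulus \(I_k\) and track the parity of the offset step by step. The only tool needed is the fact recalled just before the statement: \(\tauD(n)\) is odd exactly when \(n\) is a perfect square, and the only square in \(I_k\) is \(k^2\), which has offset \(0\).

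First, I would establish the basic one-step rule. Write \(u(r)=r+\tauD(k^2+r)\), as in \Cref{prop:backward-recursion}.
\begin{itemize}
\item If \(1\le r\le 2k\), then \(k^2+r\) is not a square, so \(\tauD(k^2+r)\) is even and \(u(r)\equiv r\pmod 2\).
\item If \(r=0\), then \(\tauD(k^2)\) is odd and \(u(0)\equiv 1\pmod 2\).
\end{itemize}

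Next I would prove the claim for \(r>0\) by descending induction on \(r\), following the recursion of \Cref{prop:backward-recursion}. Fix \(r>0\).
\begin{itemize}
\item If \(u(r)\ge L_k=2k+1\), then \(\AnnT_k(r)=u(r)-(2k+1)\). Since \(2k+1\) is odd and \(u(r)\equiv r\), this gives \(\AnnT_k(r)\equiv r+1\pmod 2\).
\item If \(u(r)<L_k\), then \(\AnnT_k(r)=\AnnT_k(u(r))\) with \(u(r)>r>0\). The inductive hypothesis gives \(\AnnT_k(u(r))\equiv u(r)+1\equiv r+1\pmod 2\).
\end{itemize}
The base of the induction is \(r=2k\), which is the largest offset. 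There \(u(2k)\ge 2k+1\), so the first case applies and needs no hypothesis. Equivalently, one can say directly that every intra-annular step from a positive offset preserves parity, and the exit step flips parity because it subtracts the odd number \(L_k\).

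For \(r=0\) I would split on \(u(0)=\tauD(k^2)\), which is odd.
\begin{itemize}
\item If \(u(0)\ge L_k\), then \(\AnnT_k(0)=u(0)-L_k\) is odd minus odd, hence even. This case cannot actually occur, since \(\tauD(k^2)<2k\), but it costs nothing to include it.
\item Otherwise \(u(0)\) is a positive odd offset, and the positive case gives \(\AnnT_k(0)\equiv u(0)+1\equiv 0\pmod 2\).
\end{itemize}

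I do not expect a real obstacle here. The only points needing care are that the induction is well-founded, which holds because \(u(r)>r\), and that the offset \(0\) is never revisited after the first step. The second point holds because offsets strictly increase within an annulus.
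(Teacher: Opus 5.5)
Your proof is correct and follows essentially the paper's argument. The paper follows the orbit segment directly: from a positive offset every increment is even, so the landing offset is congruent to $k^2+r-(k+1)^2\equiv r+1$. For $r=0$ it uses that the single odd first step $\tauD(k^2)<2k$ stays inside $I_k$. Your descending induction through \Cref{prop:backward-recursion}, with the case $r=0$ reduced to the positive case at the odd offset $u(0)$, is a formalized version of the same parity bookkeeping.
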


\begin{proof}
If \(r>0\), the orbit segment from \(k^2+r\) until its first entry into \(I_{k+1}\) contains no square before crossing, because the only square in \([k^2,(k+1)^2)\) is \(k^2\).  Hence every increment \(\tauD(n)\) along the segment is even, and the parity of the integer itself is preserved until the landing point.  Therefore, if \(r'=\AnnT_k(r)\), then
\[
        r'\equiv k^2+r-(k+1)^2\equiv r+1\pmod2.
\]
For \(r=0\), the first step starts from the square \(k^2\), so the first increment \(\tauD(k^2)\) is odd.  Also \(\tauD(k^2)<2k\), so this first step does not reach \((k+1)^2\).  After that first step, the orbit is strictly inside \(I_k\) until crossing, and all further increments before crossing are even.  Thus the landing integer has parity \(k^2+1\).  Its offset modulo \((k+1)^2\) is therefore congruent to
\[
        k^2+1-(k+1)^2=-2k\equiv0\pmod2.
\]
\end{proof}

\begin{proposition}[Cross-parity coalescence requires a square]\label{prop:cross-parity-square}
Let \(x,y\in\N\) have opposite parity.  If \(\Tmap^a(x)=\Tmap^b(y)\) for some \(a,b\ge0\), then at least one of the values
\[
        x,\Tmap(x),\ldots,\Tmap^{a-1}(x),\qquad
        y,\Tmap(y),\ldots,\Tmap^{b-1}(y)
\]
is a square, with the empty list omitted if \(a=0\) or \(b=0\).
\end{proposition}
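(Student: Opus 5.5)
The plan is to argue by contraposition, using the parity fact already used for the transfer parity law: \(\tauD(n)\) is odd exactly when \(n\) is a perfect square. Suppose none of the listed values \(x,\Tmap(x),\ldots,\Tmap^{a-1}(x)\) and \(y,\Tmap(y),\ldots,\Tmap^{b-1}(y)\) is a square. Then every increment along both segments is even. The aim is to show this forces \(x\) and \(y\) to have the same parity, contradicting the hypothesis.

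\emph{Step 1.} Track the parity of the \(x\)-orbit. For each \(0\le i\le a-1\) we have
\[
        \Tmap^{i+1}(x)=\Tmap^i(x)+\tauD(\Tmap^i(x)),
\]
and \(\tauD(\Tmap^i(x))\) is even because \(\Tmap^i(x)\) is not a square. By induction on \(i\), \(\Tmap^a(x)\equiv x\pmod 2\). If \(a=0\), the list is empty and the congruence is trivial.

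\emph{Step 2.} Apply the same argument to the \(y\)-orbit to get \(\Tmap^b(y)\equiv y\pmod2\).

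\emph{Step 3.} Combine the two congruences with the equality \(\Tmap^a(x)=\Tmap^b(y)\):
\[
        x\equiv \Tmap^a(x)=\Tmap^b(y)\equiv y\pmod 2.
\]
This contradicts the assumption that \(x\) and \(y\) have opposite parity. Hence at least one listed value is a square.

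There is no real obstacle here. The only points needing care are the boundary cases \(a=0\) or \(b=0\), where the corresponding list is empty and the congruence holds trivially, and making sure the final common value \(\Tmap^a(x)\) is correctly excluded from the list. The exclusion is right because only values whose increment is actually taken (indices up to \(a-1\) and \(b-1\)) affect parity.
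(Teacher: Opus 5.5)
Your proposal is correct and follows essentially the same argument as the paper. Both use the fact that \(\tauD(n)\) is odd exactly when \(n\) is a square, so each step \(n\mapsto \Tmap(n)\) preserves parity at nonsquares; this gives \(\Tmap^a(x)\equiv x\) and \(\Tmap^b(y)\equiv y \pmod 2\), which contradicts the opposite parity of \(x\) and \(y\).
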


\begin{proof}
The divisor count \(\tauD(n)\) is odd exactly when \(n\) is a square.  Hence \(\Tmap(n)=n+\tauD(n)\) changes the parity of \(n\) exactly at square starting values, and preserves parity at nonsquare starting values.  If neither of the two listed orbit segments contained a square, then \(\Tmap^a(x)\) would have the parity of \(x\) and \(\Tmap^b(y)\) would have the parity of \(y\), contradicting the equality because \(x\) and \(y\) have opposite parity.
\end{proof}

\begin{proposition}[Square-frontier parity split]\label{prop:square-frontier-parity}
Let \(k\ge2\), and let \(j\) be an active deficit for \(E_k\), so \(1\le j\le2k-2\) and \(j\le\tauD(k^2-j)\).  Put
\[
        r=\tauD(k^2-j)-j\in E_k.
\]
Then \(k^2-j\) is not a square, \(\tauD(k^2-j)\) is even, and
\[
        r\equiv j\pmod2.
\]
Equivalently, the square-frontier endpoint \(k^2+r\) has the same parity as the crossing start \(k^2-j\).  In the coordinates of the previous annulus, if \(s=2k-1-j\) is the offset of \(k^2-j\) in \(I_{k-1}\), then
\[
        r\equiv s+1\pmod2.
\]
\end{proposition}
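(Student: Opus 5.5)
The plan is to reduce everything to the fact that the only square in the window \([ (k-1)^2, k^2)\) is its left endpoint, combined with the parity rule \(\tauD(n)\) odd \(\iff\) \(n\) square, exactly as in \Cref{prop:parity}.

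\emph{Step 1: \(k^2-j\) is not a square.} Since \(1\le j\le 2k-2\), we have
\[
        (k-1)^2=k^2-(2k-1)<k^2-j<k^2 .
\]
Thus \(k^2-j\) lies strictly between two consecutive squares and cannot itself be a square. It follows that \(\tauD(k^2-j)\) is even.

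\emph{Step 2: parity of the overshoot.} From \(r=\tauD(k^2-j)-j\) and the evenness of \(\tauD(k^2-j)\), we get \(r\equiv -j\equiv j\pmod 2\). Consequently
\[
        k^2+r\equiv k^2+j\equiv k^2-j\pmod2,
\]
so the endpoint has the same parity as the crossing start. Equivalently, \(k^2+r=\Tmap(k^2-j)\) differs from \(k^2-j\) by an even increment.

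\emph{Step 3: offset coordinates.} The start \(k^2-j\) lies in \(I_{k-1}\). Its offset there is
\[
        s=k^2-j-(k-1)^2=2k-1-j,
\]
so \(s\equiv j+1\pmod2\). Combining with Step 2 gives \(r\equiv j\equiv s+1\pmod 2\). This agrees with the case \(r>0\) of \Cref{prop:parity} in the present setting, since here \(s\ge1\).

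There is no real obstacle. The only point needing care is checking that the range \(1\le j\le 2k-2\) excludes both squares \(k^2\) (the case \(j=0\)) and \((k-1)^2\) (the case \(j=2k-1\)); the definition of \(E_k\) guarantees both exclusions.
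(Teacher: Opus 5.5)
Your proof is correct and follows essentially the same route as the paper: both note that the only square in \([(k-1)^2,k^2)\) is \((k-1)^2\), which is the excluded deficit \(j=2k-1\), so \(\tauD(k^2-j)\) is even. Both then read off \(r\equiv -j\equiv j\pmod 2\) and \(s+1=2k-j\equiv j\pmod 2\). Your closing consistency check with \Cref{prop:parity} is also valid: \(s\ge1\) and the first step from \(k^2-j\) already crosses, so \(\AnnT_{k-1}(s)=r\).
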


\begin{proof}
The interval \([(k-1)^2,k^2)\) contains only one square, namely \((k-1)^2\), which corresponds to the excluded endpoint \(j=2k-1\).  Thus \(k^2-j\) is nonsquare for every active \(j\), and \(\tauD(k^2-j)\) is even.  Therefore
\[
        r=\tauD(k^2-j)-j\equiv -j\equiv j\pmod2.
\]
The equality of integer parities follows from \(\Tmap(k^2-j)=k^2+r\) and the fact that the increment is even.  Finally, \(s=2k-1-j\), so \(s+1=2k-j\equiv j\pmod2\).
\end{proof}

\section{The size and moments of the exit sets}\label{sec:E-size}

The exit sets \(E_k\) are thin in maximal-order terms, but they are not eventually bounded and they do not eventually avoid any fixed residue class.

\begin{proposition}[Pointwise upper bound]\label{prop:E-upper}
As \(k\to\infty\),
\[
        |E_k|\le
        \exp\left((2\log 2+o(1))\frac{\log k}{\log\log k}\right)=k^{o(1)}.
\]
More precisely, every active deficit \(j\) and every overshoot \(r=\tauD(k^2-j)-j\in E_k\) satisfy
\[
        j,r\le D_k,
        \qquad
        D_k:=\max_{n<k^2}\tauD(n),
\]
and \(D_k\) satisfies the displayed bound.
\end{proposition}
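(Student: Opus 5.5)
The plan has three steps: bound \(j\) and \(r\) by the maximal divisor count below \(k^2\), bound \(|E_k|\) by the number of active deficits, and then insert Wigert's theorem.

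\textbf{Step 1: \(j,r\le D_k\).} For an active deficit \(j\), the integer \(n=k^2-j\) satisfies \(1\le j\le 2k-2\). Hence \(n<k^2\) and \(n\ge k^2-2k+2>0\), so \(\tauD(n)\le D_k\). Activity means \(j\le \tauD(n)\), so \(j\le D_k\). The overshoot satisfies \(r=\tauD(n)-j\le \tauD(n)-1<D_k\).

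\textbf{Step 2: \(|E_k|\le D_k\).} The map \(j\mapsto \tauD(k^2-j)-j\) sends the set of active deficits onto \(E_k\). By Step 1 that set lies in \(\{1,\dots,D_k\}\), so it has at most \(D_k\) elements. Alternatively, \(E_k\subseteq\{0,\dots,D_k-1\}\) gives the same bound directly.

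\textbf{Step 3: Wigert.} Wigert's theorem \cite{Wigert} gives, for \(x\to\infty\),
\[
\max_{n\le x}\tauD(n)\le \exp\Bigl((\log 2+o(1))\tfrac{\log x}{\log\log x}\Bigr).
\]
Apply this with \(x=k^2\). Then \(\log x=2\log k\) and \(\log\log x=\log\log k+\log 2=(1+o(1))\log\log k\), so
\[
D_k\le \exp\Bigl((2\log 2+o(1))\tfrac{\log k}{\log\log k}\Bigr).
\]
The exponent is \(o(\log k)\), so the right side is \(k^{o(1)}\).

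There is no real obstacle. The only care needed is in Step 3: when changing \(\log\log(k^2)\) to \(\log\log k\), the factor \(1+o(1)\) must be absorbed into the \(o(1)\) in the constant, which is routine.
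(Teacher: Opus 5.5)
Your proposal is correct and follows the paper's proof essentially verbatim. You bound \(j\) and \(r\) by \(\tauD(k^2-j)\le D_k\), deduce \(|E_k|\le D_k\) by counting active deficits, and apply Wigert's theorem at \(x=k^2\); your explicit checks that \(k^2-j\ge (k-1)^2+1>0\) and that \(\log\log(k^2)=\log\log k+\log 2\) fill in small details the paper leaves implicit.
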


\begin{proof}
If \(r=\tauD(k^2-j)-j\in E_k\), then \(j\le\tauD(k^2-j)\le D_k\), and also \(r\le\tauD(k^2-j)\le D_k\).  Therefore the number of possible active deficits is at most \(D_k\), and \(|E_k|\le D_k\).  Wigert's maximal-order theorem \cite{Wigert} gives
\[
        \max_{n\le x}\tauD(n)
        =\exp\left((\log 2+o(1))\frac{\log x}{\log\log x}\right).
\]
Taking \(x=k^2\) gives the result.
\end{proof}

\begin{theorem}[Residue-universality of exit sets]\label{thm:E-residue-universal}
Fix a modulus \(q\ge1\), a residue class \(a\pmod q\), and an integer \(R\ge1\).  Then there is an arithmetic progression of integers \(k\) such that, for all sufficiently large \(k\) in the progression, \(E_k\) contains at least \(R\) distinct elements congruent to \(a\pmod q\).  In particular,
\[
        \limsup_{k\to\infty}|E_k\cap(a+q\Z)|=\infty.
\]
\end{theorem}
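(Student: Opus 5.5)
The construction is via the Chinese remainder theorem on \(k\).  We fix \(R\) deficits and force each \(\tauD(k^2-j)\) to be large and to have a prescribed residue.  Choose distinct odd deficits \(j_1<\dots<j_R\), for instance \(j_i=2i-1\), together with target values \(t_i\).  Each \(t_i\) should be an even number with \(t_i\ge j_i\) and \(t_i-j_i\equiv a\pmod q\), and the \(t_i-j_i\) should be pairwise distinct.  It is cleaner to aim only for lower bounds on \(\tauD\) plus control of the residue.  Controlling \(\tauD(k^2-j)\) exactly is hard, so I would instead make \(\tauD(k^2-j)\) divisible by a large power of \(2\) and large.  The overshoot is \(\tauD(k^2-j)-j\), however, and its residue mod \(q\) depends on \(\tauD\) mod \(q\), which is not controllable by congruences alone.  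So I expect the residue condition to be the main obstacle, and I would handle it by arranging the \emph{exact} value of \(\tauD(k^2-j)\) modulo \(q\) through its factorization structure.

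The concrete plan is as follows.  For each \(i\), pick a set \(P_i\) of fresh primes, disjoint across \(i\), and exponents \(e_p\).  Impose the congruences \(k^2\equiv j_i\pmod{p^{e_p}}\) and \(k^2\not\equiv j_i\pmod{p^{e_p+1}}\) for \(p\in P_i\).  These are solvable once \(j_i\) is a nonzero quadratic residue mod \(p\) and \(p\nmid j_i\), which holds for a positive density of primes, namely those splitting in \(\mathbb Q(\sqrt{j_i})\), or all primes if \(j_i\) is a square.  Lifting by Hensel's lemma and combining via CRT then fixes \(k\) in a progression modulo \(M=\prod p^{e_p+1}\), and each \(k^2-j_i\) carries the prescribed \(P_i\)-part.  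Then \(\tauD(k^2-j_i)=\prod_{p\in P_i}(e_p+1)\cdot\tauD(m_i)\), where \(m_i\) is the unknown cofactor.  The product \(\prod(e_p+1)\) can be made divisible by \(q\) and huge, but the cofactor \(m_i\) is unknown, so \(\tauD(k^2-j_i)\) is only known to be \(\equiv0\pmod q\) and large.  That suffices.  Choose all \(j_i\equiv -a\pmod q\) (positive integers), so that \(r_i=\tauD(k^2-j_i)-j_i\equiv a\pmod q\).  Activity requires \(\tauD(k^2-j_i)\ge j_i\), which holds once \(\prod(e_p+1)\ge j_R\).  We also need \(j_R\le 2k-2\), which is true for large \(k\).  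For the parity consistency of \cref{prop:square-frontier-parity}, no issue arises, since \(r_i\equiv j_i\pmod 2\) is automatic.

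The remaining difficulty is \emph{distinctness} of the \(R\) overshoots \(r_i\), since the unknown cofactors could make two of them coincide.  I would resolve this by choosing the \(j_i\) spread very far apart.  With \(D_i:=\prod_{p\in P_i}(e_p+1)\), we have \(r_i\equiv -j_i\pmod{D_i}\), and \(r_i+j_i\) is a multiple of \(D_i\).  Choose \(D_1\mid D_2\mid\cdots\) and take the \(j_i\) to be distinct residues modulo \(D_1\) that are all \(\equiv -a\pmod q\), with \(q\mid D_1\) and each \(j_i<D_1\).  Then \(r_i\equiv -j_i\pmod{D_1}\) are pairwise distinct modulo \(D_1\), hence distinct integers.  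Activity is again ensured by \(D_1>j_R\).  The deficits needed are a finite set of integers, and we need primes with \(j_i\) a quadratic residue for each, which Dirichlet/Chebotarev or simply quadratic reciprocity supplies; alternatively take all \(j_i\) to be perfect squares congruent to \(-a\) mod \(q\) and distinct mod \(D_1\), which avoids even that.  The statement thus holds for all sufficiently large \(k\) in the residue class modulo \(M\), and the \(\limsup\) claim follows since \(R\) is arbitrary.
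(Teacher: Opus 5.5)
Your concrete plan is correct and is essentially the paper's proof. You force exact valuations $v_p(k^2-j_i)=e_p$ by Hensel lifting and the Chinese remainder theorem, so that a common modulus divides every $\tau(k^2-j_i)$ (the paper's $L$, your $D_1$, with $q\mid L$ and distinct deficits $j_i<L$, $j_i\equiv -a\pmod q$); activity then follows from $\tau(k^2-j_i)\ge L>j_i$, and distinctness from $r_i\equiv -j_i\pmod L$. The paper simply uses one prime $p_i\equiv 1\pmod{4j_i}$ per deficit, with exponent $L-1$.

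One caution: your fallback of taking all $j_i$ to be perfect squares does not work in general, because $-a$ need not be a square modulo $q$ (e.g.\ $q=3$, $a=1$). Keep the quadratic-residue-prime route instead.
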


\begin{proof}
Choose a positive integer \(L\), divisible by \(q\) and at least \(2\), so large that there are distinct positive integers
\[
        j_1,\ldots,j_R<L
\]
with \(j_i\equiv -a\pmod q\).  We force \(L\mid \tauD(k^2-j_i)\) for all \(i\), while keeping the resulting overshoots distinct modulo \(L\).

For each \(i\), choose a prime \(p_i>L\), with the \(p_i\)'s pairwise distinct, such that
\[
        p_i\equiv1\pmod {4j_i}.
\]
Dirichlet's theorem on primes in arithmetic progressions supplies infinitely many choices \cite[Chs.~6,~20]{HardyWright}.  The congruence \(p_i\equiv1\pmod {4j_i}\) implies that \(j_i\) is a quadratic residue modulo \(p_i\).  Indeed, \(p_i\equiv1\pmod4\), and for every odd prime \(\ell\mid j_i\) one has \(p_i\equiv1\pmod\ell\), so quadratic reciprocity gives \((\ell/p_i)=(p_i/\ell)=1\).  If \(2\mid j_i\), then \(p_i\equiv1\pmod8\), so \((2/p_i)=1\).  Multiplying the relevant Legendre symbols, with even prime-power exponents contributing trivially, gives \((j_i/p_i)=1\).  Since \(p_i\nmid j_i\), Hensel lifting gives a solution \(u_i\) of
\[
        u_i^2\equiv j_i\pmod {p_i^{L-1}}
\]
with \(p_i\nmid u_i\).  Write \(u_i^2-j_i=c_i p_i^{L-1}\).  Among the classes \(u_i+t p_i^{L-1}\pmod {p_i^L}\), choose \(t\) so that \(c_i+2u_i t\not\equiv0\pmod {p_i}\).  The resulting residue \(b_i\pmod {p_i^L}\) satisfies
\[
        v_{p_i}(b_i^2-j_i)=L-1.
\]
The moduli \(p_i^L\) are pairwise coprime, so the Chinese remainder theorem gives an arithmetic progression of \(k\)'s satisfying
\[
        k\equiv b_i\pmod {p_i^L}\qquad(1\le i\le R).
\]
For every such \(k\),
\[
        v_{p_i}(k^2-j_i)=L-1,
\]
and therefore the factor \(L=(L-1)+1\) divides \(\tauD(k^2-j_i)\).  Taking \(k\) sufficiently large within the progression ensures \(1\le j_i\le2k-2\).  Since \(L>j_i\), each \(j_i\) is active:
\[
        \tauD(k^2-j_i)\ge L>j_i.
\]
Thus \(r_i:=\tauD(k^2-j_i)-j_i\in E_k\).  Moreover \(L\mid\tauD(k^2-j_i)\), so
\[
        r_i\equiv -j_i\pmod L.
\]
The \(j_i\)'s are distinct modulo \(L\), hence the \(r_i\)'s are distinct.  Since \(L\) is divisible by \(q\) and \(j_i\equiv -a\pmod q\), each \(r_i\equiv a\pmod q\).
\end{proof}

\begin{corollary}[Exit sets are unbounded in each parity]\label{cor:E-unbounded-parity}
For every \(R\ge1\) and every \(\varepsilon\in\{0,1\}\), there are infinitely many \(k\) for which \(E_k\) contains at least \(R\) distinct elements congruent to \(\varepsilon\pmod2\).  In particular,
\[
        \limsup_{k\to\infty}|E_k|=\infty.
\]
\end{corollary}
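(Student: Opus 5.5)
This is a direct specialization of \Cref{thm:E-residue-universal} with modulus \(q=2\).

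Fix \(R\ge1\) and \(\varepsilon\in\{0,1\}\), and apply \Cref{thm:E-residue-universal} with \(q=2\) and \(a=\varepsilon\). The theorem supplies an arithmetic progression of integers \(k\), say \(k\equiv k_0\pmod M\) with \(M\ge1\). For every sufficiently large \(k\) in this progression, \(E_k\) contains at least \(R\) distinct elements congruent to \(\varepsilon\pmod2\).

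The only point to check is that this gives infinitely many \(k\). That holds because an arithmetic progression with positive common difference \(M\) contains infinitely many positive integers. In the construction, \(M=\prod_i p_i^L\) is indeed positive, and the \(p_i\) exist by Dirichlet's theorem. So all sufficiently large members of the progression qualify, and there are infinitely many of them.

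For the ``in particular'' assertion, note that \(|E_k|\ge|E_k\cap(\varepsilon+2\Z)|\). Hence for every \(R\) there are infinitely many \(k\) with \(|E_k|\ge R\), and this gives \(\limsup_{k\to\infty}|E_k|=\infty\). Equivalently, this is the displayed \(\limsup\) in \Cref{thm:E-residue-universal} with \(q=2\) combined with the same trivial inclusion.

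There is no real obstacle here; all the arithmetic was done in the proof of \Cref{thm:E-residue-universal}.
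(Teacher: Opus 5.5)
Your proposal is correct and is the paper's own proof: the corollary is obtained by taking \(q=2\) and \(a=\varepsilon\) in \Cref{thm:E-residue-universal}. Your extra remarks, that an arithmetic progression with positive difference contains infinitely many terms and that \(|E_k|\ge|E_k\cap(\varepsilon+2\Z)|\) gives the \(\limsup\), are exactly the routine details the paper leaves implicit.
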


\begin{proof}
Take \(q=2\) and \(a=\varepsilon\) in Theorem \ref{thm:E-residue-universal}.
\end{proof}

For \(k\ge2\), let
\[
        a_k=\#\{1\le j\le 2k-2:\tauD(k^2-j)\ge j\}
\]
denote the number of active deficits at the square \(k^2\).  Thus \(|E_k|\le a_k\), since several active deficits may give the same overshoot.

\begin{proposition}[Exact active-deficit reindexing]\label{prop:active-reindexing}
For an integer \(m\ge1\), let
\[
        g(m)=\bigl(\textup{the least square strictly larger than }m\bigr)-m .
\]
Then, for every \(K\ge2\),
\[
        \sum_{2\le k\le K} a_k
        =\#\{1\le m\le K^2:\tauD(m)\ge g(m)\}.
\]
\end{proposition}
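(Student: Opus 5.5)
The plan is a bijection between pairs \((k,j)\) and integers \(m\). To each pair \((k,j)\) with \(2\le k\le K\) and \(1\le j\le 2k-2\) assign \(m=k^2-j\). Conversely, for \(m\ge1\) let \(k(m)\) be the integer with \((k-1)^2\le m<k^2\), so that \(k^2\) is the least square strictly larger than \(m\). Then \(g(m)=k^2-m\), and \(j=g(m)\) ranges over \(1\le j\le 2k-1\). The value \(j=2k-1\) occurs exactly when \(m=(k-1)^2\) is a square.

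First I will check that the map \((k,j)\mapsto k^2-j\) is injective on the index set and describe its image. Since \(1\le j\le 2k-2\), we have \((k-1)^2<k^2-j<k^2\). So \(k\) is recovered as \(k(m)\) and \(j\) as \(g(m)\). The image as \(k\) runs over \(2\le k\le K\) is the set of non-square \(m\) in \([1,K^2)\). Indeed, \(\bigcup_{k=2}^K\,((k-1)^2,k^2)\) covers \([1,K^2-1]\) minus the squares \(1,4,\dots,(K-1)^2\). Under this correspondence the activity condition \(\tauD(k^2-j)\ge j\) becomes \(\tauD(m)\ge g(m)\). Summing the indicator of activity therefore gives
\[
\sum_{2\le k\le K}a_k=\#\{1\le m<K^2,\ m \text{ not a square}: \tauD(m)\ge g(m)\}.
\]

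Next I will show that squares never satisfy the condition, so they may be added back to the right-hand side. If \(m=(k-1)^2\) with \(k\ge2\), then \(g(m)=2k-1\). On the other hand \(\tauD(m)<2\sqrt m=2(k-1)<2k-1\), by the inequality in Lemma~\ref{lem:no-skip}. The same remark made after the definition of \(E_k\) shows that this endpoint is inactive. The square \(m=K^2\) itself has \(g(K^2)=2K+1>2K>\tauD(K^2)\), so including the endpoint \(m=K^2\) in the range \(1\le m\le K^2\) adds nothing. This also covers \(m=1\): we have \(g(1)=3>1=\tauD(1)\).

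Combining the two steps gives the claimed identity. There is no real obstacle here; the only care needed is the bookkeeping at the endpoints. Specifically, \(j=2k-1\) corresponds to a square, and the extra point \(m=K^2\) in the stated range lies outside the image. Both are handled by the bound \(\tauD(n)<2\sqrt n\).
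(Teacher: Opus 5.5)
Your proof is correct and follows essentially the same route as the paper: the reindexing \(m=k^2-j\) identifies the active deficits at levels \(2\le k\le K\) with the non-square \(m<K^2\) satisfying \(\tauD(m)\ge g(m)\). The bound \(\tauD(\ell^2)<2\ell<2\ell+1=g(\ell^2)\) then shows that squares, including \(m=1\) and \(m=K^2\), never contribute; your explicit bijection simply makes the paper's interval-by-interval bookkeeping more formal.
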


\begin{proof}
If \((k-1)^2<m<k^2\), then the least square strictly larger than \(m\) is \(k^2\), so \(g(m)=k^2-m\).  Writing \(j=k^2-m\) gives exactly the range \(1\le j\le2k-2\) used in the definition of \(a_k\).  Thus non-square \(m\) in this interval contributes to the right-hand side exactly when its corresponding deficit is active at level \(k\).

It remains only to check the square endpoints, which are harmless.  If \(m=\ell^2\), then \(g(m)=(\ell+1)^2-\ell^2=2\ell+1\), while \(\tauD(\ell^2)<2\ell<2\ell+1\).  Hence squares never contribute to the right-hand side.  This includes \(m=K^2\), so the displayed identity is exact.
\end{proof}

\begin{proposition}[Baseline and explicit lower-density lower bounds]\label{prop:E-lower-bound}
For every \(k\ge2\),
\[
        |E_k|\ge2.
\]
Moreover, \(a_k\ge3\) for every \(k\ge3\), and every
\[
        k\equiv3,5\pmod 8
\]
has \(|E_k|\ge3\).  Consequently, as \(K\to\infty\),
\[
        \sum_{2\le k\le K}a_k\ge 3K+O(1),
        \qquad
        \sum_{2\le k\le K}|E_k|\ge \frac94K+O(1).
\]
More generally, for every \(R\ge1\) there is a positive-density arithmetic progression of levels \(k\) for which \(|E_k|\ge R\) for all sufficiently large \(k\) in that progression.
\end{proposition}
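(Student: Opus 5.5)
The plan is to exhibit explicit active deficits with distinct overshoots, separating them by parity where possible and by the \(2\)-adic valuation of \(\tauD\) where not, and then sum the resulting pointwise bounds. Throughout I use Proposition \ref{prop:square-frontier-parity}: every active \(j\) gives a nonsquare \(k^2-j\), so \(\tauD(k^2-j)\) is even and \(r_j=\tauD(k^2-j)-j\equiv j\pmod 2\).

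\emph{Step 1: \(|E_k|\ge2\).} For \(k\ge2\), both \(j=1\) and \(j=2\) lie in \([1,2k-2]\). Since \(k^2-1\) and \(k^2-2\) are nonsquares exceeding \(1\), their divisor counts are even and at least \(2\), so both deficits are active. Their overshoots are \(r_1\) (odd) and \(r_2\) (even), hence distinct.

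\emph{Step 2: \(a_k\ge3\) for \(k\ge3\).} The deficits \(j=1,2\) are always active, so one more is needed.
\begin{itemize}
\item For \(k=3\), take \(j=3\): \(\tauD(6)=4\ge3\).
\item For \(k\ge4\), take \(j=4\le2k-2\). Then \(k^2-4=(k-2)(k+2)\) has the four distinct divisors \(1\), \(k-2\ge2\), \(k+2\) and \(k^2-4\), so \(\tauD(k^2-4)\ge4\).
\end{itemize}

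\emph{Step 3: \(|E_k|\ge3\) when \(k\equiv3,5\pmod8\).} This is the only step needing an idea, because a third overshoot has the parity of one of \(r_1,r_2\) and so needs a finer separation. I use \(j=3\) against \(j=1\).
\begin{itemize}
\item \(2\)-adic valuations. Here \(k\) is odd, one of \(k\pm1\) is \(\equiv2\pmod4\) and the other is \(\equiv4\pmod8\), so \(v_2(k^2-1)=3\). Also \(k^2-3\equiv6\pmod8\), so \(v_2(k^2-3)=1\).
\item Activity of \(j=3\). Since \(k^2-3\equiv6\pmod 8\), it has the form \(2n\) with \(n\) odd, and \(n>1\) for \(k\ge3\). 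Hence \(\tauD(k^2-3)=2\tauD(n)\ge4>3\), so \(j=3\) is active.
\item Distinctness. Write \(k^2-1=8m\) with \(m\) odd, so \(\tauD(k^2-1)=4\tauD(m)\). Then \(r_1=r_3\) would mean \(4\tauD(m)-1=2\tauD(n)-3\), i.e. \(\tauD(n)=2\tauD(m)+1\) is odd. That forces \(n=s^2\) to be an odd square.
\item Contradiction. Then \(k^2-3=2s^2\equiv2\pmod8\), contradicting \(k^2-3\equiv6\pmod8\).
\end{itemize}
So \(r_1,r_2,r_3\) are three distinct overshoots.

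\emph{Step 4: summation and the general statement.} Summing \(a_k\ge3\) over \(3\le k\le K\) gives \(3K+O(1)\). For the exit sets, \(|E_k|\ge2\) for all \(k\), plus an extra \(1\) on the classes \(k\equiv3,5\pmod 8\), which have density \(1/4\). This gives \(2K+K/4+O(1)=\tfrac94K+O(1)\). For the final assertion, apply Theorem \ref{thm:E-residue-universal} with \(q=1\). Its proof produces the progression \(k\equiv b\pmod{\prod_i p_i^L}\), which has positive density, and on which \(|E_k|\ge R\) for all sufficiently large \(k\).
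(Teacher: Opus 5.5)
Your proposal is correct and follows the paper's proof essentially step for step: deficits \(j=1,2\) separated by parity, a third active deficit giving \(a_k\ge3\), \(j=3\) separated from \(j=1\) \(2\)-adically on \(k\equiv3,5\pmod 8\), then summation and residue-universality with \(q=1\). The differences are cosmetic. You use \(j=4\) for all \(k\ge4\) with four explicit divisors, where the paper splits by the parity of \(k\). You reach the final contradiction by forcing \((k^2-3)/2\) to be an odd square, whereas the paper notes directly that \((k^2-3)/2\equiv3\pmod 4\), so \(4\mid\tauD(k^2-3)\), and compares \(\tauD(k^2-3)\) with \(\tauD(k^2-1)\) modulo \(4\).
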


\begin{proof}
The deficits \(j=1\) and \(j=2\) are active for every \(k\ge2\), because \(\tauD(k^2-1)\ge2\) and \(\tauD(k^2-2)\ge2\).  The integers \(k^2-1\) and \(k^2-2\) are not squares, so their divisor counts are even.  Hence the two corresponding overshoots
\[
        r_1=\tauD(k^2-1)-1,
        \qquad
        r_2=\tauD(k^2-2)-2
\]
have opposite parity and are distinct.  This proves \(|E_k|\ge2\).

If \(k\ge3\) is odd, then \(k^2-3\) is an even integer larger than \(2\).  It is not a square: if \(k^2-3=s^2\), then \((k-s)(k+s)=3\), which has no solution with odd \(k\ge3\).  Hence it is composite, and since it is not a square its divisor count is even and at least \(4\).  Thus \(j=3\) is active for every odd \(k\ge3\).  If \(k\ge4\) is even, then
\[
        k^2-4=(k-2)(k+2)
\]
is a positive composite integer.  It is not a square: if \(k^2-4=s^2\), then \((k-s)(k+s)=4\), which has no solution with \(k\ge4\).  Therefore \(\tauD(k^2-4)\ge4\).  Thus \(j=4\) is active for every even \(k\ge4\).  Together with the always-active deficits \(j=1,2\), this proves \(a_k\ge3\) for every \(k\ge3\).

We next show that, on the residue classes \(k\equiv3,5\pmod8\), the third active deficit gives a genuinely new exit offset.  Put
\[
        r_3=\tauD(k^2-3)-3.
\]
The parity of \(r_3\) is odd, whereas \(r_2\) is even, so \(r_3\ne r_2\).  It remains to rule out \(r_3=r_1\).  If \(k\equiv3,5\pmod8\), then \(v_2(k^2-1)=3\), and hence \(4\mid\tauD(k^2-1)\).  On the other hand, for every odd \(k\),
\[
        k^2-3=2m,
        \qquad
        m=\frac{k^2-3}{2}\equiv3\pmod4.
\]
Thus \(m\) is not a square, so \(\tauD(m)\) is even and \(4\mid\tauD(k^2-3)=2\tauD(m)\).  If \(r_3=r_1\), then
\[
        \tauD(k^2-3)=\tauD(k^2-1)+2,
\]
which is impossible modulo \(4\).  Therefore \(r_1,r_2,r_3\) are three distinct elements of \(E_k\) for every \(k\equiv3,5\pmod8\).

There are \(K+O(1)\) integers \(k\ge3\) and \(K/4+O(1)\) integers congruent to \(3\) or \(5\pmod8\) up to \(K\).  Combining \(a_k\ge3\) for all \(k\ge3\), the baseline \(|E_k|\ge2\), and the preceding paragraph gives the two displayed sums.  Finally, the residue-universality theorem applied with \(q=1\) gives, for each fixed \(R\), an arithmetic progression of levels on which \(|E_k|\ge R\) for all sufficiently large levels in the progression.
\end{proof}

\begin{corollary}[Linear excess over the minimal frontier]\label{cor:excess-not-o}
The minimal value \(|E_k|=2\) is not attained on a density-one set in the strong averaged sense.  More precisely,
\[
        \sum_{2\le k\le K} (|E_k|-2)_+ \ge \frac14K+O(1).
\]
Consequently the possible strengthening
\[
        \sum_{2\le k\le K}(|E_k|-2)=o(K)
\]
is false.
\end{corollary}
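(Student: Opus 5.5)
The plan is to read off the bound directly from \Cref{prop:E-lower-bound}. Every level \(k\ge2\) has \(|E_k|\ge2\), because the deficits \(j=1,2\) are always active and produce overshoots of opposite parity. In addition, every \(k\equiv3,5\pmod8\) with \(k\ge3\) has \(|E_k|\ge3\). Hence each summand \((|E_k|-2)_+\) is nonnegative for every \(k\). Moreover \((|E_k|-2)_+\ge1\) for every \(k\ge3\) in the two residue classes \(3,5\pmod 8\).

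The first step is to drop all levels outside these classes, which is legitimate since their summands are nonnegative. Next I bound each remaining summand below by \(1\). Counting the integers \(3\le k\le K\) with \(k\equiv3,5\pmod8\) gives \(2\lfloor K/8\rfloor+O(1)=K/4+O(1)\). This yields \(\sum_{2\le k\le K}(|E_k|-2)_+\ge \tfrac14K+O(1)\).

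For the consequence, I note that \(|E_k|-2=(|E_k|-2)_+\) for all \(k\ge2\), again because \(|E_k|\ge2\). So the unsigned sum \(\sum_{2\le k\le K}(|E_k|-2)\) equals the positive-part sum. It is therefore at least \(K/4+O(1)\), which is not \(o(K)\).

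The same reasoning shows the first sentence of the statement. If \(|E_k|=2\) held on a set of density one, then the level set \(\{k:|E_k|\ge3\}\) would have density zero. That contradicts the fact that it contains the residue classes \(3,5\pmod8\), which have density \(1/4\).

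There is no real obstacle here. The only point needing care is the nonnegativity \(|E_k|\ge2\) for all \(k\ge2\): it is what lets us discard the other levels and identify the signed sum with the positive-part sum. \Cref{prop:E-lower-bound} already supplies it.
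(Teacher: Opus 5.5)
Your proposal is correct and matches the paper's proof: restrict to the levels \(k\equiv3,5\pmod8\), where \Cref{prop:E-lower-bound} gives \(|E_k|\ge3\), and count those levels to get \(K/4+O(1)\). Your explicit remark that \(|E_k|\ge2\) makes the signed sum equal to the positive-part sum is left implicit in the paper, and it is precisely what justifies the ``consequently'' clause.
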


\begin{proof}
For every \(k\equiv3,5\pmod8\), Proposition \ref{prop:E-lower-bound} gives \(|E_k|\ge3\).  These levels have density \(1/4\), and each contributes at least one to \((|E_k|-2)_+\).
\end{proof}

\section{Average and fixed-moment bounds}\label{sec:E-average}

Recall that \(a_k\) denotes the number of active deficits at the square \(k^2\).  Then \(|E_k|\le a_k\), since several active deficits may give the same overshoot.

\begin{lemma}[Roots of a quadratic congruence]\label{lem:root-bound}
For \(j,d\ge1\), put
\[
        \rho_j(d)=\#\{x\pmod d:x^2\equiv j\pmod d\}.
\]
Then
\[
        \rho_j(d)\ll 2^{\omega(d)}(j,d)^{1/2},
\]
with an absolute implied constant.
\end{lemma}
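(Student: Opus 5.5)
The function \(\rho_j\) is multiplicative in \(d\) by the Chinese remainder theorem, and \((j,d)^{1/2}\) is multiplicative in \(d\) for fixed \(j\). So it suffices to prove a prime-power bound
\[
        \rho_j(p^a)\le C_p\,(j,p^a)^{1/2},
\]
where \(C_p=2\) for odd \(p\) and \(C_2\) is an absolute constant (say \(C_2=4\)). Multiplying over \(p^a\| d\) then gives \(\rho_j(d)\le C_2\,2^{\omega(d)}(j,d)^{1/2}\), since only the prime \(2\) can carry the extra factor. Thus the whole argument reduces to a local count.

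\emph{Unit case.} First suppose \(p\nmid j\). For odd \(p\), the congruence \(x^2\equiv j\pmod{p^a}\) has at most \(2\) solutions. Indeed, if \(x,y\) are both roots, then \(p^a\mid(x-y)(x+y)\). Since \(p\nmid x\) and \(p\) cannot divide both \(x-y\) and \(x+y\), we get \(x\equiv\pm y\). For \(p=2\), the same factorization shows that any two roots satisfy \(x\equiv\pm y\pmod{2^{a-1}}\), which leaves at most \(4\) roots. These are the standard Hensel-type facts.

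\emph{General case.} Now let \(p^b\| j\) with \(b\ge1\). If \(b\ge a\), we are counting \(x\) with \(x^2\equiv0\pmod{p^a}\). These are exactly the multiples of \(p^{\lceil a/2\rceil}\), so there are \(p^{\lfloor a/2\rfloor}\le (j,p^a)^{1/2}\) of them.

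If \(b<a\), a root must satisfy \(v_p(x^2)=b\). Hence \(b=2c\) is even and \(x=p^cy\) with \(p\nmid y\). The congruence becomes
\[
        y^2\equiv j/p^{2c}\pmod{p^{a-2c}},
\]
with \(y\) determined modulo \(p^{a-c}\). By the unit case there are at most \(C_p\) choices of \(y\) modulo \(p^{a-2c}\). Each lifts to \(p^{c}\) classes modulo \(p^{a-c}\), so
\[
        \rho_j(p^a)\le C_p\,p^c=C_p\,p^{b/2}=C_p\,(j,p^a)^{1/2}.
\]
This bookkeeping of lifts is the only delicate step. In particular one must be careful that the number of residues \(x\pmod{p^a}\) corresponds to \(y\) modulo \(p^{a-c}\), not modulo \(p^{a-2c}\). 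That correspondence is what produces the square-root exponent.

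Combining the prime-power bounds through multiplicativity gives \(\rho_j(d)\ll 2^{\omega(d)}(j,d)^{1/2}\) with an absolute constant, which is the statement of the lemma.
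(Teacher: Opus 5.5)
Your proof is correct and is essentially the paper's argument: a CRT reduction to prime powers, then a split into the cases \(v_p(j)\ge a\), \(v_p(j)<a\) odd, and \(v_p(j)=2c<a\), with the substitution \(x=p^c y\) reducing to a unit congruence that has at most \(2\) roots (at most \(4\) when \(p=2\)). You additionally prove the unit-case root counts and make explicit the \(p^c\) lifts from modulus \(p^{a-2c}\) to \(p^{a-c}\), both of which the paper leaves implicit.
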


\begin{proof}
By the Chinese remainder theorem it suffices to prove the corresponding bound for prime powers.  Let \(p^\alpha\|d\), and write \(\beta=v_p(j)\), with the convention that \(\beta\ge\alpha\) if \(p^\alpha\mid j\).  If \(\beta\ge\alpha\), then the congruence is \(x^2\equiv0\pmod {p^\alpha}\), which has \(p^{\lfloor \alpha/2\rfloor}\le p^{\min(\alpha,\beta)/2}\) solutions.  If \(\beta<\alpha\) is odd, there are no solutions.  If \(\beta=2c<\alpha\), then any solution has \(x=p^c y\), with \(p\nmid y\), and after division by \(p^{2c}\) one obtains a quadratic congruence for the unit \(y\) modulo \(p^{\alpha-2c}\).  Such a unit congruence has at most two solutions for odd \(p\), and at most four solutions for \(p=2\).  Thus the number of solutions modulo \(p^\alpha\) is \(O(p^c)=O(p^{\min(\alpha,\beta)/2})\), with at most a fixed factor, and with at most a factor \(2\) for each odd prime divisor of \(d\).  Multiplying over prime powers gives the stated estimate.
\end{proof}

For \(j\ge1\), define
\[
        B(j)=\sum_{e\mid j}\frac{2^{\omega(e)}}{\sqrt e}.
\]

\begin{lemma}[A uniform shifted-square divisor sum]\label{lem:shifted-square-divisor-sum}
Uniformly for \(K\ge2\) and \(1\le j\le 2K\),
\[
        \sum_{\substack{k\le K\\ k^2>j}} \tauD(k^2-j)
        \ll K(\log K)^2 B(j).
\]
\end{lemma}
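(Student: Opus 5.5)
We will use the Dirichlet hyperbola trick in its elementary form. For \(n=k^2-j\le K^2\) we have \(\tauD(n)\le 2\#\{d\mid n: d\le\sqrt n\}\le 2\#\{d\mid n:d\le K\}\). Hence
\[
        \sum_{\substack{k\le K\\ k^2>j}}\tauD(k^2-j)\le 2\sum_{d\le K}\#\{k\le K: k^2\equiv j\pmod d\}.
\]
For fixed \(d\le K\), the number of \(k\le K\) in the \(\rho_j(d)\) residue classes solving \(x^2\equiv j\pmod d\) is at most \(\rho_j(d)(K/d+1)\le 2K\rho_j(d)/d\). So the sum is \(\ll K\sum_{d\le K}\rho_j(d)/d\).

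We then insert \Cref{lem:root-bound}, \(\rho_j(d)\ll 2^{\omega(d)}(j,d)^{1/2}\), and write \(e=(j,d)\), \(d=em\). Using \(2^{\omega(em)}\le 2^{\omega(e)}2^{\omega(m)}\), we get
\[
        \sum_{d\le K}\frac{\rho_j(d)}{d}\ll\sum_{e\mid j}\frac{2^{\omega(e)}e^{1/2}}{e}\sum_{m\le K/e}\frac{2^{\omega(m)}}{m}\le B(j)\sum_{m\le K}\frac{2^{\omega(m)}}{m}.
\]
Here we dropped the coprimality condition \((m,j/e)=1\), which only enlarges the sum.

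Finally, since \(2^{\omega(m)}\le\tauD(m)\), we have \(\sum_{m\le K}2^{\omega(m)}/m\le\sum_{m\le K}\tauD(m)/m\le\bigl(\sum_{a\le K}1/a\bigr)^2\ll(\log K)^2\). This gives \(K(\log K)^2B(j)\) uniformly in \(j\). The condition \(j\le 2K\) is not really needed, beyond ensuring \(k^2-j\le K^2\).

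The only delicate point is the prime-power bookkeeping hidden in \Cref{lem:root-bound}. Its implied constant is a fixed factor at \(p=2\) together with at most a factor \(2\) per odd prime, so it is absorbed into \(2^{\omega}\) up to an absolute constant. This is already handled by that lemma, so no further obstacle is expected.
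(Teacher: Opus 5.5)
Your proof is correct and follows the paper's argument essentially step for step. You use divisor pairing to reduce to counting \(k\le K\) with \(k^2\equiv j\pmod d\) for \(d\le K\), apply \Cref{lem:root-bound}, split off a divisor \(e\) of \(j\), and bound \(\sum_{m\le K}2^{\omega(m)}/m\ll(\log K)^2\). The only differences are small streamlinings: you absorb the \(+1\) term via \(K/d+1\le 2K/d\) (the paper bounds it separately by \(K(\log K)B(j)\)), and you group by the exact gcd \(e=(j,d)\) rather than using \((j,d)^{1/2}\le\sum_{e\mid(j,d)}\sqrt e\).
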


\begin{proof}
Since \(j>0\), we have \(k^2-j<k^2\).  Hence, by divisor pairing,
\[
        \tauD(k^2-j)
        \le 2\sum_{\substack{d<k\\ d\mid k^2-j}}1 .
\]
Therefore
\[
 \sum_{\substack{k\le K\\ k^2>j}} \tauD(k^2-j)
 \le 2\sum_{d\le K}\#\{k\le K:k^2\equiv j\pmod d\}.
\]
For each modulus \(d\), the number of such \(k\) is at most \((K/d+1)\rho_j(d)\).  By Lemma \ref{lem:root-bound}, it remains to bound
\[
        K\sum_{d\le K}\frac{2^{\omega(d)}(j,d)^{1/2}}{d}
        +\sum_{d\le K}2^{\omega(d)}(j,d)^{1/2}.
\]
Using \((j,d)^{1/2}\le\sum_{e\mid(j,d)}\sqrt e\) and \(2^{\omega(em)}\le2^{\omega(e)}2^{\omega(m)}\), the first sum is at most
\[
        \sum_{e\mid j}\frac{2^{\omega(e)}}{\sqrt e}
        \sum_{m\le K/e}\frac{2^{\omega(m)}}{m}
        \ll B(j)(\log K)^2,
\]
because \(2^{\omega(m)}\le\tauD(m)\) and \(\sum_{m\le x}\tauD(m)/m\ll(\log x)^2\).  Similarly, the second sum is at most
\[
        \sum_{e\mid j}\sqrt e\,2^{\omega(e)}
        \sum_{m\le K/e}2^{\omega(m)}
        \ll K(\log K)B(j),
\]
because \(\sum_{m\le x}2^{\omega(m)}\le\sum_{m\le x}\tauD(m)\ll x\log x\).  Combining these estimates gives the claim.
\end{proof}

\begin{theorem}[Elementary average exit-set bound]\label{thm:E-average-elementary}
As \(K\to\infty\),
\[
        \sum_{2\le k\le K}|E_k|
        \le \sum_{2\le k\le K}a_k
        \ll K(\log K)^3 .
\]
\end{theorem}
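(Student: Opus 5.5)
The first inequality is immediate from \(|E_k|\le a_k\), so the plan concerns only the sum of the \(a_k\).

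\emph{Markov step.} Since a deficit \(j\) is active at level \(k\) only when \(\tauD(k^2-j)\ge j\), I would use the bound
\[
        \mathbf 1[\tauD(k^2-j)\ge j]\le \frac{\tauD(k^2-j)}{j}.
\]
This gives
\[
        \sum_{2\le k\le K}a_k\le \sum_{1\le j\le 2K-2}\frac1j\sum_{\substack{k\le K\\ k^2>j}}\tauD(k^2-j).
\]
Every active pair has \(j\le 2k-2\le 2K\) and \(k^2-j\ge (k-1)^2+1>0\), so the condition \(k^2>j\) costs nothing.

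\emph{Inner sums.} Lemma \ref{lem:shifted-square-divisor-sum} applies uniformly for \(j\le 2K\), and it bounds the right-hand side by
\[
        K(\log K)^2\sum_{j\le 2K}\frac{B(j)}{j}.
\]

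\emph{Harmonic average of \(B(j)\).} It remains to show \(\sum_{j\le x}B(j)/j\ll \log x\). Expanding the definition of \(B\) and writing \(j=em\),
\[
        \sum_{j\le x}\frac{B(j)}{j}
        =\sum_{e\le x}\frac{2^{\omega(e)}}{e^{3/2}}\sum_{m\le x/e}\frac1m
        \le (\log x+1)\sum_{e\ge1}\frac{2^{\omega(e)}}{e^{3/2}}.
\]
The last series converges, because \(2^{\omega(e)}\le\tauD(e)\) and \(\sum_e\tauD(e)e^{-3/2}=\zeta(3/2)^2\). Taking \(x=2K\) therefore gives \(\sum_{2\le k\le K}a_k\ll K(\log K)^3\).

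I expect no serious obstacle, since the heavy lifting is already done in Lemma \ref{lem:shifted-square-divisor-sum}. The one point needing care is that the Markov weight \(1/j\) loses nothing at small \(j\), and that the dependence of the lemma on \(j\) through \(B(j)\) is mild enough for \(\sum B(j)/j\) to grow only logarithmically. The convergent Dirichlet series at exponent \(3/2\) settles this. The third logarithm comes precisely from this harmonic sum over \(j\).
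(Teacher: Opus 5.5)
Your proposal is correct and follows the paper's proof essentially step for step: the Markov bound \(\mathbf 1[\tauD(k^2-j)\ge j]\le \tauD(k^2-j)/j\), interchange of summation, \Cref{lem:shifted-square-divisor-sum} for the inner sums, and the factorization \(j=em\) showing \(\sum_{j\le 2K}B(j)/j\ll\log K\) via convergence of \(\sum_e 2^{\omega(e)}e^{-3/2}\). Your explicit checks that \(k^2-j\ge(k-1)^2+1>0\) for \(j\le 2k-2\) and that the series is dominated by \(\zeta(3/2)^2\) are harmless details the paper leaves implicit.
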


\begin{proof}
For every active deficit \(j\) at level \(k\), one has \(j\le\tauD(k^2-j)\).  Hence
\[
        a_k\le \sum_{j=1}^{2k-2}\frac{\tauD(k^2-j)}j.
\]
After summing over \(k\le K\) and interchanging the order of summation,
\[
        \sum_{2\le k\le K}a_k
        \le \sum_{j\le2K}\frac1j
        \sum_{\substack{k\le K\\ k^2>j}}\tauD(k^2-j).
\]
By \Cref{lem:shifted-square-divisor-sum}, this is at most
\[
        K(\log K)^2\sum_{j\le2K}\frac{B(j)}j .
\]
Since
\[
\begin{aligned}
        \sum_{j\le2K}\frac{B(j)}j
        &=\sum_{j\le2K}\frac1j\sum_{e\mid j}\frac{2^{\omega(e)}}{\sqrt e}  \\
        &=\sum_{e\le2K}\frac{2^{\omega(e)}}{e^{3/2}}
          \sum_{m\le2K/e}\frac1m
        \ll \log K\sum_{e=1}^{\infty}\frac{2^{\omega(e)}}{e^{3/2}}
        \ll \log K,
\end{aligned}
\]
the active-deficit bound follows.  The estimate for \(|E_k|\) follows from
\(|E_k|\le a_k\).
\end{proof}

\subsection{Shifted-square theorem and Euler-product input}\label{subsec:analytic-inputs}

The unconditional first moment has already been proved by elementary root counting.
In this subsection we record and specialize the external shifted-square divisor input
\(\HST\).  The nonsquare part is a consequence of Henriot's discriminant-uniform
Nair--Tenenbaum theorem in its erratum-corrected upper-bound form; the square shifts are
handled separately in \Cref{prop:square-shift-estimates}.  Thus \(\HST\) is
unconditional relative to the cited corrected theorem, but it is not an elementary result
proved independently here.  We then isolate, as a separate input, the quadratic
Euler-product mean-value statement \(\HQE\).  This input is not proved here.  All
constants below are independent of the shift, the dyadic parameter, the truncation
parameter, and the outer summation parameter; they may depend only on the fixed exponents
named in the statement.

For a real quadratic character \(\chi\) and \(M\ge2\), put
\[
        L_M(1,\chi)=\prod_{p\le M}\left(1-\frac{\chi(p)}p\right)^{-1}.
\]
For \(C\ge0\), put
\[
        \mathfrak D_C(n)=\prod_{p\mid 2n}\left(1+\frac1p\right)^C.
\]
If \(j\) is not a square, \(\chi_j\) denotes the primitive quadratic character attached
to the squarefree kernel of \(j\); for primes \(p\nmid 2j\) this means
\(\chi_j(p)=(j/p)\).

The statement below is not the uncorrected 2012 Corollary 2.  It is the one-polynomial
divisor-power specialization after making the corrections from Henriot's erratum:
replacing \(\rho_{\mathbf R}\) by the corrected function \(\breve\rho_{\mathbf R}\), using
the modulus involving the squarefree kernels \(\kappa(a_i)\), incorporating the
submultiplicativity repair, and replacing \(D^*\) by \(a^*D^*\) in the exceptional-prime
set \cite{Henriot,HenriotErratum}.  In the one-polynomial case there is no
cross-factor congruence obstruction, but the normalized corrected congruence count must
still be kept.

For the present application the nonsquare polynomial is
\[
        Q_j(X)=X^2-j .
\]
When \(j\) is not a square, \(Q_j\) is primitive, irreducible over \(\mathbb Z\), monic,
of degree \(2\), and has no fixed prime divisor.  Its leading coefficient is \(a^*=1\)
and its discriminant is \(D^*=4j\).  Hence the corrected exceptional-prime set is exactly
\(p\mid 2j\).  Since there is only one irreducible factor, the property
\((P_{a_1,\ldots,a_r,n})\) in the erratum has no cross-factor part.  For \(r=1\), it
reduces to the ordinary divisibility condition, with the corrected modulus
\(a\kappa(a)\).  Thus the raw comparison \(\breve\rho_Q(a)\le \rho_Q(a)\) is not the
right assertion; the comparison used below is the normalized one
\[
        \frac{\breve\rho_Q(a)}{a\kappa(a)}\le \frac{\rho_Q(a)}{a},
        \qquad
        \rho_Q(a)=\#\{x\bmod a:Q(x)\equiv0\pmod a\}.
\]
Indeed, every corrected residue class modulo \(a\kappa(a)\) projects to a root modulo
\(a\), and each root modulo \(a\) has at most \(\kappa(a)\) lifts.  For \(a=p^\nu\), this
gives
\[
        \frac{\breve\rho_Q(p^\nu)}{p^\nu\kappa(p^\nu)}
        =\frac{\breve\rho_Q(p^\nu)}{p^{\nu+1}}
        \le \frac{\rho_Q(p^\nu)}{p^\nu}.
\]

\begin{proposition}[Corrected Henriot upper-bound input, one-polynomial divisor-power form]
\label{prop:henriot-erratum-one-polynomial}
Let \(B\ge1\) be fixed.  Let \(Q\in\mathbb Z[X]\) be primitive and irreducible,
of degree \(g\), with no fixed prime divisor, leading coefficient \(a^*\), and corrected
discriminant parameter \(D^*\).  Put
\[
        \rho_Q(p)=\#\{x\bmod p:Q(x)\equiv0\pmod p\}.
\]
Uniformly for fixed \(0<\alpha,\delta\le1\), for
\(x\ge C_0\|Q\|^\delta\) and \(x^\alpha<y\le x\),
\begin{equation}\label{eq:henriot-shiu-one-variable}
        \sum_{x<n\le x+y}\tauD(|Q(n)|)^B
        \ll_{B,g,\alpha,\delta}
        y\prod_{g<p\le x}\left(1-\frac{\rho_Q(p)}p\right)
        \mathfrak B_B(Q;x),
\end{equation}
where
\[
        \mathfrak B_B(Q;x)
        =\sum_{a\le x}\tauD(a)^B\frac{\breve\rho_Q(a)}{a\kappa(a)}.
\]
Here \(\breve\rho_Q\) is the corrected one-polynomial congruence-counting function from
Henriot's erratum.  The constants depend only on \(B,g,\alpha,\delta\) and on the fixed
parameters in Henriot's theorem, not on \(Q,x,y\).
\end{proposition}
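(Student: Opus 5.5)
This statement is a specialization of an external theorem rather than something proved from scratch, so the plan is to derive it from Henriot's corrected Nair--Tenenbaum theorem \cite{Henriot,HenriotErratum}, instantiated with one polynomial and the function \(F(n)=\tauD(n)^B\). The first step is to check that \(F\) belongs to the admissible class of nonnegative submultiplicative functions. For \(B\ge1\), \(\tauD(mn)^B\le\tauD(m)^B\tauD(n)^B\) for all \(m,n\), which is stronger than the submultiplicativity required after the erratum's repair. The growth conditions also hold: \(\tauD(p^\nu)^B=(\nu+1)^B\le A_1^\nu\) with \(A_1=2^B\), and \(\tauD(n)^B\ll_{\varepsilon}n^{\varepsilon}\) by \eqref{eq:max-x-o}. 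All constants therefore depend only on \(B\).

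The second step is to set the parameters of the general theorem to \(r=1\), \(Q_1=Q\), and \(\mathbf R\) trivial. The erratum's property \((P_{a_1,\ldots,a_r,n})\) then has no cross-factor part. Its corrected congruence count \(\breve\rho_Q\) is taken modulo \(a\kappa(a)\), and its exceptional primes are those dividing \(a^*D^*\). The hypotheses on \(Q\) (primitive, irreducible, no fixed prime divisor) and the range \(x\ge C_0\|Q\|^\delta\), \(x^\alpha<y\le x\) are exactly those of the corrected Corollary. Reading off its conclusion with \(\tauD(|Q(n)|)^B\) in the summand gives a sieve factor \(\prod_{p\le x}(1-\rho_Q(p)/p)\) times a sum over \(a\le x\) of \(F(a)\breve\rho_Q(a)/(a\kappa(a))\), which is \(\mathfrak B_B(Q;x)\).

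The third step handles the primes \(p\le g\). Because \(Q\) has no fixed prime divisor, \(\rho_Q(p)<p\), so each factor \(1-\rho_Q(p)/p\) with \(p\le g\) is at least \(1/p\ge1/g\). Removing these finitely many factors from the product changes it only by a constant depending on \(g\), which yields the product over \(g<p\le x\) in \eqref{eq:henriot-shiu-one-variable}. The paper also needs the normalized comparison \(\breve\rho_Q(a)/(a\kappa(a))\le\rho_Q(a)/a\) later, and the plan is to verify it by the projection and lifting count recorded before the statement. It is not needed for the proposition itself.

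The main obstacle is the bookkeeping against the erratum. One must confirm that, in the one-polynomial case, the corrected modulus, the replacement of \(D^*\) by \(a^*D^*\), and the repaired submultiplicativity hypothesis really reduce to the displayed form, with no hidden dependence on \(\|Q\|\) beyond the range condition. In particular, one must confirm that uniformity in \(Q\) survives with constants depending only on \(B,g,\alpha,\delta\). I would check this clause by clause against the statement of the corrected Corollary 2 and cite it, rather than reprove the sieve argument.
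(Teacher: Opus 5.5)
Your proposal is correct and follows essentially the same route as the paper. Both cite the erratum-corrected one-polynomial case (\(k=r=1\)) of Henriot's Corollary 2 with \(F=\tauD^B\), after checking that \(\tauD^B\) is nonnegative, multiplicative (indeed submultiplicative) and in the class \(\mathcal M_1\), with \(F(p^\nu)\le 2^{B\nu}\) and \(F(n)\ll_{B,\varepsilon}n^\varepsilon\). Your extra step on the primes \(p\le g\) is harmless but not needed in that direction, since deleting factors \(1-\rho_Q(p)/p\le1\) can only enlarge the right-hand side; and with \(r=1\) the tuple \(\mathbf R\) is \((Q)\) itself rather than ``trivial''.
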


\begin{proof}
This is the erratum-corrected one-polynomial upper-bound specialization of Henriot's
Corollary 2, obtained from New Theorem 5 after replacing \(\rho_{\mathbf R}\) by
\(\breve\rho_{\mathbf R}\), using the corrected modulus containing \(\kappa\), and replacing
\(D^*\) by \(a^*D^*\).  Applied with \(k=r=1\) and
\[
        F(n)=\tauD(n)^B,
\]
it gives \eqref{eq:henriot-shiu-one-variable}.  For each fixed real \(B\ge1\), this
function is nonnegative and multiplicative and satisfies the one-variable growth
requirements in Henriot's class \(\mathcal M_1\): for every fixed \(\varepsilon>0\),
\[
        \tauD(n)^B\ll_{B,\varepsilon} n^\varepsilon,
        \qquad
        F(p^\nu)=(\nu+1)^B\ll_{B,\varepsilon}(1+\varepsilon)^\nu .
\]
These are precisely the growth requirements used in the one-variable upper-bound form; no
lower-bound or non-vanishing hypothesis from New Theorem 6 is used.
\end{proof}

\begin{lemma}[Euler-product majorant for \(X^2-j\)]\label{lem:henriot-euler-majorant}
Let \(j\) be nonsquare and \(Q_j(X)=X^2-j\).  For every fixed real \(B\ge1\) there are
constants \(C_B,C_B'\), depending only on \(B\), such that uniformly for \(x\ge2\),
\[
        \mathfrak B_B(Q_j;x)\ll_B
        \mathfrak D_{C_B'}(j)(\log(2x))^{C_B}.
\]
In particular, no factor depending on the prime-power valuations \(v_p(j)\) is needed.
\end{lemma}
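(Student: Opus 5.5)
We bound \(\mathfrak B_B(Q_j;x)\) by replacing the corrected normalized count with the ordinary one and then estimating an Euler product. By the normalized comparison \(\breve\rho_Q(a)/(a\kappa(a))\le\rho_Q(a)/a\) recorded before \Cref{prop:henriot-erratum-one-polynomial}, extended to all \(a\) by multiplicativity, we first reduce to
\[
        \mathfrak B_B(Q_j;x)\le\sum_{a\le x}\tauD(a)^B\frac{\rho_j(a)}{a},
\]
with \(\rho_j\) as in \Cref{lem:root-bound}. The summand is a nonnegative multiplicative function. We then use Rankin's trick with exponent \(\sigma=1+1/\log(2x)\), together with \(a^{-1}\le e\,a^{-\sigma}\) for \(a\le x\), to bound the sum by \(e\) times the full Euler product \(\prod_p\sum_{\nu\ge0}(\nu+1)^B\rho_j(p^\nu)p^{-\nu\sigma}\).

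Next we split the primes into good primes \(p\nmid 2j\) and bad primes \(p\mid 2j\).

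\textbf{Good primes.} Here Hensel's lemma gives \(\rho_j(p^\nu)=1+\chi_j(p)\le2\) for every \(\nu\ge1\). The local factor is therefore
\[
        1+\frac{2^B(1+\chi_j(p))}{p^\sigma}+O(p^{-2}).
\]
Bounding \(1+\chi_j(p)\le2\) crudely, the product over good primes is at most \(\prod_p(1+2^{B+1}p^{-\sigma}+O(p^{-2}))\ll_B\zeta(\sigma)^{2^{B+1}}\ll(\log 2x)^{2^{B+1}}\). This gives \(C_B=2^{B+1}\), independently of \(j\).

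\textbf{Bad primes.} This is the main obstacle. The naive bound \(\rho_j(p^\nu)\ll p^{\min(\nu,v_p(j))/2}\) from \Cref{lem:root-bound} would create a factor like \(\prod_{p\mid j}p^{v_p(j)/2}\cdot p^{-\dots}\) that depends on the valuations. The statement explicitly forbids this, so a finer count is needed. Write \(\beta=v_p(j)\). For \(\nu\le\beta\), the solutions of \(x^2\equiv0\pmod{p^\nu}\) number \(p^{\lfloor\nu/2\rfloor}\), so the term is \((\nu+1)^Bp^{\lfloor\nu/2\rfloor-\nu\sigma}\le(\nu+1)^Bp^{-\lceil\nu/2\rceil}\). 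For \(\nu>\beta\), the count is zero unless \(\beta=2c\) is even, and in that case it is at most \(4p^{c}\), giving the term \(\ll(\nu+1)^Bp^{c-\nu}\le(\nu+1)^Bp^{-c-1}\cdot p^{-(\nu-2c-1)}\). In both ranges the contribution of \(\nu\ge1\) is \(\sum_\nu(\nu+1)^B p^{-\nu/2}\)-type geometric decay, whose total is \(\ll_B p^{-1/2}\cdot\) const for \(p\ge3\), and bounded for \(p=2\), uniformly in \(\beta\).

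This still leaves a factor \((1+A_Bp^{-1/2})\) per bad prime, which is not of the form \((1+1/p)^{C}\). To repair this, we observe more carefully that the \(\nu=1\) term is \(2^B\rho_j(p)/p\le2^B/p\), since \(\rho_j(p)\le1\) for \(p\mid j\). The \(\nu=2\) term is \(3^B\rho_j(p^2)/p^2\le3^B p/p^2\), and in general \(\rho_j(p^\nu)\le 4p^{\lfloor\nu/2\rfloor}\), so the terms are \(\ll(\nu+1)^Bp^{\lfloor\nu/2\rfloor-\nu}\le(\nu+1)^Bp^{-\lceil\nu/2\rceil}\). Summing over \(\nu\ge1\) then gives \(\ll_B 1/p\) for all \(p\), uniformly in \(\beta\). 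Hence each bad factor is \(\le1+A_B/p\le(1+1/p)^{C_B'}\) with \(C_B'\) large depending only on \(B\); the prime \(2\) is handled by the same bound, at worst enlarging the constant. Multiplying over \(p\mid2j\) yields \(\mathfrak D_{C_B'}(j)\). Combining this with the good-prime bound proves the lemma.
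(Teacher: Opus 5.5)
Your argument is correct and follows the paper's proof. You reduce via the normalized comparison to \(\sum_{a\le x}\tau(a)^B\rho_j(a)/a\), use Hensel at \(p\nmid 2j\) to get local factors \(1+O_B(1/p)\), and use \(\rho_j(p^\nu)/p^\nu\ll p^{-\lceil\nu/2\rceil}\) at \(p\mid 2j\) to get \(1+O_B(1/p)\) uniformly in \(v_p(j)\); the only difference is cosmetic, since you use Rankin's trick with \(\sigma=1+1/\log(2x)\) and \(\zeta(\sigma)\ll\log(2x)\) where the paper simply bounds the truncated sum by the Euler product over \(p\le x\) and applies Mertens (your intermediate \(p^{-1/2}\) detour can be dropped).
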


\begin{proof}
The function \(\breve\rho_Q\) is multiplicative in the corrected theorem, so the truncated
sum defining \(\mathfrak B_B\) is bounded by the corresponding Euler product over
\(p\le x\).  At every prime we use only the normalized comparison
\[
        \frac{\breve\rho_{Q_j}(p^\nu)}{p^{\nu+1}}
        \le \frac{\rho_{Q_j}(p^\nu)}{p^\nu}.
\]
If \(p\nmid2j\), all roots modulo \(p\) are simple, and Hensel lifting gives
\(\rho_{Q_j}(p^\nu)=\rho_{Q_j}(p)\le2\) for every \(\nu\ge1\).  Hence the good-prime local
factor is
\[
        1+\rho_{Q_j}(p)\sum_{\nu\ge1}\frac{(\nu+1)^B}{p^\nu}
        \le 1+C_B\frac{\rho_{Q_j}(p)}p,
\]
and the product of these factors over good primes is \(\ll_B(\log(2x))^{C_B}\), since
\(\rho_{Q_j}(p)\le2\) and \(\sum_{p\le x}p^{-1}\ll\log\log(2x)\).

It remains to bound the local factors at primes \(p\mid2j\).  The prime \(2\) contributes
only a constant depending on \(B\).  Let \(p\) be an odd prime dividing \(j\), and write
\(\beta=v_p(j)\).  If \(x^2\equiv j\pmod {p^\nu}\), then either \(\beta\ge\nu\), in which
case the number of solutions is at most \(p^{\lfloor\nu/2\rfloor}\), or
\(\beta=2c<\nu\), in which case \(x=p^c y\) and the remaining unit quadratic congruence
has at most two root classes, giving at most \(2p^c\) solutions modulo \(p^\nu\); for odd
\(\beta<\nu\) there are no solutions.  Therefore
\[
        \frac{\rho_{Q_j}(p^\nu)}{p^\nu}\ll p^{-\lceil\nu/2\rceil},
\]
and the bad-prime local factor is at most \(1+C_B/p\).  Multiplying over \(p\mid2j\)
gives the factor \(\mathfrak D_{C_B'}(j)\), after increasing \(C_B'\) if necessary.
\end{proof}

We apply \Cref{prop:henriot-erratum-one-polynomial} below and then restrict to the positive subrange \(Q_j(k)>0\).

\begin{lemma}[Visible first moment for nonsquare shifts]\label{lem:visible-first-nonsquare}
Uniformly for nonsquare \(j\ge1\), \(M\ge2\), and intervals \(M<k\le2M\),
\[
        \sum_{\substack{M<k\le2M\\ k^2>j}}
        \tauD(k^2-j)
        \ll M\log(2M)\,\mathfrak D_C(j)L_{2M}(1,\chi_j),
\]
with an absolute constant \(C\).
\end{lemma}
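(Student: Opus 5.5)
The plan is to apply \Cref{prop:henriot-erratum-one-polynomial} with \(B=1\) to \(Q_j(X)=X^2-j\). The key point is not to use the crude majorant of \Cref{lem:henriot-euler-majorant}. Instead I would keep the good-prime local factors exactly to first order, so that the sieve product \(\prod(1-\rho/p)\) cancels one half of \(\mathfrak B_1\) and leaves \(\log\cdot L(1,\chi_j)\).

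\emph{Step 1: reduction and parameters.} If \(j\ge 4M^2\) the sum is empty, so assume \(j<4M^2\); then \(\|Q_j\|\asymp j\ll M^2\). Bounded \(M\) is trivial, since then \(j\ll1\) and the sum is \(O(1)\). Dropping the restriction \(k^2>j\) only adds nonnegative terms \(\tauD(|k^2-j|)\), so it suffices to bound \(\sum_{M<n\le2M}\tauD(|Q_j(n)|)\). I take \(x=y=M\), \(\alpha=1/2\) and \(\delta=1/4\). Then \(C_0\|Q_j\|^{1/4}\ll M^{1/2}\le x\) for \(M\) large, and \(Q_j\) satisfies the hypotheses: it is primitive, irreducible, monic and has no fixed prime divisor. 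This gives
\[
        \sum_{M<n\le2M}\tauD(|Q_j(n)|)\ll M\prod_{2<p\le M}\Bigl(1-\frac{\rho_{Q_j}(p)}p\Bigr)\mathfrak B_1(Q_j;M).
\]

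\emph{Step 2: Euler factorization of \(\mathfrak B_1\).} Using multiplicativity and the normalized comparison \(\breve\rho(p^\nu)/p^{\nu+1}\le\rho(p^\nu)/p^\nu\), I bound \(\mathfrak B_1\) by \(\prod_{p\le M}\bigl(1+\sum_{\nu\ge1}(\nu+1)\rho_{Q_j}(p^\nu)p^{-\nu}\bigr)\).

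For \(p\nmid 2j\), Hensel lifting gives \(\rho(p^\nu)=\rho(p)=1+\chi_j(p)\). The local factor is therefore \(1+2\rho(p)/p+O(p^{-2})\). Multiplied by \(1-\rho(p)/p\), it becomes \(1+\rho(p)/p+O(p^{-2})=(1+1/p)(1+\chi_j(p)/p)+O(p^{-2})\). The product over good \(p\le M\) is then
\[
        \ll \prod_{p\le M}\Bigl(1+\frac1p\Bigr)\prod_{\substack{p\le M\\ p\nmid 2j}}\Bigl(1-\frac{\chi_j(p)}p\Bigr)^{-1}\ll \log(2M)\,L_M(1,\chi_j)\prod_{p\mid 2j}\Bigl(1+\frac1p\Bigr).
\]
Here the last factor comes from reinserting the omitted local factors \(1-\chi_j(p)/p\) at \(p\mid 2j\), each of which is at most \(1+1/p\).

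For \(p\mid 2j\), the proof of \Cref{lem:henriot-euler-majorant} gives local factor \(\le 1+C/p\), and the sieve factor \(1-\rho(p)/p\) is at most \(1\). Together these contribute \(\mathfrak D_C(j)\) for an absolute \(C\). Finally, \(L_M(1,\chi_j)\asymp L_{2M}(1,\chi_j)\), because the primes in \((M,2M]\) contribute \(\prod(1\pm1/p)^{\pm1}\asymp1\) by Mertens.

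\emph{Main obstacle.} The delicate step is Step 2 at the good primes. One has to confirm that the corrected \(\breve\rho\) normalization still gives a first-order local coefficient of exactly \(2\rho(p)/p\), not something larger, since only this cancellation against the sieve factor produces a single \(\log\) instead of \((\log)^2\). I would first check this via the stated normalized comparison, which bounds the corrected term at \(p^\nu\) by \(\rho(p^\nu)/p^\nu\). I would also check that the \(O(p^{-2})\) errors, together with the uniformity in \(j\), stay absolute, using \(\rho(p)\le2\).
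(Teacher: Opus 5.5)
Your proof is correct, but it takes a different and heavier route than the paper. The paper does not use \Cref{prop:henriot-erratum-one-polynomial} for this lemma at all. Since \(k\le 2M\) forces \(k^2-j<4M^2\), divisor pairing gives \(\tauD(k^2-j)\le 2\#\{d\le 2M: d\mid k^2-j\}\). Every such modulus is at most the interval scale, so the residue-class count \((M/d+1)\rho_j(d)\) is \(\ll M\rho_j(d)/d\). This reduces the sum to \(M\sum_{d\le 2M}\rho_j(d)/d\), whose good-prime Euler factor is exactly \((1+\chi_j(p)/p)/(1-1/p)\). The single logarithm and the factor \(L_{2M}(1,\chi_j)\) therefore appear directly, with no sieve product to cancel.

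In your route the same shape emerges only after the Nair--Tenenbaum sieve factor \(\prod(1-\rho(p)/p)\) cancels half of the first-order coefficient \(2\rho(p)/p\) of \(\mathfrak B_1\). Your bookkeeping for this is sound:
\begin{itemize}
\item the \(O(p^{-2})\) errors are uniform because \(\rho(p)\le 2\);
\item the lower bound \((1+1/p)(1+\chi_j(p)/p)\ge 1-p^{-2}\) lets you factor those errors out;
\item the bad-prime factors are correctly reinserted into \(\mathfrak D_C(j)\);
\item and \(L_M\asymp L_{2M}\) holds by Mertens.
\end{itemize}
Your ``main obstacle'' is settled at once by the stated normalized comparison, which already bounds the first-order coefficient by \(2\rho(p)/p\).

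The trade-off is this. The paper's argument keeps part (b) of \Cref{thm:shifted-square-estimate}, which is the nonsquare input for the conditional two-logarithm refinement, elementary and independent of the erratum-corrected Henriot theorem and its \(\breve\rho\) normalization. It does, however, rely on the quadratic coincidence that \(\sqrt{k^2-j}<2M\) is comparable to the interval length. Your argument imports the external theorem, but it would survive where divisor pairing breaks down, for instance for shorter intervals \(y\ge x^\alpha\) or for polynomials of higher degree.
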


\begin{proof}
For \(k\le2M\) and \(k^2>j\), divisor pairing gives
\[
        \tauD(k^2-j)
        \le 2\sum_{\substack{d\le2M\\d\mid k^2-j}}1.
\]
Hence, with \(\rho_j(d)=\#\{x\bmod d:x^2\equiv j\pmod d\}\),
\[
\begin{aligned}
        \sum_{\substack{M<k\le2M\\ k^2>j}}\tauD(k^2-j)
        &\ll \sum_{d\le2M}\left(\frac Md+1\right)\rho_j(d)        \\
        &\ll M\sum_{d\le2M}\frac{\rho_j(d)}d,
\end{aligned}
\]
because \(d\le2M\) implies \(M/d+1\ll M/d\).

The function \(\rho_j(d)\) is multiplicative in \(d\).  If \(p\nmid2j\), Hensel lifting gives
\(\rho_j(p^\nu)=1+\chi_j(p)\) for every \(\nu\ge1\), whence
\[
        1+\sum_{\nu\ge1}\frac{\rho_j(p^\nu)}{p^\nu}
        =\frac{1+\chi_j(p)/p}{1-1/p}.
\]
For an odd prime \(p\mid j\), write \(\beta=v_p(j)\).  If \(x^2\equiv j\pmod {p^\nu}\), then either \(\beta\ge\nu\), in which case the number of solutions is at most \(p^{\lfloor\nu/2\rfloor}\), or \(\beta=2c<\nu\), in which case \(x=p^c y\) and the remaining unit quadratic congruence has at most two root classes, giving at most \(2p^c\) solutions modulo \(p^\nu\); for odd \(\beta<\nu\) there are no solutions.  In all cases
\[
        \frac{\rho_j(p^\nu)}{p^\nu}\ll p^{-\lceil\nu/2\rceil},
        \qquad
        1+\sum_{\nu\ge1}\frac{\rho_j(p^\nu)}{p^\nu}\le 1+\frac{C}{p}.
\]
After increasing \(C\), the prime \(2\) is absorbed into the same fixed constant.
Therefore
\[
\begin{aligned}
        \sum_{d\le2M}\frac{\rho_j(d)}d
        &\le \prod_{p\le2M}\left(1+\sum_{\nu\ge1}\frac{\rho_j(p^\nu)}{p^\nu}\right) \\
        &\ll \mathfrak D_C(j)
        \prod_{\substack{p\le2M\\p\nmid2j}}
        \frac{1+\chi_j(p)/p}{1-1/p}.
\end{aligned}
\]
For \(\chi_j(p)=\pm1\),
\[
        1+\frac{\chi_j(p)}p
        =\left(1-\frac1{p^2}\right)
         \left(1-\frac{\chi_j(p)}p\right)^{-1}.
\]
Mertens' theorem and convergence of \(
\prod_p(1-p^{-2})\) now give
\[
        \sum_{d\le2M}\frac{\rho_j(d)}d
        \ll \mathfrak D_C(j)\log(2M)L_{2M}(1,\chi_j).
\]
This proves the lemma.
\end{proof}

\begin{theorem}[Shifted-square divisor estimate \(\HST\)]\label{thm:shifted-square-estimate}
Let \(B\ge1\) be fixed.  There are constants \(C_B,C_B'\), depending only on the fixed real exponent \(B\),
with the following properties.
\begin{enumerate}[label=\textup{(\alph*)},leftmargin=*]
\item If \(j\) is not a square, \(M\ge2\), and the dyadic interval
\(M<k\le2M\) contains a point with \(k^2>j\), then
\[
        \sum_{\substack{M<k\le2M\\ k^2>j}}
        \tauD(k^2-j)^B
        \ll_B M(\log(2M))^{C_B}\mathfrak D_{C_B'}(j).
\]
\item In the case \(B=1\), the good-prime Euler factor may be kept visible:
\[
        \sum_{\substack{M<k\le2M\\ k^2>j}}
        \tauD(k^2-j)
        \ll M\log(2M)\mathfrak D_{C_1'}(j)L_{2M}(1,\chi_j).
\]
\end{enumerate}
\end{theorem}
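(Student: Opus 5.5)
Part (b) is exactly \Cref{lem:visible-first-nonsquare}, with \(C_1'\) taken to be the absolute constant \(C\) appearing there. So the real content is part (a). I would prove (a) by applying \Cref{prop:henriot-erratum-one-polynomial} to \(Q_j(X)=X^2-j\) and then bounding the resulting Euler product with \Cref{lem:henriot-euler-majorant}.

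\textbf{Checking the hypotheses.} For nonsquare \(j\), the polynomial \(Q_j\) is primitive, irreducible and monic of degree \(g=2\), with no fixed prime divisor. This was recorded before the proposition: at \(p=2\), take \(x\) with \(x\not\equiv j\bmod 2\); at odd \(p\), \(X^2-j\) has at most two roots among \(p\ge3\) residues. Take \(B\) fixed, \(g=2\) and \(\alpha=1/2\). The height is \(\|Q_j\|=\max(1,j)\).

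\textbf{Choosing the interval.} Take \(x=M\) and \(y=M\), so the sum runs over \(M<k\le2M\). The condition \(x\ge C_0\|Q\|^\delta\) requires \(j\ll M^{1/\delta}\), so it is here that the positivity range matters.
\begin{itemize}
\item If \(j\le 4M^2\), choose \(\delta=1/2\). Then \(C_0 j^{1/2}\le 2C_0M\), and this is \(\le x\) once we either enlarge \(M\) beyond a constant or replace \(x\) by \(x/(2C_0)\) and cover \((M,2M]\) by \(O(1)\) subintervals of length \(\asymp M\) (which still satisfies \(y>x^\alpha\)). Small \(M\) is trivial because \(\tauD\ll_\varepsilon n^\varepsilon\).
\item If \(j>4M^2\), no \(k\le 2M\) has \(k^2>j\), so the hypothesis of (a) excludes this case.
\end{itemize}
Since \(k^2>j\) on the restricted range, \(\tauD(k^2-j)^B=\tauD(|Q_j(k)|)^B\). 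Dropping the restriction only enlarges the sum, except at roots \(k^2=j\), which do not occur because \(j\) is nonsquare.

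\textbf{Bounding the output.} The proposition gives the bound
\[
M\prod_{2<p\le M}\bigl(1-\rho_{Q_j}(p)/p\bigr)\,\mathfrak B_B(Q_j;M).
\]
The sieve product is at most \(1\) and can be discarded. \Cref{lem:henriot-euler-majorant} gives \(\mathfrak B_B(Q_j;M)\ll_B \mathfrak D_{C_B'}(j)(\log 2M)^{C_B}\). Together these yield (a).

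\textbf{Main obstacle.} The delicate step is verifying the uniformity hypothesis \(x\ge C_0\|Q\|^\delta\) with a fixed \(\delta\). This relies on the observation that \(k^2>j\) for some \(k\le 2M\) forces \(j<4M^2\), so that \(\delta=1/2\) works uniformly. I would also keep the bookkeeping of the finitely many subintervals needed when \(C_0>1/2\) explicit, so that all implied constants depend only on \(B\).
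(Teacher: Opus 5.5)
Your route is the paper's: part (b) is \Cref{lem:visible-first-nonsquare}, and part (a) applies \Cref{prop:henriot-erratum-one-polynomial} to \(Q_j\) with \(x=y=M\) and \(\alpha=1/2\), then uses \Cref{lem:henriot-euler-majorant} and discards the sieve product. The one step that fails as written is the one you single out as delicate.

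\textbf{The choice \(\delta=1/2\) does not work.} When \(j\) is close to \(4M^2\), the hypothesis \(x\ge C_0\|Q_j\|^\delta\) at \(x=M\) becomes roughly \(M\ge 2C_0M\). This is false for every \(M\) once \(C_0>1/2\). Enlarging \(M\) does nothing, because both sides are linear in \(M\). Your alternative also fails. The hypothesis is a \emph{lower} bound on the left endpoint \(x\) of the summation interval, so replacing \(x\) by \(x/(2C_0)\) moves in the wrong direction. Subdividing \((M,2M]\) cannot help either: the subintervals adjacent to \(M\) still start near \(M<2C_0M\). So the observation \(j<4M^2\) does not, on its own, make \(\delta=1/2\) work uniformly.

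\textbf{The repair.} Use the freedom in \(\delta\). The proposition is uniform for any fixed \(0<\delta\le1\), and any \(\delta<1/2\) suffices. The paper takes \(\delta=1/4\), which gives
\(C_0\|Q_j\|^{1/4}\le C_0(4M^2+1)^{1/4}\ll M^{1/2}\).
This is at most \(M\) once \(M\) exceeds a constant depending only on the fixed Henriot parameters. For the remaining bounded range of \(M\), the conditions \(k\le2M\) and \(j<4M^2\) leave only boundedly many terms, which are absorbed into the implied constant.

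With this change the rest of your argument goes through as in the paper. That includes positivity on \(k^2>j\), enlarging to the full interval, the bound \(\mathfrak B_B(Q_j;M)\ll_B\mathfrak D_{C_B'}(j)(\log 2M)^{C_B}\), and the sieve product being at most \(1\).
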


\begin{proof}
Part (b) is \Cref{lem:visible-first-nonsquare}.  For part (a), put
\(Q_j(X)=X^2-j\).  Because \(j\) is nonsquare, \(Q_j\) is primitive, monic and irreducible over
\(\mathbb Z\); it has degree \(2\), leading coefficient \(1\), and discriminant \(4j\).  It also has no fixed prime
divisor: for an odd prime \(p\), the values \(-j\) and \(1-j\) cannot both vanish
modulo \(p\), and for \(p=2\) one of \(Q_j(0)\) and \(Q_j(1)\) is odd.

Apply \Cref{prop:henriot-erratum-one-polynomial} to \(Q_j\) and the fixed exponent \(B\), with \(x=M\), \(y=M\), fixed
\(\alpha=1/2\), and fixed \(\delta=1/4\).  The condition \(y>x^\alpha\) is immediate.
If the interval contains a point with \(k^2>j\), then \(j<4M^2\), while
\(\|Q_j\|=j+1\).  Hence \(M\ge C_0\|Q_j\|^\delta\) holds for all sufficiently large \(M\)
with this fixed \(\delta\), once \(M\) exceeds a constant depending only on the fixed
Henriot parameters.  If \(M\) is below that constant, then \(j<4M^2\) leaves only
boundedly many possible shifts, and their contribution is absorbed into the implied
constant.  The imported theorem is applied to
\(\tauD(|Q_j(k)|)^B\), and restricting to the subrange \(k^2>j\) only decreases the sum.

By \Cref{lem:henriot-euler-majorant},
\(\mathfrak B_B(Q_j;M)\ll_B\mathfrak D_{C_B'}(j)(\log(2M))^{C_B}\).  The product
\(\prod_{2<p\le M}(1-\rho_j(p)/p)\) is nonnegative and at most \(1\).  Substitution in \eqref{eq:henriot-shiu-one-variable} proves part (a), after enlarging the logarithmic exponent \(C_B\) if necessary.
\end{proof}

\begin{lemma}[Standard divisor moments]\label{lem:standard-divisor-moments}
For every fixed positive integer \(A\),
\[
        \sum_{n\le X}\tauD(n)^A \ll_A X(\log(2X))^{2^A-1}.
\]
Consequently, for every fixed real \(A\ge1\), there is a constant \(C_A\) such that
\[
        \sum_{n\le X}\tauD(n)^A \ll_A X(\log(2X))^{C_A}.
\]
\end{lemma}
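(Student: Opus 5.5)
We need to prove \(\sum_{n\le X}\tauD(n)^A\ll_A X(\log 2X)^{2^A-1}\) for each fixed positive integer \(A\), and then deduce the real-exponent version.

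\emph{Multiplicative comparison.} Write \(\tauD(n)^A=\sum_{d\mid n}h_A(d)\), with \(h_A=\tauD^A*\mu\) multiplicative. Then \(h_A(p)=2^A-1\), and for \(\nu\ge1\)
\[
        h_A(p^\nu)=(\nu+1)^A-\nu^A\ge0,
        \qquad
        h_A(p^\nu)\le A(\nu+1)^{A-1}.
\]
Interchanging summation gives
\[
        \sum_{n\le X}\tauD(n)^A=\sum_{d\le X}h_A(d)\lfloor X/d\rfloor\le X\sum_{d\le X}\frac{h_A(d)}{d}.
\]
Since \(h_A\ge0\) is multiplicative, the last sum is at most the Euler product over \(p\le X\):
\[
        \prod_{p\le X}\Bigl(1+\frac{2^A-1}{p}+\sum_{\nu\ge2}\frac{A(\nu+1)^{A-1}}{p^\nu}\Bigr).
\]

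\emph{Bounding the product.} The tail over \(\nu\ge2\) is \(\ll_A p^{-2}\) uniformly in \(p\). Hence each local factor is at most
\[
        \Bigl(1+\frac1p\Bigr)^{2^A-1}\Bigl(1+\frac{C_A}{p^2}\Bigr).
\]
The factors \(1+C_A/p^2\) have a convergent product, and Mertens' theorem gives \(\prod_{p\le X}(1+1/p)\ll\log(2X)\). Together these yield \(X(\log 2X)^{2^A-1}\).

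The one point needing care is the bound \(h_A(p^\nu)\le A(\nu+1)^{A-1}\). It follows from the mean value theorem applied to \(t\mapsto t^A\), so I expect no real obstacle.

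\emph{Real exponents.} For real \(A\ge1\), put \(A'=\lceil A\rceil\). Since \(\tauD(n)\ge1\), we have \(\tauD(n)^A\le\tauD(n)^{A'}\), and the integer case gives the claim with \(C_A=2^{\lceil A\rceil}-1\).
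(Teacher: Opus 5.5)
Your proof is correct, but it takes a different route from the paper. The paper does not prove the integer case itself. It cites the Selberg--Delange method (Tenenbaum, Ch.~II.5), which gives the asymptotic \(\sum_{n\le X}\tau(n)^A\sim c_AX(\log X)^{2^A-1}\), and then reduces real \(A\) to \(\lceil A\rceil\) exactly as you do.

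You instead give a self-contained elementary upper bound:
\begin{enumerate}[label=\textup{(\arabic*)},leftmargin=*]
\item write \(\tau^A=h_A*1\) with \(h_A=\tau^A*\mu\ge0\);
\item bound the sum by \(X\sum_{d\le X}h_A(d)/d\) and majorize this by the Euler product over \(p\le X\);
\item finish with Bernoulli's inequality \((1+1/p)^{2^A-1}\ge1+(2^A-1)/p\), which is the step your local-factor bound uses without stating it, together with Mertens' theorem.
\end{enumerate}

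The citation buys more than is needed: a true asymptotic, hence sharpness of the exponent. Your argument uses only Mertens and delivers exactly the upper bound the paper actually consumes in \Cref{prop:square-shift-estimates}.

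One further point in your favour: nothing in your argument uses that \(A\) is an integer. For real \(A\ge1\) you still have \(h_A(p)=2^A-1\ge1\), the bound \(0\le h_A(p^\nu)\le A(\nu+1)^{A-1}\) from the mean value theorem, and Bernoulli's inequality with exponent \(2^A-1\ge1\). So you obtain the exponent \(2^A-1\) directly for every real \(A\ge1\). This is sharper than the \(2^{\lceil A\rceil}-1\) produced by the ceiling reduction that both your write-up and the paper use.
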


\begin{proof}
For integer \(A\), this is the standard Selberg--Delange estimate for divisor moments; see, for example, \cite[Ch. II.5]{Tenenbaum}.  The weaker polylogarithmic form also follows from standard mean-value estimates for multiplicative functions.  For real \(A\), replace \(A\) by \(\lceil A\rceil\), since
\(\tauD(n)^A\le \tauD(n)^{\lceil A\rceil}\).
\end{proof}

\begin{proposition}[Square-shift estimates]\label{prop:square-shift-estimates}
For every fixed real \(B\ge1\) there are constants \(C_B,C_B'\), depending only on \(B\),
such that for every \(a\ge1\) and \(M\ge2\),
\[
        \sum_{\substack{M<k\le2M\\ k>a}}
        \tauD(k^2-a^2)^B
        \ll_B M(\log(2M))^{C_B}\mathfrak D_{C_B'}(a).
\]
For \(B=1\) one has the sharper form
\[
        \sum_{\substack{M<k\le2M\\ k>a}}
        \tauD(k^2-a^2)
        \ll M(\log(2M))^2\mathfrak D_{C_1'}(a).
\]
The estimates include the short right-hand tail \(M<a<k\le2M\), and the zero
\(k=a\) is excluded by the condition \(k>a\).
\end{proposition}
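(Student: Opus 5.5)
We plan to use the factorization \(k^2-a^2=(k-a)(k+a)\) together with submultiplicativity \(\tauD(mn)\le\tauD(m)\tauD(n)\).  For \(k>a\) both factors are positive integers, and this gives
\[
        \tauD(k^2-a^2)^B\le \tauD(k-a)^B\tauD(k+a)^B\le \tfrac12\bigl(\tauD(k-a)^{2B}+\tauD(k+a)^{2B}\bigr).
\]
The condition \(k>a\) is what makes this legitimate: it excludes the zero at \(k=a\), and both factors are at least \(1\).

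For the general exponent we sum over \(M<k\le2M\), \(k>a\).  As \(k\) ranges over this set, \(n=k-a\) runs over at most \(M\) distinct positive integers, all at most \(2M\).  Likewise \(n=k+a\) runs over at most \(M\) distinct integers in \((a+M,\,a+2M]\).  The first family is bounded by \(\sum_{n\le2M}\tauD(n)^{2B}\ll M(\log 2M)^{C}\) using \Cref{lem:standard-divisor-moments}.

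For the second family the sum is over a short interval of length \(M\) ending at \(a+2M\).  Since \(k>a\) and \(k\le2M\) force \(a<2M\), we have \(a+2M<4M\).  So it is also bounded by \(\sum_{n\le4M}\tauD(n)^{2B}\ll M(\log 4M)^{C_{2B}}\).  This covers the short right-hand tail \(M<a<k\le2M\) without separate treatment.  Hence the bound holds even with \(\mathfrak D_{C_B'}(a)\) replaced by \(1\), and since \(\mathfrak D_{C}(a)\ge1\) the stated form follows.

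For \(B=1\) the Cauchy-type splitting gives only \((\log M)^3\), so a sharper argument is needed, and this is the main obstacle.  We would instead repeat the argument of \Cref{lem:visible-first-nonsquare}: by divisor pairing, \(\tauD(k^2-a^2)\le2\#\{d\le2M: d\mid k^2-a^2\}\), because \(k^2-a^2<4M^2\).  Counting \(k\) in the interval with \(k^2\equiv a^2\pmod d\) gives \(\ll M\sum_{d\le2M}\rho_{a^2}(d)/d\).

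It then remains to bound the local factors.
\begin{itemize}
\item For \(p\nmid 2a\), \(\rho_{a^2}(p^\nu)=2\), so the local factor is \((1+1/p)/(1-1/p)\).  The product of these over \(p\le2M\) is \(\ll(\log 2M)^2\) by Mertens' theorem.
\item For \(p\mid 2a\), the root count for \(x^2\equiv a^2\pmod{p^\nu}\) can be as large as \(p^{\lfloor\nu/2\rfloor}\) times a constant when \(\nu\le2v_p(a)\), so \(\rho(p^\nu)/p^\nu\ll p^{-\lceil\nu/2\rceil}\) as in \Cref{lem:root-bound}.  The local factor is therefore \(1+O(1/p)\), and the product over these primes is absorbed into \(\mathfrak D_{C_1'}(a)\).
\end{itemize}
The only care needed is that the bad-prime factor \(1+O(1/p)\) is uniform in \(v_p(a)\).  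This holds because the series \(\sum_\nu p^{-\lceil\nu/2\rceil}\) converges uniformly.
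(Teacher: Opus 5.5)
Your proof is correct. For general \(B\) it is the paper's argument. The paper also writes \(k^2-a^2=h(h+2a)\) with \(h=k-a\), uses submultiplicativity, and applies Cauchy's inequality (your AM--GM step is equivalent) together with \Cref{lem:standard-divisor-moments}, using \(a<2M\).

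For the sharper \(B=1\) bound you take a different route. The paper keeps the factorization and proves an elementary additive-divisor estimate
\[
        \sum_{h\le X}\tauD(h)\tauD(h+\ell)\ll X(\log(2X))^2\sigma_{-1}(\ell)\qquad(1\le\ell\le2X).
\]
It applies divisor pairing separately to \(h\) and \(h+\ell\) and counts solutions of the linear congruences \(h\equiv0\pmod d\), \(h\equiv-\ell\pmod e\). The dependence on \(a\) enters only through the solubility condition \((d,e)\mid\ell=2a\), which produces \(\sigma_{-1}(2a)\ll\mathfrak D_{C}(a)\).

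You instead apply divisor pairing to \(k^2-a^2\) as a single integer and rerun the Euler-product computation of \Cref{lem:visible-first-nonsquare} with \(j=a^2\). What your route buys is conceptual uniformity. The good-prime factor \((1+1/p)/(1-1/p)\) is exactly that lemma's factor \((1+\chi_j(p)/p)/(1-1/p)\) with the trivial character. So the second logarithm is the truncated Euler product of the trivial character, of size \(\asymp\log M\), and part (b) of \Cref{thm:shifted-square-estimate} extends verbatim to square shifts. What the paper's route buys is that it never solves a quadratic congruence modulo \(p^\nu\) with \(p\mid a\); only linear congruences appear.

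On that bad-prime count, one point of care. You need the sharp values from the case analysis: for odd \(p\) and \(c=v_p(a)\),
\[
        \rho_{a^2}(p^\nu)=p^{\lfloor\nu/2\rfloor}\quad(\nu\le2c),
        \qquad
        \rho_{a^2}(p^\nu)=2p^{c}\quad(\nu>2c).
\]
These are the values used in the proof of \Cref{lem:visible-first-nonsquare}. The stated form \(\rho_j(d)\ll2^{\omega(d)}(j,d)^{1/2}\) of \Cref{lem:root-bound} is not enough on its own. At \(\nu=1\) it gives only \(\rho(p)/p\ll p^{-1/2}\), and a product of factors \(1+O(p^{-1/2})\) over \(p\mid a\) is not absorbed by \(\mathfrak D_C(a)\). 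With the sharp values the local sum is at most \(4/(p-1)\), uniformly in \(v_p(a)\), exactly as you claim.
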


\begin{proof}
If \(a\ge2M\), the summation range is empty.  Hence we may assume \(a<2M\).  Write
\[
        k^2-a^2=(k-a)(k+a)=h(h+2a),\qquad h=k-a\ge1.
\]
For \(M<k\le2M\) and \(k>a\), we have \(1\le h\le2M-a<2M\) and \(h+2a\le6M\).
For fixed real \(B\ge1\),
\[
        \tauD(h(h+2a))^B\le \tauD(h)^B\tauD(h+2a)^B.
\]
By Cauchy's inequality and \Cref{lem:standard-divisor-moments}, we get, uniformly in \(a<2M\),
\[
\begin{aligned}
        \sum_{\substack{M<k\le2M\\ k>a}}\tauD(k^2-a^2)^B
        &\le \sum_{1\le h\le2M}\tauD(h)^B\tauD(h+2a)^B  \\
        &\ll_B M(\log(2M))^{C_B}.
\end{aligned}
\]
This is stronger than the displayed fixed-moment estimate, because
\(\mathfrak D_{C_B'}(a)\ge1\).

For the sharper first-moment estimate, use the following crude additive-divisor upper bound, sufficient for the present application and obtainable by the elementary divisor-pairing argument below:
\begin{equation}\label{eq:additive-divisor-bound}
        \sum_{h\le X}\tauD(h)\tauD(h+\ell)
        \ll X(\log(2X))^2\prod_{p\mid\ell}\left(1+\frac1p\right)^C
        \qquad(1\le\ell\le2X),
\end{equation}
with an absolute constant \(C\).  In the present application take \(X=2M\) and
\(\ell=2a\).  Since \(a<2M\), we have \(\ell=2a\le4M=2X\), so the range condition in
\eqref{eq:additive-divisor-bound} is satisfied.  The short tail \(M<a<k\le2M\) is the
initial interval \(1\le h\le2M-a\), and enlarging it to \(h\le2M\) only increases the
nonnegative sum.

For completeness we recall the proof of \eqref{eq:additive-divisor-bound}.  Divisor
pairing gives
\[
        \tauD(n)\le 2\sum_{\substack{d\mid n\\d\le\sqrt n}}1.
\]
The left side of \eqref{eq:additive-divisor-bound} is bounded by a constant times
\[
        \sum_{d\le \sqrt X}\sum_{e\le\sqrt{X+\ell}}
        \#\{h\le X:h\equiv0\pmod d,
        \ h\equiv-\ell\pmod e\}.
\]
The congruences are soluble only when \(g=(d,e)\) divides \(\ell\), and then the number
of \(h\) is at most \(X/[d,e]+1\).  Writing \(d=gr\), \(e=gs\) and bounding the coprime
condition away, the \(X/[d,e]\) contribution is
\[
        \ll X\sum_{g\mid\ell}\frac1g
        \left(\sum_{r\le\sqrt X}\frac1r\right)
        \left(\sum_{s\le\sqrt{X+\ell}}\frac1s\right)
        \ll X(\log(2X))^2\sigma_{-1}(\ell).
\]
The \(+1\) contribution is \(O(X)\), because \(d\le\sqrt X\), \(e\le\sqrt{X+\ell}\),
and \(\ell\le2X\).  Finally
\[
        \sigma_{-1}(\ell)=\prod_{p^\nu\parallel\ell}\left(1+\frac1p+\cdots+\frac1{p^\nu}\right)
        \ll\prod_{p\mid\ell}\left(1+\frac1p\right)^C.
\]
With \(\ell=2a\), the last product is absorbed by \(\mathfrak D_{C_1'}(a)\).
\end{proof}

We next isolate the Euler-product mean value needed for the two-logarithm refinement.  The cited real-character value-distribution literature motivates this input, but the precise weighted statement below is uniform both in the conductor range and in the truncation parameter \(M\).  We do not derive that full long-truncation form from the cited results here.  Therefore the full uniform statement is kept as an explicit unproved input.

\begin{hypothesis}[Quadratic Euler-product mean-value input \(\HQE\)]
\label{hyp:quadratic-euler-input}
Let \(\lambda\ge0\) and \(C\ge0\) be fixed.  Uniformly for \(Y\ge2\) and \(M\ge2\),
\[
        \sum_{\substack{2\le s\le Y\\ s\textup{ squarefree}}}
        \mathfrak D_C(s)L_M(1,\chi_s)^\lambda \ll_{\lambda,C} Y,
\]
where \(\chi_s\) is the primitive real quadratic character attached to
\(\mathbb Q(\sqrt s)\).  The exclusion of \(s=1\) is essential, since the principal
truncated product is \(\asymp\log M\).
\end{hypothesis}

\begin{remark}[Status of \(\HQE\)]\label{rem:HQE-status}
The input \(\HQE\) is motivated by the Granville--Soundararajan moment and short
Euler-product theory, but it is not a formal consequence of the cited theorems without an
additional uniform long-truncation argument.  The proof of the present paper uses \(\HQE\)
only to pass from the elementary three-logarithm first moment to the two-logarithm first
moment.  All fixed-moment and tail estimates use the proved shifted-square theorem
\(\HST\) and do not require \(\HQE\).  A version of this manuscript that removes \(\HQE\)
should add a sourced theorem controlling the long partial Euler-product tail uniformly in
both conductor and truncation.
\end{remark}

\begin{lemma}[Harmonic mean of quadratic Euler products]\label{lem:quadratic-euler-mean}
Assume \Cref{hyp:quadratic-euler-input}.  Uniformly for $2\le M\le X$ and every fixed $C\ge0$,
\[
        \sum_{\substack{1\le j\le X\\ j\textup{ not a square}}}
        \frac{\mathfrak D_C(j)L_M(1,\chi_j)}j
        \ll_C \log X .
\]
\end{lemma}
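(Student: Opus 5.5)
The plan is to reduce the sum over nonsquare \(j\) to the squarefree sum in \(\HQE\) and then use partial summation in \(Y\).  Write each nonsquare \(j\) uniquely as \(j=sf^2\), where \(s\ge2\) is squarefree.  Then \(\chi_j=\chi_s\), because \(\chi_j\) is by definition the primitive character attached to the squarefree kernel, and this kernel is \(s\).  Hence \(L_M(1,\chi_j)=L_M(1,\chi_s)\).

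For the weight, note that the primes dividing \(2j\) are those dividing \(2s\) together with those dividing \(f\).  This gives
\[
        \mathfrak D_C(j)\le \mathfrak D_C(s)\prod_{p\mid f}\Bigl(1+\frac1p\Bigr)^C\le \mathfrak D_C(s)\,\sigma_{-1}(f)^{C}\ll_C \mathfrak D_C(s)\,\tauD(f)^C .
\]
Only a bound of the form \(f^{o(1)}\) is needed here, and \(\tauD(f)^C\) suffices.  The sum to be estimated is therefore at most
\[
        \sum_{f\le\sqrt{X}}\frac{\tauD(f)^C}{f^2}
        \sum_{\substack{2\le s\le X/f^2\\ s\text{ squarefree}}}
        \frac{\mathfrak D_C(s)L_M(1,\chi_s)}{s}.
\]

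Next apply \(\HQE\) with \(\lambda=1\) and the given \(C\).  Put
\[
        A(Y)=\sum_{2\le s\le Y,\ s \text{ squarefree}}\mathfrak D_C(s)L_M(1,\chi_s),
\]
so that \(A(Y)\ll_C Y\) uniformly in \(M\ge2\) and \(Y\ge2\).  Abel summation then gives
\[
        \sum_{s\le Z}\frac{\mathfrak D_C(s)L_M(1,\chi_s)}{s}
        =\frac{A(Z)}{Z}+\int_2^Z\frac{A(t)}{t^2}\,dt\ll_C 1+\log Z,
\]
again uniformly in \(M\).  Substituting this bound with \(Z=X/f^2\), and using \(\sum_f\tauD(f)^C f^{-2}<\infty\), bounds the whole sum by \(\ll_C\log X\).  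The upper restriction \(M\le X\) plays no role beyond matching the range in which the lemma is applied.  Uniformity in \(M\) comes directly from the hypothesis.

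The only point needing care is the comparison \(\mathfrak D_C(j)\ll\mathfrak D_C(s)\tauD(f)^C\), together with the identification \(\chi_{sf^2}=\chi_s\).  The latter matters at the primes \(p\mid f\): there \((j/p)=0\), whereas \(\chi_s(p)\) need not vanish.  Since the paper defines \(\chi_j\) as the primitive character of the kernel, \(L_M(1,\chi_j)\) is literally \(L_M(1,\chi_s)\), and no repair is required at those primes.  Apart from this, the argument is routine.
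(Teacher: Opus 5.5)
Your proposal is correct and follows essentially the same route as the paper. Both arguments write nonsquare $j=su^2$ with $s>1$ squarefree, identify $\chi_j=\chi_s$, and bound $\mathfrak D_C(j)$ by $\mathfrak D_C(s)$ times a weight in $u$ that is summable against $u^{-2}$; both then finish by partial summation from $\HQE$ with $\lambda=1$. The only cosmetic difference is the choice of weight: the paper uses the submultiplicativity $\mathfrak D_C(su^2)\le\mathfrak D_C(s)\mathfrak D_C(u)$, whereas you use $\tauD(f)^C$.
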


\begin{proof}
Write every nonsquare $j$ uniquely as
\[
        j=s u^2,
        \qquad s>1\text{ squarefree}.
\]
By definition $\chi_j=\chi_s$, up to the harmless convention at the prime $2$, and
\[
        \mathfrak D_C(su^2)\le \mathfrak D_C(s)\mathfrak D_C(u).
\]
Hence
\[
\begin{aligned}
        \sum_{\substack{1\le j\le X\\j\textup{ not a square}}}
        \frac{\mathfrak D_C(j)L_M(1,\chi_j)}j
        &\le
        \sum_{u\le X^{1/2}}\frac{\mathfrak D_C(u)}{u^2}
        \sum_{\substack{s\le X/u^2\\s>1\textup{ squarefree}}}
        \frac{\mathfrak D_C(s)L_M(1,\chi_s)}s .
\end{aligned}
\]
By \Cref{hyp:quadratic-euler-input} with $\lambda=1$, the partial sums
\[
        A(Y)=\sum_{\substack{2\le s\le Y\\s\textup{ squarefree}}}
        \mathfrak D_C(s)L_M(1,\chi_s)
\]
satisfy $A(Y)\ll_C Y$, uniformly in $M$.  Partial summation gives
\[
        \sum_{\substack{s\le Y\\s\textup{ squarefree}}}
        \frac{\mathfrak D_C(s)L_M(1,\chi_s)}s\ll_C\log(2Y).
\]
Therefore the preceding display is
\[
        \ll_C \sum_{u\le X^{1/2}}\frac{\mathfrak D_C(u)}{u^2}
        \log\left(2X/u^2\right)
        \ll_C \log X,
\]
since $\sum_{u\ge1}\mathfrak D_C(u)/u^2$ converges.
\end{proof}

\begin{lemma}[Averaged shifted-square first moment]\label{lem:henriot-first-shifted}
Assume \Cref{hyp:quadratic-euler-input}.  As \(K\to\infty\),
\[
        \sum_{\substack{1\le j\le2K\\ j\textup{ not a square}}}
        \frac1j
        \sum_{\substack{k\le K\\ k^2>j}}\tauD(k^2-j)
        \ll K(\log K)^2.
\]
For the square shifts one also has
\[
        \sum_{a\le(2K)^{1/2}}\frac1{a^2}
        \sum_{\substack{k\le K\\ k>a}}\tauD(k^2-a^2)
        \ll K(\log K)^2.
\]
\end{lemma}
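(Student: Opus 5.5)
For the nonsquare shifts, the plan is to cut the range \(k\le K\) into dyadic blocks \(M<k\le2M\), with \(M=2^{-i}K\), plus a bounded initial segment. On each block I apply part (b) of \Cref{thm:shifted-square-estimate}, which gives
\[
        \sum_{\substack{M<k\le2M\\ k^2>j}}\tauD(k^2-j)\ll M\log(2M)\,\mathfrak D_{C}(j)\,L_{2M}(1,\chi_j).
\]
The difficulty is that the truncation \(2M\) changes from block to block. I would use Mertens-type monotonicity: for \(M'\le M\),
\[
        L_{2M'}(1,\chi)\le L_{2M}(1,\chi)\prod_{2M'<p\le 2M}\Bigl(1-\frac1p\Bigr)^{-1}\ll L_{2M}(1,\chi)\frac{\log 2M}{\log 2M'}.
\]
Applying this with \(M=K\) bounds each block by \(M\log(2K)\,\mathfrak D_C(j)L_{2K}(1,\chi_j)\). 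Since the geometric sum of the \(M\) is \(\ll K\), the full sum over \(k\le K\) is \(\ll K\log K\,\mathfrak D_C(j)L_{2K}(1,\chi_j)\), uniformly in nonsquare \(j\le2K\). The empty blocks, those with \(4M^2\le j\), contribute nothing. The bounded initial segment \(k\le 2\) contributes \(O(\tauD(4))\) for each \(j\), and that is harmless, since \(\sum_j 1/j\ll\log K\) gives only \(O(\log K)\).

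Next I multiply by \(1/j\) and sum over nonsquare \(j\le2K\). This leaves
\[
        K\log K\sum_{\substack{j\le2K\\ j\text{ nonsquare}}}\frac{\mathfrak D_C(j)L_{2K}(1,\chi_j)}{j}.
\]
I then invoke \Cref{lem:quadratic-euler-mean} with \(X=2K\) and truncation \(2K\le X\), which is within its uniformity range. That lemma is where \(\HQE\) enters, and it bounds the inner sum by \(\ll\log K\). Together this gives \(K(\log K)^2\).

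For the square shifts \(j=a^2\), I would again split \(k\le K\) dyadically. On each block I apply the sharper \(B=1\) form of \Cref{prop:square-shift-estimates}, which gives \(\ll M(\log 2M)^2\mathfrak D_{C}(a)\). Summing the blocks gives \(\ll K(\log K)^2\mathfrak D_C(a)\). Then
\[
        \sum_{a}\frac{\mathfrak D_C(a)}{a^2}
\]
converges, because \(\mathfrak D_C(a)\ll_C(\log\log 3a)^C\), which can be seen by comparing with \(\prod_{p\mid a}(1+1/p)\ll\log\log a\). This gives the second display. No hypothesis is needed here beyond the proved square-shift estimate.

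The main obstacle I expect is the truncation bookkeeping in the first part. Part (b) produces \(L_{2M}\) with \(M\) varying, while \Cref{lem:quadratic-euler-mean} needs a single truncation parameter. The monotonicity comparison above costs a factor \(\log 2K/\log 2M\) on small blocks. That factor is absorbed because the weight \(M\) decays geometrically: \(\sum_i 2^{-i}\cdot(\log K/\log(2^{1-i}K))\ll1\) once blocks with \(M\le K^{1/2}\) are handled crudely. For those small blocks, I would instead use the elementary \Cref{lem:shifted-square-divisor-sum}, which gives \(\ll K^{1/2}(\log K)^2B(j)\), and that is negligible after summing over \(j\).
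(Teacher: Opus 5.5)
Your proof is correct and follows the paper's argument: dyadic blocks, part (b) of \Cref{thm:shifted-square-estimate}, \Cref{lem:quadratic-euler-mean} with \(X=2K\), and for square shifts the \(B=1\) bound of \Cref{prop:square-shift-estimates} summed against the convergent series \(\sum_a\mathfrak D_C(a)/a^2\). The one difference is that the paper applies \Cref{lem:quadratic-euler-mean} block by block with truncation \(2M\), which that lemma already allows because it is uniform in \(2\le M\le X\), so the truncation ``obstacle'' never arises. Your Mertens transfer to \(L_{2K}\) is also valid and loses nothing, since \(M\log(2M)\cdot\log(2K)/\log(2M)=M\log(2K)\) sums to \(K\log K\). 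That makes your separate crude treatment of blocks with \(M\le K^{1/2}\) unnecessary; it would in any case apply \Cref{lem:shifted-square-divisor-sum} outside its stated range \(j\le 2K\).
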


\begin{proof}
First take nonsquare \(j\), and decompose the \(k\)-sum into dyadic intervals
\(M<k\le2M\).  The finitely many intervals with \(M<2\) contribute \(O(K\log K)\),
which is harmless.  For \(M\ge2\), \Cref{thm:shifted-square-estimate}\textup{(b)} gives
\[
        \sum_{\substack{M<k\le2M\\ k^2>j}}\tauD(k^2-j)
        \ll
        M(\log(2M))\mathfrak D_{C_1'}(j)L_{2M}(1,\chi_j).
\]
Since \(2M\le2K\), Lemma \ref{lem:quadratic-euler-mean}, with \(X=2K\), gives
\[
\begin{aligned}
        &\sum_{\substack{1\le j\le2K\\j\text{ not a square}}}
        \frac1j
        \sum_{\substack{M<k\le2M\\k^2>j}}\tauD(k^2-j)  \\
        &\qquad\ll
        M\log(2M)
        \sum_{\substack{1\le j\le2K\\j\text{ not a square}}}
        \frac{\mathfrak D_{C_1'}(j)L_{2M}(1,\chi_j)}j
        \ll M\log(2M)\log K .
\end{aligned}
\]
Summing over dyadic \(M\le K\) gives the nonsquare contribution
\(O(K(\log K)^2)\).

For \(j=a^2\), \Cref{prop:square-shift-estimates}, summed over dyadic intervals,
gives
\[
        \sum_{\substack{k\le K\\ k>a}}\tauD(k^2-a^2)
        \ll K(\log K)^2\mathfrak D_{C_1'}(a).
\]
Therefore
\[
        \sum_{a\le(2K)^{1/2}}\frac1{a^2}
        \sum_{\substack{k\le K\\ k>a}}\tauD(k^2-a^2)
        \ll K(\log K)^2
        \sum_{a=1}^{\infty}\frac{\mathfrak D_{C_1'}(a)}{a^2}
        \ll K(\log K)^2,
\]
because \(\sum_a\mathfrak D_{C_1'}(a)/a^2\) is an absolutely convergent Euler product.
\end{proof}

\begin{theorem}[Conditional refinement under the unproved uniform hypothesis \(\HQE\)]\label{thm:E-average}
Assume \Cref{hyp:quadratic-euler-input}.  As \(K\to\infty\),
\[
        \sum_{2\le k\le K}|E_k|\ll K(\log K)^2.
\]
More strongly,
\[
        \sum_{2\le k\le K}a_k\ll K(\log K)^2.
\]
This theorem is not an unconditional result of the paper.  Its only new input beyond the
shifted-square estimate \(\HST\) is the hypothesis \(\HQE\).
The elementary root-counting bound is \Cref{thm:E-average-elementary}.  The saved
logarithm in the present theorem comes from combining \(\HST\) with \(\HQE\): one first applies the \(B=1\) part of
\Cref{thm:shifted-square-estimate} and then averages the remaining good-prime quadratic-character
factors over the deficits \(j\) using \Cref{hyp:quadratic-euler-input}.
\end{theorem}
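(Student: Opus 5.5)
The plan is to repeat the opening reduction of \Cref{thm:E-average-elementary}, and then replace the elementary root-counting input by \Cref{lem:henriot-first-shifted}.

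First, since \(|E_k|\le a_k\), it suffices to bound \(\sum_{2\le k\le K}a_k\). Every active deficit satisfies \(j\le\tauD(k^2-j)\), so
\[
        a_k\le\sum_{j=1}^{2k-2}\frac{\tauD(k^2-j)}{j}.
\]
Summing over \(2\le k\le K\) and interchanging the order of summation gives
\[
        \sum_{2\le k\le K}a_k\le\sum_{1\le j\le 2K}\frac1j\sum_{\substack{k\le K\\ k^2>j}}\tauD(k^2-j).
\]
The constraint \(j\le 2k-2\) implies \(k^2>j\), so restricting to \(k^2>j\) loses nothing, and enlarging the range only increases the nonnegative sum.

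Next, split the \(j\)-sum into nonsquare \(j\) and square \(j=a^2\).

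For nonsquare \(j\), the first display of \Cref{lem:henriot-first-shifted} applies directly. It bounds the contribution by \(K(\log K)^2\). This is the only place where \Cref{hyp:quadratic-euler-input} enters, through \Cref{lem:quadratic-euler-mean} and part (b) of \Cref{thm:shifted-square-estimate}.

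For square \(j=a^2\le 2K\), we have \(a\le(2K)^{1/2}\) and \(1/j=1/a^2\). The condition \(k^2>a^2\) is the same as \(k>a\). So this part is exactly the second display of \Cref{lem:henriot-first-shifted}, which rests on the unconditional \Cref{prop:square-shift-estimates}, and it is again \(\ll K(\log K)^2\).

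Adding the two contributions gives \(\sum_{k\le K}a_k\ll K(\log K)^2\), and the bound for \(|E_k|\) follows from \(|E_k|\le a_k\).

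There is no real obstacle, because all the analytic work is already packaged in \Cref{lem:henriot-first-shifted}. The only point to check is bookkeeping: each \(j\) must be assigned to exactly one of the two classes, and the index ranges must match the lemma's hypotheses. In particular, the constraint \(j\le 2k-2\le 2K\) is what lets us use \(X=2K\) in \Cref{lem:quadratic-euler-mean}.
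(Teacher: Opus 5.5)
Your proposal is correct and is essentially the paper's own proof. It uses the same majorant \(a_k\le\sum_{j\le 2k-2}\tauD(k^2-j)/j\), the same interchange of summation, and the same split into nonsquare and square shifts, each bounded by the corresponding display of \Cref{lem:henriot-first-shifted}. One small precision: \(\HQE\) enters only through \Cref{lem:quadratic-euler-mean}, since part (b) of \Cref{thm:shifted-square-estimate} is just the elementary \Cref{lem:visible-first-nonsquare}.
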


\begin{proof}
For every active deficit \(j\) at level \(k\), one has \(j\le\tauD(k^2-j)\).  Thus
\[
        a_k\le \sum_{j=1}^{2k-2}\frac{\tauD(k^2-j)}j.
\]
Summing over \(k\le K\) and then over \(j\) gives
\[
        \sum_{2\le k\le K}a_k
        \le \sum_{j\le 2K}\frac1j
        \sum_{\substack{k\le K\\ k^2>j}}\tauD(k^2-j).
\]
The nonsquare part of the last sum is \(O(K(\log K)^2)\) by
\Cref{lem:henriot-first-shifted}.  The square part is
\[
        \sum_{a\le(2K)^{1/2}}\frac1{a^2}
        \sum_{\substack{k\le K\\ k>a}}\tauD(k^2-a^2),
\]
which is also \(O(K(\log K)^2)\) by the same lemma.  Hence
\(\sum_{2\le k\le K}a_k\ll K(\log K)^2\), and \(|E_k|\le a_k\) gives the
exit-set estimate.
\end{proof}

\begin{remark}[First-moment scale]\label{rem:first-moment-scale}
The exact identity in \Cref{prop:active-reindexing} rewrites the active-deficit first moment as a count of integers whose divisor count exceeds their distance to the next square.  This makes the conjectural scale \(K\log K\) natural: in the interval just below \(k^2\), the distance-to-square variable runs through \(1\le j\le 2k-2\), and if \(\tauD(k^2-j)\) behaved like a locally independent divisor count of average size about \(\log k^2\), then only about logarithmically many deficits should be active at a typical level.  A first moment of order \(K\log K\) is therefore compatible with the global average order of \(\tauD\).  The present paper proves the unconditional bounds
\[
        K\ll \sum_{2\le k\le K}a_k
        \quad\text{and}\quad
        \sum_{2\le k\le K}a_k\ll K(\log K)^3,
\]
and, assuming \(\HST+\HQE\), the refinement
\[
        \sum_{2\le k\le K}a_k\ll K(\log K)^2.
\]
The remaining extra logarithm in the two-logarithm bound is not merely a bookkeeping loss in the divisor average.  The active condition is the threshold event
\[
        \tauD(k^2-j)\ge j,
\]
so one needs information about the distribution of divisor values on the moving family of shifted squares \(k^2-j\), with the sharp weight imposed by distance to the next square.  Upper-bound sieve estimates can replace this indicator by \(\tauD(k^2-j)/j\); after summing over \(j\), that replacement introduces a harmonic logarithm which current upper-bound technology does not cancel.  Removing it would require an asymptotic or cancellation theorem for these conditioned indicators, strong enough that the accumulated error over \(1\le j\le 2K\) is \(o(K\log K)\).  The theorem \(\HST\), together with the input \(\HQE\), controls the expected local factors and saves one logarithm, but they remain upper-bound tools rather than such a distribution law.
\end{remark}

\begin{remark}[The elementary joint-root route]\label{rem:joint-root-route}
The most direct elementary approach to the second moment starts from
\[
        1_{\tauD(k^2-i)\ge i}\le \frac{\tauD(k^2-i)}{i}
\]
and hence
\[
        a_k^2\le
        \sum_{1\le i,j\le 2k-2}
        \frac{\tauD(k^2-i)\tauD(k^2-j)}{ij}.
\]
One is then led to count, for many pairs of moduli \(d,e\), the simultaneous congruences
\[
        k^2\equiv i\pmod d,\qquad k^2\equiv j\pmod e.
\]
The next proposition carries out this approach for truncated divisor sums.  It is included to isolate exactly what the root-counting method proves without invoking Nair--Tenenbaum type input.
\end{remark}

For \(Y\ge1\), put
\[
        \tauD_{\le Y}(n)=\sum_{\substack{d\mid n\\ d\le Y}}1
        \qquad(n\ge1),
\]
and extend it by \(\tauD_{\le Y}(n)=0\) for \(n\le0\).

\begin{proposition}[Elementary truncated joint-root second moment]\label{prop:truncated-joint-root}
There is an absolute constant \(C\) such that, uniformly for \(K\ge3\) and
\(1\le Y\le K^{1/2}\),
\[
        \sum_{2\le k\le K}
        \left(
        \sum_{1\le j\le 2k-2}\frac{\tauD_{\le Y}(k^2-j)}{j}
        \right)^2
        \ll K(\log K)^C .
\]
\end{proposition}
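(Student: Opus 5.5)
Expanding the square, the left side equals
\[
        \sum_{i,j\le 2K}\frac{1}{ij}\sum_{d,e\le Y}\#\{k\le K:\ k^2>\max(i,j),\ k^2\equiv i\ (\mathrm{mod}\ d),\ k^2\equiv j\ (\mathrm{mod}\ e)\}.
\]
We drop the condition \(j\le 2k-2\) except to keep \(i,j\le 2K\).

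\textbf{Step 1: the joint congruence count.} For fixed \(d,e\), write \(g=(d,e)\) and \(\ell=[d,e]\). The two congruences are compatible only when \(i\equiv j\pmod g\). In that case the set of admissible \(k\bmod \ell\) injects into pairs of roots \(k\bmod d\), \(k\bmod e\) that agree mod \(g\). So the number of residues is at most \(\rho_i(d)\rho_j(e)\), and by Lemma~\ref{lem:root-bound} this is \(\ll 2^{\omega(d)}2^{\omega(e)}(i,d)^{1/2}(j,e)^{1/2}\). Since \(\ell\le Y^2\le K\), the count of \(k\le K\) is at most \((K/\ell+1)\) times this, hence \(\ll (K/\ell)\rho_i(d)\rho_j(e)\). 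The truncation \(Y\le K^{1/2}\) is used exactly here: it makes the ``\(+1\)'' term harmless, with no need for equidistribution.

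\textbf{Step 2: summing over \(i,j\).} The compatibility condition \(g\mid i-j\) is where the cross-correlation enters, and I would simply drop it as an upper bound. The sum then factors as
\[
        K\sum_{d,e\le Y}\frac{1}{[d,e]}\Bigl(\sum_{i\le 2K}\frac{\rho_i(d)}{i}\Bigr)\Bigl(\sum_{j\le 2K}\frac{\rho_j(e)}{j}\Bigr).
\]
For each inner sum, use \((i,d)^{1/2}\le\sum_{f\mid (i,d)}\sqrt f\), exactly as in Lemma~\ref{lem:shifted-square-divisor-sum} and Theorem~\ref{thm:E-average-elementary}. This gives
\[
        \sum_{i\le 2K}\frac{\rho_i(d)}{i}\ll 2^{\omega(d)}\log K\sum_{f\mid d}\frac{2^{\omega(f)}}{\sqrt f}\ll 2^{\omega(d)}\tauD(d)\log K,
\]
using the crude bound \(\sum_{f\mid d}2^{\omega(f)}f^{-1/2}\le\tauD(d)^2\) if needed. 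Any polynomial in \(\tauD(d)\) is acceptable.

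\textbf{Step 3: the outer sum.} It remains to show
\[
        \sum_{d,e\le Y}\frac{\tauD(d)^3\tauD(e)^3}{[d,e]}\ll(\log K)^{C}.
\]
Write \(d=gr\), \(e=gs\) and use \([d,e]\ge grs\) together with submultiplicativity \(\tauD(gr)\le\tauD(g)\tauD(r)\). The sum is then bounded by
\[
        \Bigl(\sum_{g\le Y}\frac{\tauD(g)^6}{g}\Bigr)\Bigl(\sum_{r\le Y}\frac{\tauD(r)^3}{r}\Bigr)^2.
\]
Each factor is polylogarithmic by Lemma~\ref{lem:standard-divisor-moments} and partial summation. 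Multiplying the factor \(K\) by \((\log K)^2\) from Step 2 and this polylog gives \(K(\log K)^C\) with an absolute \(C\).

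\textbf{Main obstacle.} The delicate point is Step 1: checking that the joint root count really is bounded by \(\rho_i(d)\rho_j(e)\) when \(d\) and \(e\) share factors, and that the non-coprime gcd interaction does not lose more than divisor-type factors. The CRT injection argument into pairs of roots handles this cleanly, and I would verify it first prime by prime. I expect the remaining steps to be routine bookkeeping of the kind already done in Theorem~\ref{thm:E-average-elementary}.
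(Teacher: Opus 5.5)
Your proposal is correct and follows essentially the same route as the paper. Both arguments enlarge the range to $j\le 2K$; bound the joint count by $2K\rho_i(d)\rho_j(e)/[d,e]$, using the injection into pairs of roots and $[d,e]\le Y^2\le K$; apply \Cref{lem:root-bound} with $(i,d)^{1/2}\le\sum_{f\mid (i,d)}\sqrt f$ to extract $(\log K)^2$; and finish by writing $d=gr$, $e=gs$ and applying divisor-moment estimates. The differences are cosmetic and do not affect the argument: you state the dropped compatibility condition $g\mid i-j$ explicitly, and you carry $\tauD(d)^3$ where the paper uses $F(d)\le\tauD(d)^2$.
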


\begin{proof}
We enlarge the inner range to \(1\le j\le2K\).  Terms with \(j\ge k^2\) vanish by the zero-extension, and the remaining newly added positive terms only increase the quantity being bounded.  For
\(d,e\le Y\), write \([d,e]\) for the least common multiple, and define
\[
        N_{i,j}(d,e)=\#\{k\le K:k^2\equiv i\pmod d,
        \ k^2\equiv j\pmod e\}.
\]
Since \([d,e]\le Y^2\le K\), counting residue classes modulo \([d,e]\) gives
\[
        N_{i,j}(d,e)
        \le 2K\frac{\rho_i(d)\rho_j(e)}{[d,e]}.
\]
By \Cref{lem:root-bound},
\[
        \rho_i(d)\rho_j(e)
        \ll 2^{\omega(d)+\omega(e)}(i,d)^{1/2}(j,e)^{1/2}.
\]
Therefore the required second moment is at most
\[
\begin{aligned}
        &K\sum_{d,e\le Y}\frac{2^{\omega(d)+\omega(e)}}{[d,e]}
        \left(\sum_{i\le2K}\frac{(i,d)^{1/2}}{i}\right)
        \left(\sum_{j\le2K}\frac{(j,e)^{1/2}}{j}\right)  \\
        &\qquad\ll
        K(\log K)^2
        \sum_{d,e\le Y}\frac{F(d)F(e)}{[d,e]},
\end{aligned}
\]
where
\[
        F(n)=2^{\omega(n)}\sum_{a\mid n}a^{-1/2}.
\]
Here we used
\[
        \sum_{i\le2K}\frac{(i,d)^{1/2}}{i}
        \le \sum_{a\mid d}a^{1/2}\sum_{\substack{i\le2K\\a\mid i}}\frac{1}{i}
        \ll (\log K)\sum_{a\mid d}a^{-1/2}.
\]
It remains to note that the double lcm sum is polylogarithmic.  Since
\(F(n)\le \tauD(n)^2\), writing \(d=ha\), \(e=hb\), \((a,b)=1\), gives
\[
\begin{aligned}
        \sum_{d,e\le Y}\frac{F(d)F(e)}{[d,e]}
        &\le
        \sum_{h\le Y}\frac{\tauD(h)^4}{h}
        \left(\sum_{a\le Y/h}\frac{\tauD(a)^2}{a}\right)^2  \\
        &\ll (\log Y)^C,
\end{aligned}
\]
by the standard elementary divisor-moment estimates
\(\sum_{n\le x}\tauD(n)^A/n\ll_A(\log x)^{C_A}\).  This proves the claim.
\end{proof}

\begin{remark}[Why this elementary route is not enough]\label{rem:joint-root-barrier}
The preceding proposition is the clean part of the proposed elementary second-moment proof.  To use it for the full divisor function one would need to take divisor moduli as large as \(K\).  Then \([d,e]\) can be as large as \(K^2\), and the elementary residue-class estimate becomes
\[
        N_{i,j}(d,e)\le \left(\frac{K}{[d,e]}+1\right)
        \rho_{i,j}(d,e).
\]
The additional \(+1\) term no longer satisfies \(+1\ll K/[d,e]\), and after insertion of the weights \((ij)^{-1}\) it leads to a large-lcm remainder not controlled by \Cref{lem:root-bound} alone.  Thus this elementary route does not by itself prove
\(\sum_{k\le K}a_k^2\ll K(\log K)^C\); the untruncated fixed-moment estimates below use the shifted-square theorem \(\HST\).
\end{remark}

\begin{corollary}[Unconditional density-one polylogarithmic normal order]\label{cor:E-normal-order-unconditional}
For every \(\varepsilon>0\),
\[
        \#\{k\le K:|E_k|>(\log k)^{3+\varepsilon}\}=o(K).
\]
Equivalently, \(|E_k|\le(\log k)^{3+\varepsilon}\) for a natural-density-one set of
levels \(k\).
\end{corollary}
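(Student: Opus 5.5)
The plan is a direct Markov-type argument from the unconditional first moment in \Cref{thm:E-average-elementary}, combined with a dyadic decomposition that lets us compare the threshold \((\log k)^{3+\varepsilon}\) with \((\log K)^{3+\varepsilon}\).

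Fix \(\varepsilon>0\) and a large \(K\). Split the levels \(k\le K\) into the short initial range \(k\le K/\log K\) and the range \(K/\log K<k\le K\). The initial range contributes at most \(K/\log K=o(K)\) exceptional levels trivially. For \(K/\log K<k\le K\) we have
\[
\log k\ge \log K-\log\log K\ge \tfrac12\log K
\]
for large \(K\). Hence \(|E_k|>(\log k)^{3+\varepsilon}\) implies \(|E_k|>2^{-3-\varepsilon}(\log K)^{3+\varepsilon}\).

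Markov's inequality applied to the nonnegative quantities \(|E_k|\), together with \Cref{thm:E-average-elementary}, then gives
\[
\#\{K/\log K<k\le K:|E_k|>(\log k)^{3+\varepsilon}\}
\le \frac{2^{3+\varepsilon}}{(\log K)^{3+\varepsilon}}\sum_{2\le k\le K}|E_k|
\ll \frac{K(\log K)^3}{(\log K)^{3+\varepsilon}}=\frac{K}{(\log K)^{\varepsilon}}.
\]
Adding the two ranges gives \(O(K/\log K+K(\log K)^{-\varepsilon})=o(K)\).

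The equivalence with a natural-density-one set of levels is then immediate: the complement of the exceptional set has counting function \(K-o(K)\).

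There is no real obstacle here. The only point needing care is that the threshold depends on \(k\) rather than \(K\). This is exactly why the small \(k\) are removed first: it lets \(\log k\asymp\log K\) on the remaining range. Note that the argument uses only the unconditional three-logarithm bound, so neither \(\HST\) nor \(\HQE\) enters.
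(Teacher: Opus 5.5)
Your proof is correct and follows essentially the paper's argument: Markov's inequality applied to the unconditional first moment of \Cref{thm:E-average-elementary}, after discarding small levels so that \(\log k\asymp\log K\) on the remaining range. The only difference is the cutoff: the paper discards \(k\le K^{1/2}\) rather than \(k\le K/\log K\), and this changes nothing.
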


\begin{proof}
The contribution of \(k\le K^{1/2}\) is \(o(K)\).  For \(K^{1/2}<k\le K\), we have
\(\log k\ge \frac12\log K\).  The assertion follows from
\Cref{thm:E-average-elementary}:
\[
\#\{K^{1/2}<k\le K:|E_k|>(\log k)^{3+\varepsilon}\}
\ll_\varepsilon \frac{K(\log K)^3}{(\log K)^{3+\varepsilon}}
=o(K).
\]
\end{proof}

\begin{corollary}[Conditional \(\HQE\) density-one polylogarithmic normal order]\label{cor:E-normal-order}
Assume \Cref{hyp:quadratic-euler-input}.  For every \(\varepsilon>0\),
\[
        \#\{k\le K:|E_k|>(\log k)^{2+\varepsilon}\}=o(K).
\]
Equivalently, \(|E_k|\le(\log k)^{2+\varepsilon}\) for a natural-density-one set of
levels \(k\).
\end{corollary}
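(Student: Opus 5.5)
This is a Markov/Chebyshev argument applied to the conditional first-moment bound, Theorem~\ref{thm:E-average}, in the same way that Corollary~\ref{cor:E-normal-order-unconditional} was deduced from Theorem~\ref{thm:E-average-elementary}. Throughout we assume \Cref{hyp:quadratic-euler-input}, so that
\[
        \sum_{2\le k\le K}|E_k|\ll K(\log K)^2 .
\]
This is the only place the hypothesis enters. The unconditional estimate \(\HST\) is used only inside the proof of Theorem~\ref{thm:E-average}, so nothing further needs to be assumed.

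Fix \(\varepsilon>0\) and split the levels \(k\le K\) at \(K^{1/2}\).

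\emph{Small levels.} The levels \(k\le K^{1/2}\) contribute at most \(K^{1/2}=o(K)\) to the count, whatever the size of \(|E_k|\).

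\emph{Large levels.} For \(K^{1/2}<k\le K\) we have \(\log k\ge\frac12\log K\). Hence the exceptional condition \(|E_k|>(\log k)^{2+\varepsilon}\) implies
\[
        |E_k|>2^{-2-\varepsilon}(\log K)^{2+\varepsilon}.
\]
Counting these exceptional levels, and using the first-moment bound above on the whole range \(k\le K\), gives
\[
        \#\{K^{1/2}<k\le K:|E_k|>(\log k)^{2+\varepsilon}\}
        \le \frac{2^{2+\varepsilon}}{(\log K)^{2+\varepsilon}}\sum_{2\le k\le K}|E_k|
        \ll_\varepsilon \frac{K(\log K)^2}{(\log K)^{2+\varepsilon}}
        = K(\log K)^{-\varepsilon}=o(K).
\]

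\emph{Conclusion.} Adding the two ranges proves the displayed \(o(K)\) statement. The equivalent formulation is just a rephrasing: the complementary set of levels, those with \(|E_k|\le(\log k)^{2+\varepsilon}\), has counting function \(K-o(K)\), so it has natural density one.

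There is no real obstacle here. The one point needing care is the dyadic-type cutoff at \(K^{1/2}\), which lets us replace \(\log k\) by \(\log K\) at the cost of a constant factor. Everything of substance is contained in Theorem~\ref{thm:E-average}, and through it in \HQE.
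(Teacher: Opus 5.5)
Your proposal is correct and is essentially the paper's argument: the paper proves this corollary by repeating the split at \(K^{1/2}\) and the Markov bound from Corollary~\ref{cor:E-normal-order-unconditional}, with Theorem~\ref{thm:E-average} in place of Theorem~\ref{thm:E-average-elementary}. Your explicit factor \(2^{2+\varepsilon}\) is the constant the paper absorbs into \(\ll_\varepsilon\).
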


\begin{proof}
The proof is the same dyadic-density argument, using the refined theorem
\Cref{thm:E-average} in place of \Cref{thm:E-average-elementary}.
\end{proof}

The first-moment estimates above are complemented below by fixed-moment and truncated first-moment bounds.  Those bounds improve the control of large values, but they do not prove a linear first moment.

The next lemma records the full-interval form needed for the fixed-moment estimates and repeats the local bad-prime calculation so that the dependence on the shift \(j\) is visible inside the proof.

Here is the heuristic behind the discriminant factor.  Expanding a divisor sum over \(Q_j(k)=k^2-j\) asks, for each modulus \(d\), how often \(Q_j(k)\equiv0\pmod d\).  By the Chinese remainder theorem this is controlled prime by prime by the number of roots of
\[
        x^2\equiv j\pmod {p^\nu}.
\]
If \(p\nmid 2j\), the two roots, when they exist, are simple and Hensel lifting is harmless; these are the good primes and they contribute only to the usual logarithmic Euler products.  If \(p\mid2j\), the derivative \(2x\) may vanish at a root and the roots can coalesce or proliferate at higher prime powers.  These are exactly the primes at which the congruence classes bottleneck the divisor sum.  Since the discriminant of \(X^2-j\) is \(4j\), the bad-prime weight in \Cref{thm:shifted-square-estimate} depends only on the primes dividing \(2j\), which is why the factor \(\mathfrak D_A(j)\) appears below.

\begin{lemma}[Discriminant-explicit shifted-square divisor moment]\label{lem:shifted-square-moment}
Let \(A\ge1\) be fixed.  There are constants \(C_0,C_1\), depending only on \(A\), such that, uniformly for \(K\ge2\) and \(1\le j\le2K\),
\[
        \sum_{\substack{k\le K\\ k^2>j}} \tauD(k^2-j)^A
        \ll_A K(\log K)^{C_1}\mathfrak D_A(j),
        \qquad
        \mathfrak D_A(j):=\prod_{p\mid 2j}\left(1+\frac1p\right)^{C_0}.
\]
Consequently
\[
        \sum_{j=1}^{\infty}\frac{\mathfrak D_A(j)}{j^2}<\infty .
\]
\end{lemma}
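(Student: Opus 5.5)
The plan is to reduce the full range \(k\le K\) to dyadic blocks and then apply the dyadic estimates already proved, splitting according to whether \(j\) is a square.

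\textbf{Nonsquare shifts.} Decompose \(\{k\le K:k^2>j\}\) into the blocks \(M<k\le2M\) with \(M=2^{-i}K\). The finitely many blocks with \(M<2\) contain \(O(1)\) values of \(k\). For each such \(k\) we have \(k^2-j<k^2\le 16\), so \(\tauD(k^2-j)^A\ll_A1\). For \(M\ge2\), only blocks containing some \(k\) with \(k^2>j\) contribute, and for these \Cref{thm:shifted-square-estimate}\textup{(a)} applies with \(B=A\). It gives
\[
\sum_{\substack{M<k\le2M\\k^2>j}}\tauD(k^2-j)^A\ll_A M(\log(2M))^{C_A}\mathfrak D_{C_A'}(j).
\]
Summing the geometric series in \(M\le K\) yields \(K(\log K)^{C_A}\mathfrak D_{C_A'}(j)\).

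\textbf{Square shifts \(j=a^2\).} Here the condition \(k^2>j\) is exactly \(k>a\). \Cref{prop:square-shift-estimates} gives the same dyadic bound with \(\mathfrak D_{C_A'}(a)\). Since \(a\) and \(j=a^2\) have the same prime divisors, \(\mathfrak D_C(a)=\mathfrak D_C(j)\). Dyadic summation again gives \(K(\log K)^{C_A}\mathfrak D_{C_A'}(j)\).

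\textbf{Choice of constants.} Take \(C_1=C_A\) and \(C_0=C_A'\), the larger of the constants produced in the two cases; enlarging \(C_0\) only increases \(\mathfrak D_A\), since each factor is at least \(1\). Uniformity in \(1\le j\le 2K\) is inherited from the uniformity of the dyadic results in the shift. Note that the hypothesis \(j\le2K\) is not even needed beyond guaranteeing \(\log(2M)\le \log(2K)\ll\log K\).

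\textbf{Summability.} For the consequence, bound the Euler factor by a divisor-type function:
\[
\mathfrak D_A(j)\le 2^{C_0}\prod_{p\mid j}(1+1/p)^{C_0}\le 2^{C_0}\prod_{p\mid j}2^{C_0}=2^{C_0}\,2^{C_0\omega(j)}.
\]
Moreover \(2^{C_0\omega(j)}\le \tauD(j)^{C_0}\ll_\varepsilon j^{\varepsilon}\). Hence \(\sum_j \mathfrak D_A(j)/j^2\ll\sum_j j^{\varepsilon-2}<\infty\). Alternatively, \(\sum_j \mathfrak D_A(j)j^{-2}\) can be written as an absolutely convergent Euler product with local factors \(1+O_{A}(p^{-2})\).

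\textbf{Main obstacle.} The only delicate point is bookkeeping: making the dyadic constants independent of \(j\), and checking that the square and nonsquare cases give the same \(\mathfrak D\)-shape. Both follow from the uniformity statements already recorded in \Cref{thm:shifted-square-estimate} and \Cref{prop:square-shift-estimates}. No new analytic input is required.
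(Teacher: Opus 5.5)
Your proposal is correct and is essentially the paper's proof. The steps match: dyadic decomposition of \(k\le K\), \Cref{thm:shifted-square-estimate}(a) on each contributing block for nonsquare \(j\), \Cref{prop:square-shift-estimates} for \(j=a^2\) with \(\mathfrak D_C(a)=\mathfrak D_C(a^2)\) (you rightly note this is an equality, where the paper records only an inequality), and enlarging the constants to cover both cases. For summability the paper uses the Euler product with local factors \(1+O(p^{-2})\), which you also mention, and your alternative bound \(\mathfrak D_A(j)\le 2^{C_0}\tauD(j)^{C_0}\ll_\varepsilon j^{\varepsilon}\) is an equally valid shortcut.
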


\begin{proof}
Choose the exponent in the definition of \(\mathfrak D_A\) large enough to dominate the bad-prime exponent \(C_A'\) in \Cref{thm:shifted-square-estimate}.  If \(j\) is not a square,
decompose \(1\le k\le K\) into dyadic intervals.  On every contributing interval
\(M<k\le2M\) one has \(j<4M^2\), so \Cref{thm:shifted-square-estimate}\textup{(a)} applies and gives
\[
        \sum_{\substack{M<k\le2M\\ k^2>j}}
        \tauD(k^2-j)^A
        \ll_A M(\log(2M))^{C_A}\mathfrak D_A(j).
\]
The initial interval \(M<2\) is absorbed into the constant, and summing over dyadic
\(M\le K\) yields
\[
        \sum_{\substack{k\le K\\ k^2>j}}\tauD(k^2-j)^A
        \ll_A K(\log K)^{C_1}\mathfrak D_A(j)
\]
for a possibly larger exponent \(C_1\).

If \(j=a^2\), \Cref{prop:square-shift-estimates}, summed over dyadic intervals,
gives
\[
        \sum_{\substack{k\le K\\ k>a}}\tauD(k^2-a^2)^A
        \ll_A K(\log K)^{C_1}\mathfrak D_A(a).
\]
Since every prime divisor of \(a\) also divides \(a^2=j\), we have
\(\mathfrak D_A(a)\le\mathfrak D_A(j)\), after increasing the fixed exponent if necessary.
This proves the uniform shifted-square estimate in both the irreducible and square-shift
cases.

Finally, the prime \(2\) contributes at most a fixed factor, and
\[
        \sum_{j=1}^{\infty}\frac{\mathfrak D_A(j)}{j^2}
        \le C
        \prod_p\left(1+
          \sum_{\nu\ge1}\frac{(1+1/p)^{C_0}}{p^{2\nu}}\right)<\infty.
\]
Each local factor is \(1+O(p^{-2})\), so the Euler product converges.
\end{proof}

\begin{theorem}[Second moment of the exit sets]\label{thm:E-second-moment}
There is an absolute constant \(C\ge0\) such that
\[
        \sum_{2\le k\le K}|E_k|^2\le \sum_{2\le k\le K}a_k^2
        \ll K(\log K)^C .
\]
\end{theorem}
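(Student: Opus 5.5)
The first inequality is immediate from \(|E_k|\le a_k\), so the plan concentrates on bounding \(\sum_{k\le K}a_k^2\). I would start from the pointwise bound already used for the first moment: every active deficit \(j\) satisfies \(j\le\tauD(k^2-j)\), hence
\[
        a_k\le\sum_{j=1}^{2k-2}\frac{\tauD(k^2-j)}{j}.
\]
Rather than expanding the square and facing the joint-congruence problem of \Cref{rem:joint-root-barrier}, I would apply Cauchy--Schwarz with the harmonic weights \(1/j\):
\[
        a_k^2\le\Bigl(\sum_{j\le2k-2}\frac1j\Bigr)\Bigl(\sum_{j\le2k-2}\frac{\tauD(k^2-j)^2}{j}\Bigr)
        \ll \log K\sum_{j\le 2K}\frac{\tauD(k^2-j)^2}{j}\,\mathbf 1_{k^2>j}.
\]
This decouples the pairs \((i,j)\) and reduces everything to a single-shift second divisor moment.

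Next I would sum over \(2\le k\le K\) and interchange the order of summation, obtaining
\[
        \sum_{2\le k\le K}a_k^2\ll \log K\sum_{j\le2K}\frac1j\sum_{\substack{k\le K\\k^2>j}}\tauD(k^2-j)^2.
\]
The inner sum is exactly the case \(A=2\) of \Cref{lem:shifted-square-moment}, which gives the bound \(\ll K(\log K)^{C_1}\mathfrak D_2(j)\) uniformly in \(1\le j\le2K\). That lemma already covers both the nonsquare shifts, via \(\HST\) through \Cref{thm:shifted-square-estimate}(a), and the square shifts, via \Cref{prop:square-shift-estimates}. So the remaining task is to control
\[
        \sum_{j\le 2K}\frac{\mathfrak D_2(j)}{j}.
\]

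The only point needing care is this harmonic sum of the bad-prime weights. Unlike \(\sum_j\mathfrak D_A(j)/j^2\), it is not absolutely convergent, so I would bound it directly. Since \(\mathfrak D_2\) is a bounded constant times a multiplicative function with \(f(p^\nu)=(1+1/p)^{C_0}=1+O(1/p)\), write \(\mathfrak D_2=1*h\) with \(h(p)=O(1/p)\) and \(h(p^\nu)=0\) for \(\nu\ge2\). Then
\[
        \sum_{j\le 2K}\frac{\mathfrak D_2(j)}{j}\le \sum_e\frac{|h(e)|}{e}\sum_{m\le 2K/e}\frac1m\ll\log K\prod_p\bigl(1+O(p^{-2})\bigr)\ll\log K.
\]
Alternatively, the crude bound \(\mathfrak D_2(j)\ll(\log\log j)^{C_0}\) would also suffice, since only some power of \(\log K\) is needed.

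Combining the three steps gives \(\sum_{k\le K}a_k^2\ll K(\log K)^{C_1+2}\), which proves the theorem. The genuine obstacle, namely controlling untruncated divisor moments on \(k^2-j\) uniformly in the shift, is entirely absorbed into \Cref{lem:shifted-square-moment}. The Cauchy--Schwarz step is what avoids the large-lcm remainder described in \Cref{rem:joint-root-barrier}; it costs only an extra \(\log K\), which is harmless here.
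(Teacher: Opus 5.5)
Your proof is correct. It follows the same skeleton as the paper's: majorize the active indicator, decouple with Cauchy's inequality, sum over \(k\), and invoke \Cref{lem:shifted-square-moment}. The difference is in the majorant and the weights, and that choice changes what is needed.

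The paper uses \(1_{\tauD(k^2-j)\ge j}\le\tauD(k^2-j)^2/j^2\) together with Cauchy's inequality for the summable weights \(j^{-2}\). This gives \(a_k^2\ll\sum_j\tauD(k^2-j)^4/j^2\) and calls on the lemma with \(A=4\). The resulting series \(\sum_j\mathfrak D_4(j)/j^2\) converges absolutely, so no logarithm is lost in the \(j\)-summation.

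You keep the first-power majorant \(\tauD(k^2-j)/j\) with harmonic weights, so you need only the case \(A=2\). The cost is one logarithm from \(\sum_{j\le2k-2}1/j\) and another from the non-absolutely-convergent sum \(\sum_{j\le2K}\mathfrak D_2(j)/j\). Your convolution argument handles that sum correctly: up to a constant factor, \(\mathfrak D_2=1*h\) with \(h\) supported on squarefree integers and \(h(p)=O(1/p)\), which bounds it by \(O(\log K)\). Since the theorem only asserts some power of \(\log K\), the two extra logarithms are harmless.

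Your route halves the divisor-moment exponent demanded from \(\HST\). In general, Hölder with harmonic weights gives \(a_k^m\ll(\log K)^{m-1}\sum_j\tauD(k^2-j)^m/j\), needing \(A=m\) instead of the paper's \(A=2m\) in \Cref{thm:E-fixed-moments}. The paper's route, in exchange, has a bounded weight normalization and no harmonic-sum bookkeeping for the bad-prime factor.
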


\begin{proof}
If \(j\) is active at level \(k\), then \(j\le\tauD(k^2-j)\).  Hence
\[
        1_{\tauD(k^2-j)\ge j}\le \frac{\tauD(k^2-j)^2}{j^2},
\]
and therefore
\[
        a_k\le \sum_{1\le j\le 2k-2}\frac{\tauD(k^2-j)^2}{j^2}.
\]
By Cauchy's inequality and \(\sum_{j\ge1}j^{-2}<\infty\),
\[
        a_k^2
        \ll \sum_{1\le j\le 2k-2}\frac{\tauD(k^2-j)^4}{j^2}.
\]
Summing over \(k\le K\) and using Lemma \ref{lem:shifted-square-moment} with \(A=4\),
\[
\begin{aligned}
        \sum_{2\le k\le K}a_k^2
        &\ll \sum_{j\le 2K}\frac1{j^2}
              \sum_{\substack{k\le K\\ k^2>j}}\tauD(k^2-j)^4  \\
        &\ll K(\log K)^C
              \sum_{j\le2K}\frac{\mathfrak D_4(j)}{j^2}
        \ll K(\log K)^C.
\end{aligned}
\]
Since \(|E_k|\le a_k\), the same bound holds for the exit sets.
\end{proof}

\begin{theorem}[Fixed moments of the exit sets]\label{thm:E-fixed-moments}
For every fixed integer \(m\ge2\), there is a constant \(C_m\ge0\) such that
\[
        \sum_{2\le k\le K}|E_k|^m
        \le \sum_{2\le k\le K}a_k^m
        \ll_m K(\log K)^{C_m}.
\]
\end{theorem}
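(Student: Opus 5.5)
The plan mirrors the proof of \Cref{thm:E-second-moment}, with the exponents adjusted to $m$. The inequality $|E_k|\le a_k$ is already known, so only the bound for $\sum_{k\le K}a_k^m$ is needed.

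\emph{Step 1 (majorize the indicator).} If $j$ is active at level $k$, then $j\le\tauD(k^2-j)$, so
\[
        1_{\tauD(k^2-j)\ge j}\le \frac{\tauD(k^2-j)^2}{j^2}.
\]
Summing over $1\le j\le 2k-2$ gives
\[
        a_k\le\sum_{1\le j\le2k-2}\frac{\tauD(k^2-j)^2}{j^2}.
\]

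\emph{Step 2 (raise to the $m$-th power).} Apply H\"older's inequality with the probability-like weights $w_j=j^{-2}$. Since $\sum_j w_j\le\zeta(2)$, we get
\[
        \Bigl(\sum_j w_j\,\tauD(k^2-j)^2\Bigr)^m\le \zeta(2)^{m-1}\sum_j w_j\,\tauD(k^2-j)^{2m}.
\]
(Equivalently, this is Jensen's inequality for the normalized weights.) Hence
\[
        a_k^m\ll_m\sum_{1\le j\le2k-2}\frac{\tauD(k^2-j)^{2m}}{j^2}.
\]

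\emph{Step 3 (sum over $k$).} Interchange the order of summation. Use $j\le 2k-2<2K$ and the fact that $k^2>j$ is automatic for active deficits. Then apply \Cref{lem:shifted-square-moment} with the fixed exponent $A=2m$, which gives
\[
        \sum_{2\le k\le K}a_k^m\ll_m \sum_{j\le 2K}\frac1{j^2}\sum_{\substack{k\le K\\k^2>j}}\tauD(k^2-j)^{2m}
        \ll_m K(\log K)^{C_1(2m)}\sum_{j\ge1}\frac{\mathfrak D_{2m}(j)}{j^2}.
\]
The last series converges by the second assertion of that lemma. So $C_m=C_1(2m)$ works.

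\emph{Main obstacle.} There is essentially nothing new to prove at this stage. The one point needing care is that the constants in \Cref{lem:shifted-square-moment} must be uniform in $j\le2K$, including both the square shifts $j=a^2$ and the nonsquare shifts, and that the bad-prime factor $\mathfrak D_{2m}(j)$ is indeed summable against $j^{-2}$. Both facts are supplied by that lemma, which rests on $\HST$ (\Cref{thm:shifted-square-estimate}) and \Cref{prop:square-shift-estimates}. The result is therefore unconditional relative to the corrected Henriot input and does not use $\HQE$.
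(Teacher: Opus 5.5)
Your proposal is correct and is essentially the paper's own proof: majorize the activity indicator by $\tau(k^2-j)^2/j^2$, apply H\"older with weights $w_j=j^{-2}$ to get $a_k^m\ll_m\sum_{j}\tau(k^2-j)^{2m}/j^2$, then interchange the summations and invoke \Cref{lem:shifted-square-moment} with $A=2m$ together with the convergence of $\sum_j\mathfrak D_{2m}(j)/j^2$. One minor correction: $k^2>j$ holds for every $j\le 2k-2$, not only for active deficits, since $k^2-2k+2=(k-1)^2+1>0$.
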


\begin{proof}
As in the second-moment proof,
\[
        a_k\le \sum_{1\le j\le2k-2}\frac{\tauD(k^2-j)^2}{j^2}.
\]
Put \(w_j=j^{-2}\).  By H\"older's inequality in the form
\[
        \left(\sum_j w_j y_j\right)^m
        \le \left(\sum_j w_j\right)^{m-1}\sum_j w_j y_j^m,
\]
with \(y_j=\tauD(k^2-j)^2\), and using \(\sum_j j^{-2}<\infty\), we obtain
\[
        a_k^m\ll_m
        \sum_{1\le j\le2k-2}\frac{\tauD(k^2-j)^{2m}}{j^2}.
\]
Summing over \(k\le K\), interchanging the order of summation, and applying Lemma \ref{lem:shifted-square-moment} with \(A=2m\),
\[
\begin{aligned}
        \sum_{2\le k\le K}a_k^m
        &\ll_m \sum_{j\le2K}\frac1{j^2}
        \sum_{\substack{k\le K\\ k^2>j}}\tauD(k^2-j)^{2m} \\
        &\ll_m K(\log K)^{C_m}
        \sum_{j\le2K}\frac{\mathfrak D_{2m}(j)}{j^2}
        \ll_m K(\log K)^{C_m}.
\end{aligned}
\]
The final series is bounded uniformly in \(K\) by Lemma \ref{lem:shifted-square-moment}.  Since \(|E_k|\le a_k\), the same estimate holds for \(|E_k|^m\).
\end{proof}

\begin{theorem}[Truncated first moment and large-value contribution]\label{thm:E-truncated-first}
Let \(m\ge2\) be fixed, and let \(C_m\) be as in Theorem \ref{thm:E-fixed-moments}.  Uniformly for \(V\ge1\),
\[
        \sum_{\substack{2\le k\le K\\ |E_k|>V}} |E_k|
        \ll_m \frac{K(\log K)^{C_m}}{V^{m-1}},
\]
and hence
\[
        \sum_{2\le k\le K}|E_k|
        \ll_m K V+\frac{K(\log K)^{C_m}}{V^{m-1}}.
\]
The same estimates hold with \(|E_k|\) replaced by the active-deficit count \(a_k\), and also with \(|E_k|\) replaced by the one-step width \(|\AnnT_k(E_k)|\).  In particular,
\[
        \sum_{2\le k\le K}|E_k|
        \ll_m K(\log K)^{C_m/m}.
\]
Moreover, for every fixed \(\Lambda>C_m/(m-1)\), the total contribution of levels with
\(|E_k|>(\log K)^\Lambda\) is \(o(K)\).
\end{theorem}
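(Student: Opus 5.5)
The plan is a Markov/Chebyshev-type truncation applied to the fixed-moment bound of \Cref{thm:E-fixed-moments}, first for \(a_k\) and then transferred to \(|E_k|\) and \(|\AnnT_k(E_k)|\).

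\textbf{Tail bound.} On the set \(\{k:|E_k|>V\}\) we have \(|E_k|=|E_k|^m/|E_k|^{m-1}\le |E_k|^m/V^{m-1}\). Summing over these \(k\) and extending the sum to all \(2\le k\le K\), \Cref{thm:E-fixed-moments} gives
\[
\sum_{\substack{2\le k\le K\\ |E_k|>V}}|E_k|\le V^{1-m}\sum_{2\le k\le K}|E_k|^m\ll_m \frac{K(\log K)^{C_m}}{V^{m-1}},
\]
uniformly in \(V\ge1\).

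\textbf{Split first moment.} For the second display, split according to whether \(|E_k|\le V\) or \(|E_k|>V\). The first part contributes at most \(KV\), and the second is the tail bound just proved. Choosing \(V=(\log K)^{C_m/m}\) balances \(KV\) against \(K(\log K)^{C_m}V^{1-m}\), since both become \(K(\log K)^{C_m/m}\); this gives the ``in particular'' statement.

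\textbf{Large levels.} For the last claim, take \(V=(\log K)^\Lambda\). The tail bound is then \(K(\log K)^{C_m-\Lambda(m-1)}\), which is \(o(K)\) precisely when \(\Lambda(m-1)>C_m\).

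\textbf{Transfer to \(a_k\) and one-step widths.} The same argument applies verbatim to \(a_k\), because \Cref{thm:E-fixed-moments} already bounds \(\sum a_k^m\). For the one-step width, note \(|\AnnT_k(E_k)|\le|E_k|\), since the image of a finite set under a map is no larger than the set. Hence
\[
\sum_k|\AnnT_k(E_k)|^m\le\sum_k|E_k|^m,
\]
and the tail argument can be rerun with this moment bound.

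The only point needing care is the tail sum for the widths. The set \(\{|\AnnT_k(E_k)|>V\}\) is contained in \(\{|E_k|>V\}\), so one could also bound the width tail directly by the \(E_k\)-tail. Either route works, and I expect no real obstacle: the statement is a routine consequence of the moment bound, with constants depending only on \(m\).
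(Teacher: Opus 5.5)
Your proposal is correct and follows the paper's proof essentially step for step. Both use the pointwise bound $|E_k|\le |E_k|^m/V^{m-1}$ on the large-value set together with the fixed-moment theorem, split at $V$, take $V=(\log K)^{C_m/m}$ and $V=(\log K)^\Lambda$, and transfer the estimates to $a_k$ directly and to the one-step width via $|\AnnT_k(E_k)|\le|E_k|$.
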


\begin{proof}
For the large-value contribution, use \(|E_k|\le |E_k|^m/V^{m-1}\) on the set \(|E_k|>V\), and then apply Theorem \ref{thm:E-fixed-moments}.  The full first-moment bound follows by splitting the levels according to whether \(|E_k|\le V\) or \(|E_k|>V\).  The active-deficit version is identical, using the moment estimate for \(a_k\) in Theorem \ref{thm:E-fixed-moments}; the one-step width version follows from \(|\AnnT_k(E_k)|\le |E_k|\).  Taking \(V=(\log K)^{C_m/m}\) gives the displayed interpolation bound.  Finally, if \(\Lambda>C_m/(m-1)\), then
\[
        \sum_{\substack{2\le k\le K\\ |E_k|>(\log K)^\Lambda}} |E_k|
        \ll_m K(\log K)^{C_m-\Lambda(m-1)}=o(K).
\]
\end{proof}

\begin{remark}[Limits of the average theory]\label{rem:first-moment-barrier}
Theorem \ref{thm:E-truncated-first} packages the fixed-moment information into first-moment envelopes and shows that sufficiently large exit sets have negligible total contribution.  The constants in \Cref{thm:shifted-square-estimate} are not optimized here.  The elementary first-moment exponent available by root counting is \(3\); assuming \(\HST+\HQE\) it is \(2\).  In particular, these results do not prove \(\sum_{k\le K}|E_k|=O(K)\).  That linear first-moment bound remains open.  On the other hand, Corollary \ref{cor:excess-not-o} shows that the stronger density-minimal assertion \(\sum_{k\le K}(|E_k|-2)=o(K)\) is false, so any eventual linear first-moment theorem must have limiting average strictly larger than \(2\).
\end{remark}

\begin{corollary}[Fixed-moment tail bounds]\label{cor:E-fixed-tail}
For every fixed integer \(m\ge2\), uniformly for \(V\ge1\),
\[
        \#\{2\le k\le K: |E_k|>V\}
        \ll_m \frac{K(\log K)^{C_m}}{V^m}.
\]
The same estimate holds with \(|E_k|\) replaced by the one-step width \(|\AnnT_k(E_k)|\).
\end{corollary}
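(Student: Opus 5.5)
This is a Chebyshev--Markov tail estimate derived directly from the fixed-moment bound of \Cref{thm:E-fixed-moments}. Fix an integer \(m\ge2\) and \(V\ge1\). On the set of levels with \(|E_k|>V\) we have the pointwise inequality
\[
        1\le \frac{|E_k|^m}{V^m}.
\]
Summing this over \(2\le k\le K\), and extending the sum to all such levels since every term \(|E_k|^m/V^m\) is nonnegative, gives
\[
        \#\{2\le k\le K:|E_k|>V\}\le V^{-m}\sum_{2\le k\le K}|E_k|^m .
\]
By \Cref{thm:E-fixed-moments} the right-hand side is \(\ll_m K(\log K)^{C_m}V^{-m}\), with the same constant \(C_m\). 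Uniformity in \(V\) is automatic, because the implied constant in \Cref{thm:E-fixed-moments} does not depend on \(V\).

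For the one-step width, \(\AnnT_k(E_k)\) is the image of the finite set \(E_k\) under a map, so \(|\AnnT_k(E_k)|\le|E_k|\). Hence
\[
        \{k:|\AnnT_k(E_k)|>V\}\subseteq\{k:|E_k|>V\},
\]
and the bound transfers without change. Alternatively, one can apply Markov's inequality to \(|\AnnT_k(E_k)|^m\le|E_k|^m\) directly.

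No step here is a real obstacle; the only care needed is to use the \(m\)-th moment with the full power \(V^{m}\). The truncated first-moment estimate of \Cref{thm:E-truncated-first} would only give \(V^{-(m-1)}\), since \(|E_k|>V\ge1\) there, so it should not be used for this statement.
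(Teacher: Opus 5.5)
Your argument is correct and matches the paper's proof: Markov's inequality applied to \Cref{thm:E-fixed-moments}, with the one-step width handled by \(|\AnnT_k(E_k)|\le|E_k|\). One small correction to your closing aside: \Cref{thm:E-truncated-first} would also give the full \(V^{-m}\), since each counted level contributes \(|E_k|>V\) to the truncated sum, so dividing that sum by \(V\) (rather than bounding each term below by \(1\)) recovers the same bound.
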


\begin{proof}
This is Markov's inequality applied to Theorem \ref{thm:E-fixed-moments}; the final assertion follows from \(|\AnnT_k(E_k)|\le |E_k|\).
\end{proof}

\begin{corollary}[Quantitative square-tail bound]\label{cor:E-square-tail}
With the constant \(C\) from Theorem \ref{thm:E-second-moment}, uniformly for \(V\ge1\),
\[
        \#\{2\le k\le K: |E_k|>V\}
        \ll \frac{K(\log K)^C}{V^2}.
\]
In particular, for every \(\varepsilon>0\), \(|E_k|\le (\log k)^{C/2+\varepsilon}\) for a natural-density-one set of levels.
\end{corollary}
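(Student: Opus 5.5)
The first assertion follows from Markov's (Chebyshev's) inequality applied to the second moment in \Cref{thm:E-second-moment}. On the set of levels with \(|E_k|>V\), each term satisfies \(1<|E_k|^2/V^2\). Summing over \(2\le k\le K\), and enlarging the sum to all levels, gives
\[
        \#\{2\le k\le K:|E_k|>V\}\le \frac1{V^2}\sum_{2\le k\le K}|E_k|^2\ll \frac{K(\log K)^C}{V^2}.
\]
This bound is uniform in \(V\ge1\), since the implied constant is the one from \Cref{thm:E-second-moment} and does not depend on \(V\).

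For the density-one statement, fix \(\varepsilon>0\). I would use the same dyadic-density device as in \Cref{cor:E-normal-order-unconditional}, since the threshold \((\log k)^{C/2+\varepsilon}\) depends on \(k\) rather than on \(K\). The levels \(k\le K^{1/2}\) number at most \(K^{1/2}=o(K)\). For \(K^{1/2}<k\le K\) we have \(\log k\ge\frac12\log K\). So the condition \(|E_k|>(\log k)^{C/2+\varepsilon}\) implies \(|E_k|>V\) with
\[
        V=2^{-C/2-\varepsilon}(\log K)^{C/2+\varepsilon}.
\]
This \(V\) is at least \(1\) once \(K\) is large. Applying the first assertion with this \(V\) bounds the count by
\[
        \ll_\varepsilon \frac{K(\log K)^C}{(\log K)^{C+2\varepsilon}}=K(\log K)^{-2\varepsilon}=o(K).
\]
Adding the two ranges, the exceptional set has size \(o(K)\). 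Hence its complement has natural density one.

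There is no real obstacle here. The only point needing care is that the threshold depends on \(k\), which is why the range is split at \(K^{1/2}\). The case \(C=0\) needs no separate treatment, since the exponent \(\varepsilon>0\) still produces the saving \((\log K)^{-2\varepsilon}\).
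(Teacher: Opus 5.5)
Your proof is correct and follows the paper's argument. The paper applies Chebyshev's inequality to the second moment of \Cref{thm:E-second-moment}, then uses the same split at \(K^{1/2}\) (the device of \Cref{cor:E-normal-order-unconditional}) to handle the \(k\)-dependent threshold in the density-one statement.
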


\begin{proof}
This is Chebyshev's inequality applied to Theorem \ref{thm:E-second-moment}, followed by the usual dyadic decomposition for the density-one assertion.
\end{proof}

\begin{corollary}[Average and moments for one-step widths]\label{cor:average-confluence-width}
With \(U_k=\AnnT_k(E_k)\), unconditionally
\[
        \sum_{2\le k\le K}|U_k|\ll K(\log K)^3.
\]
There is a constant \(C\) such that
\[
        \sum_{2\le k\le K}|U_k|^2\ll K(\log K)^C.
\]
Assuming \(\HST+\HQE\), the first bound improves to
\[
        \sum_{2\le k\le K}|U_k|\ll K(\log K)^2.
\]
Consequently the same square-tail, fixed-moment tail, and truncated
first-moment estimates hold for \(|U_k|\), with the same constants as the corresponding estimates for \(|E_k|\).
\end{corollary}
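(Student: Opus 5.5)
The plan is to reduce everything to the corresponding statements for \(E_k\) via the pointwise inequality
\[
        |U_k|=|\AnnT_k(E_k)|\le |E_k|\le a_k ,
\]
which holds because the image of a finite set under a map has cardinality at most that of the set (as in \Cref{prop:ultimate-widths}), and because \(|E_k|\le a_k\) by definition. Since all the quantities are nonnegative, any upper bound for a sum of a nondecreasing function of \(|E_k|\) over \(2\le k\le K\) transfers to the same sum with \(|U_k|\).

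\emph{First moment.} I would apply \Cref{thm:E-average-elementary} to get \(\sum_{k\le K}|U_k|\le\sum_{k\le K}a_k\ll K(\log K)^3\) unconditionally.

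\emph{Second moment.} Squaring the pointwise inequality, \(|U_k|^2\le|E_k|^2\). Then \Cref{thm:E-second-moment} gives \(\sum_{k\le K}|U_k|^2\ll K(\log K)^C\) with the same constant \(C\).

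\emph{Conditional refinement.} Under \(\HST+\HQE\), \Cref{thm:E-average} gives \(\sum_{k\le K} a_k\ll K(\log K)^2\), and the pointwise bound then transfers this to \(\sum_{k\le K}|U_k|\).

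\emph{Tail and truncated estimates.} For fixed \(m\ge2\), \(|U_k|^m\le a_k^m\), so the fixed-moment bound of \Cref{thm:E-fixed-moments} holds for \(|U_k|\). The Markov, Chebyshev and truncation arguments in \Cref{thm:E-truncated-first}, \Cref{cor:E-fixed-tail} and \Cref{cor:E-square-tail} use only the moment inputs, so they run verbatim with \(|U_k|\) in place of \(|E_k|\). Some of these are already stated for the one-step width, and the square-tail bound follows the same way from the second-moment bound just proved.

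There is no real obstacle. The only point needing care is that the truncated estimate on the set \(\{|U_k|>V\}\) must be bounded using moments of \(|U_k|\) itself (namely \(|U_k|\le|U_k|^m/V^{m-1}\) there), not by restricting the \(E_k\) estimate to a different index set. Since the moment bounds for \(|U_k|\) are available, this goes through with the same constants.
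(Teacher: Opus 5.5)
Your proposal is correct and follows the paper's proof: both derive every estimate from the pointwise bound \(|U_k|=|\AnnT_k(E_k)|\le|E_k|\le a_k\), together with \Cref{thm:E-average-elementary}, \Cref{thm:E-average}, \Cref{thm:E-second-moment}, and \Cref{cor:E-fixed-tail}. Your closing caveat is harmless but unnecessary: \(\{k:|U_k|>V\}\subseteq\{k:|E_k|>V\}\) and \(|U_k|\le|E_k|\) on that set, so restricting the \(E_k\) truncated and tail estimates also works directly.
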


\begin{proof}
These estimates follow immediately from
\(|U_k|=|\AnnT_k(E_k)|\le |E_k|\), from the elementary average theorem
\Cref{thm:E-average-elementary}, from the refined theorem \Cref{thm:E-average}, and from the second-moment theorem
\Cref{thm:E-second-moment}.  The fixed-moment tail estimates follow in the same way
from Corollary \ref{cor:E-fixed-tail}.
\end{proof}

\subsection*{The parity obstruction to two-branch collapse}

The moment estimates above control large values of the one-step width, but they
cannot by themselves prove two-branch collapse.  The transfer
parity law gives an exact decomposition into two same-parity families, and the
remaining issue is genuinely dynamical: within each family, different offsets
must be shown to have the same first-entry image.

For a finite set of nonnegative offsets \(A\), write
\[
        A^{\EvPlus}=A\cap 2\Z\cap\{1,2,3,\ldots\},\qquad
        A^{\OddZero}=\bigl(A\cap(2\Z+1)\bigr)\cup(A\cap\{0\}).
\]
These are source classes, named by the parity mechanism in \Cref{prop:parity}:
the transfer images of \(E_k^{\EvPlus}\) are odd offsets in the next annulus,
whereas the transfer images of \(E_k^{\OddZero}\) are even offsets.

\begin{proposition}[Parity decomposition of the one-step frontier]\label{prop:exact-two-branch-criterion}
For every \(k\ge2\), put \(U_k=\AnnT_k(E_k)\).  Then
\[
        U_k
        =\AnnT_k(E_k^{\EvPlus})
         \sqcup \AnnT_k(E_k^{\OddZero}),
\]
where the first set consists only of odd offsets and the second only of even offsets.  In particular,
\[
        |U_k|
        =\left|\AnnT_k(E_k^{\EvPlus})\right|
         +\left|\AnnT_k(E_k^{\OddZero})\right|.
\]
Consequently, if both same-parity families have at most one image, then \(|U_k|\le2\).  Conversely, any failure of this same-parity condition either forces \(|U_k|>2\), or else \(U_k\) is a two-point set whose two elements have the same parity.
\end{proposition}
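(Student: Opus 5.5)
The plan is to derive everything from the transfer parity law, \Cref{prop:parity}, applied at level \(k\) to the elements of \(E_k\subseteq\{0,1,\ldots,2k\}\). First I check that \(E_k\) is a legitimate set of sources for \(\AnnT_k\), so that \(U_k=\AnnT_k(E_k)\) is defined. By \Cref{prop:E-upper} each \(r\in E_k\) satisfies \(r\le D_k<2k\). Alternatively, \(r=\tauD(k^2-j)-j<2k-1\) directly. Hence \(E_k\) is a set of valid offsets in \(I_k\).

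Next, \(E_k\) is the disjoint union of \(E_k^{\EvPlus}\) and \(E_k^{\OddZero}\). Every element is either a positive even number, an odd number, or \(0\), and these three cases are mutually exclusive. The image of a union is the union of the images, so
\[
        U_k=\AnnT_k(E_k^{\EvPlus})\cup\AnnT_k(E_k^{\OddZero}).
\]
I then apply \Cref{prop:parity} to each piece.

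\emph{Positive even sources.} If \(r\in E_k^{\EvPlus}\), then \(r>0\), so \(\AnnT_k(r)\equiv r+1\equiv1\pmod2\). The image is odd.

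\emph{Odd sources.} If \(r\) is odd, then \(r>0\), so \(\AnnT_k(r)\equiv r+1\equiv0\pmod2\). The image is even.

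\emph{The zero source.} If \(r=0\), the second clause of \Cref{prop:parity} gives \(\AnnT_k(0)\equiv0\pmod2\). The image is even.

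So the first image set consists of odd offsets and the second of even offsets. The two sets are therefore disjoint, and the union is the displayed disjoint union. The cardinality identity follows immediately from disjointness.

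For the consequences, if each of the two image sets has at most one element, then \(|U_k|\le1+1=2\). For the converse, suppose the same-parity condition fails, so that one family, say \(\AnnT_k(E_k^{\EvPlus})\) (the other case is symmetric), has at least two images. If the other family is nonempty, then \(|U_k|\ge3>2\). Otherwise \(U_k\) equals the failing family. If \(|U_k|=2\), that family consists of two offsets of a single parity, since it lies entirely in one parity class. If \(|U_k|>2\), we are again in the first alternative.

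I expect no real obstacle here. The only point needing care is the exceptional source \(0\), whose image parity differs from the generic rule \(r\mapsto r+1\). This is exactly why \(0\) is grouped with the odd offsets in \(A^{\OddZero}\), and it is handled by the separate clause of \Cref{prop:parity}.
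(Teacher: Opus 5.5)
Your proposal is correct and follows essentially the same route as the paper: you apply the transfer parity law to each source class to show the two image sets lie in opposite parity classes, and you handle the converse by the same case split on whether the other family is empty. Your extra check that \(E_k\subseteq\{0,\ldots,2k\}\), so that \(\AnnT_k\) is defined on \(E_k\), is a harmless addition that the paper leaves implicit.
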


\begin{proof}
By the parity law, if \(r>0\), then
\[
        \AnnT_k(r)\equiv r+1\pmod2,
\]
while \(\AnnT_k(0)\) is even.  Thus positive even offsets map to odd offsets, and odd offsets together with the possible offset \(0\) map to even offsets.  The two displayed image sets are therefore disjoint, and the cardinality formula follows.

If each image set has size at most one, their disjoint union has size at most two.  Conversely, suppose one of the two image sets has at least two elements.  If the other image set is nonempty, then \(|U_k|>2\).  If the other image set is empty and \(|U_k|\le2\), then \(U_k\) consists of exactly two elements, both lying in the same parity class.  This proves the final assertion.
\end{proof}

\begin{definition}[Same-parity annular coalescence]\label{def:same-parity-annular-coalescence}
A level \(k\ge2\) is called \emph{same-parity coalescent} if each of the two sets
\(E_k^{\EvPlus}\) and \(E_k^{\OddZero}\) has at most one image under
\(\AnnT_k\).
\end{definition}

\begin{definition}[One-step two-branch collapse]\label{def:two-branch-collapse}
We say that level \(k\ge2\) has \emph{one-step two-branch collapse} if
\[
        |\AnnT_k(E_k)|\le2.
\]
We say that one-step two-branch collapse holds on a density-one set of levels if
\[
        \#\{2\le k\le K:|\AnnT_k(E_k)|>2\}=o(K).
\]
\end{definition}

\begin{corollary}[Two-branch collapse versus same-parity coalescence]\label{cor:two-branch-equivalence}
Same-parity coalescence at level \(k\) implies \(|\AnnT_k(E_k)|\le2\).  Moreover,
\[
\begin{aligned}
&\#\{2\le k\le K:\text{ level }k\text{ is not same-parity coalescent}\}\\
&\qquad\le
\#\{2\le k\le K:|\AnnT_k(E_k)|>2\}
 +\#\{2\le k\le K:\AnnT_k(E_k)\text{ is a two-point same-parity set}\}.
\end{aligned}
\]
Hence density-one same-parity coalescence implies density-one two-branch collapse.  Conversely, density-one two-branch collapse implies density-one same-parity coalescence provided the two-point same-parity exceptional levels have density zero.
\end{corollary}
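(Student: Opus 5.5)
The plan is to read everything off \Cref{prop:exact-two-branch-criterion}, working one level at a time. Fix \(k\ge2\) and put \(U_k=\AnnT_k(E_k)\). That proposition gives the disjoint decomposition
\[
U_k=\AnnT_k(E_k^{\EvPlus})\sqcup\AnnT_k(E_k^{\OddZero}),
\]
with the first piece consisting only of odd offsets and the second only of even offsets. Two pointwise facts will follow directly from it.

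\textbf{First claim.} Suppose level \(k\) is same-parity coalescent. Then each piece of the decomposition has at most one element. The cardinality formula then gives \(|U_k|\le2\).

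\textbf{Counting inequality.} Now suppose level \(k\) is \emph{not} same-parity coalescent. The final assertion of \Cref{prop:exact-two-branch-criterion} says that exactly one of two things happens: either \(|U_k|>2\), or \(U_k\) is a two-point set whose two elements have the same parity. So the set of non-coalescent levels in \([2,K]\) is contained in the union of two sets: the levels with \(|U_k|>2\), and the levels at which \(U_k\) is a two-point same-parity set. Counting this union and applying the union bound gives the displayed inequality.

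\textbf{Density statements.} For the forward implication, the first claim shows that \(\{k:|U_k|>2\}\) is contained in the set of non-coalescent levels. Hence if the non-coalescent levels have density zero, so do the levels with \(|U_k|>2\); this is density-one two-branch collapse. For the converse, assume two-branch collapse holds on a density-one set and that the two-point same-parity exceptional levels have density zero. Then both terms on the right-hand side of the displayed inequality are \(o(K)\), so the left-hand side is \(o(K)\), which is density-one same-parity coalescence.

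There is no substantial obstacle here. The only point needing care is the dichotomy in the final assertion of the proposition. If one piece has at least two images and the other piece is nonempty, then \(|U_k|\ge3\). If instead the other piece is empty, then \(U_k\) is that single piece. In that case either \(|U_k|>2\), or \(|U_k|=2\) and both points have the same parity, since they lie in one parity class.
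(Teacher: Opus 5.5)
Your proposal is correct and follows the paper's own argument: both claims are read off from the parity decomposition in \Cref{prop:exact-two-branch-criterion}, and the counting inequality is the union bound applied to its final dichotomy. You are slightly more careful than the paper's proof in noting that the forward density implication comes from the first assertion, via the inclusion $\{k:|\AnnT_k(E_k)|>2\}\subseteq\{k:\text{level }k\text{ not same-parity coalescent}\}$, rather than from the displayed inequality.
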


\begin{proof}
The first assertion follows immediately from \Cref{prop:exact-two-branch-criterion}.  For the counting inequality, a non-same-parity-coalescent level either has \(|U_k|>2\), or else the final alternative in \Cref{prop:exact-two-branch-criterion} holds.  The density statements are the corresponding asymptotic forms of this inequality.
\end{proof}

\begin{corollary}[Large-excess bounds for one-step widths]\label{cor:U-excess-moments}
For every fixed integer \(m\ge2\) there is \(C_m\ge0\) such that
\[
        \sum_{2\le k\le K}\bigl(|U_k|-2\bigr)_+^m
        \ll_m K(\log K)^{C_m}.
\]
Consequently, uniformly for \(V\ge1\),
\[
        \#\{2\le k\le K: |U_k|>2+V\}
        \ll_m \frac{K(\log K)^{C_m}}{V^m}.
\]
\end{corollary}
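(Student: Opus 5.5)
The plan is a pointwise domination followed by an appeal to the fixed-moment theorem.

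\textbf{Pointwise domination.} Since \(U_k=\AnnT_k(E_k)\) is the image of the finite set \(E_k\), we have \(|U_k|\le|E_k|\le a_k\). Because \((\cdot)_+\) is nondecreasing and \(t-2\le t\),
\[
        \bigl(|U_k|-2\bigr)_+\le |U_k|\le a_k .
\]
Since all these quantities are nonnegative, raising to the \(m\)th power gives
\[
        \bigl(|U_k|-2\bigr)_+^m\le a_k^m .
\]

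\textbf{Summing.} Summing over \(2\le k\le K\) and invoking \Cref{thm:E-fixed-moments} for the active-deficit counts \(a_k\) yields
\[
        \sum_{2\le k\le K}\bigl(|U_k|-2\bigr)_+^m\le\sum_{2\le k\le K}a_k^m\ll_m K(\log K)^{C_m},
\]
with the same constant \(C_m\). No parity refinement from \Cref{prop:exact-two-branch-criterion} is needed, since the excess over the two-branch baseline is only being bounded from above, and discarding the \(-2\) costs nothing at the level of polylogarithmic exponents.

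\textbf{Tail bound.} On the set where \(|U_k|>2+V\) we have \((|U_k|-2)_+>V\), hence \((|U_k|-2)_+^m>V^m\). Markov's inequality then gives
\[
        \#\{2\le k\le K:|U_k|>2+V\}\le V^{-m}\sum_{2\le k\le K}\bigl(|U_k|-2\bigr)_+^m\ll_m \frac{K(\log K)^{C_m}}{V^m},
\]
uniformly for \(V\ge1\).

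\textbf{Main dependency.} The only step with real content is the fixed-moment bound itself, which rests on \(\HST\) through \Cref{lem:shifted-square-moment}. That bound is already established, so I expect no obstacle. The only care needed is to state that \(C_m\) may be taken to be the same constant as in \Cref{thm:E-fixed-moments}, and that the estimate is unconditional relative to \(\HST\) and does not use \(\HQE\).
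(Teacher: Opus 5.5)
Your proposal is correct and follows the paper's proof: the paper dominates \((|U_k|-2)_+\le|U_k|\le|E_k|\), applies \Cref{thm:E-fixed-moments} to get the moment bound, and obtains the tail estimate by Markov's inequality. Dominating by \(a_k\) instead of \(|E_k|\) changes nothing, since that theorem bounds both quantities with the same constant \(C_m\).
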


\begin{proof}
Since \((|U_k|-2)_+\le |U_k|\le |E_k|\), the moment bound follows from
\Cref{thm:E-fixed-moments}.  The tail estimate is Markov's inequality.
\end{proof}

\begin{remark}[What remains open in the two-branch problem]\label{rem:two-branch-open}
\Cref{prop:exact-two-branch-criterion} shows why two-branch collapse is more rigid than a smallness statement for \(E_k\): one must prove that positive even exit offsets usually share one first-entry image, and that odd exit offsets, together with the possible offset \(0\), usually share one first-entry image.  The fixed-moment bounds in \Cref{cor:U-excess-moments} rule out frequent large excess, but they do not reach the fixed threshold \(|U_k|>2\), nor do they rule out a two-point same-parity exceptional image.  A proof of density-one two-branch collapse therefore requires genuinely dynamical information about same-parity annular coalescence, not only divisor-moment estimates for the static frontier \(E_k\).
\end{remark}

\begin{remark}[Confluence interpretation]
The one-step set \(U_k=\AnnT_k(E_k)\) is the case \(s=1\) of Definition \ref{def:confluence-widths}.  Proposition \ref{prop:dynamic-frontier} gives \(R(k^2-1)\le |U_k|\), and Corollary \ref{cor:dynamic-confluence} shows that \(\liminf |U_k|=1\) would imply connectedness.  The moment bounds, including all fixed moments, supply typical-size and tail information for this dynamical bottleneck, but they do not by themselves imply a density-one two-branch theorem.
\end{remark}

\section{Lower-runner races}\label{sec:races}

Consider two forward orbits.  At a race state, let their current values be \(p<q\).  Advance the lower value \(p\) to \(\Tmap(p)\), then reorder.  If \(\Tmap(p)=q\), the two orbits coalesce and the race terminates.  Otherwise, writing \(g=q-p\), the next gap is
\begin{equation}\label{eq:gap-update}
        |\tauD(p)-g|
\end{equation}
and the next lower value is
\begin{equation}\label{eq:lower-update}
        p+\min(g,\tauD(p)).
\end{equation}

\begin{lemma}\label{lem:race-detects}
The lower-runner race terminates if and only if the two forward orbits coalesce.
\end{lemma}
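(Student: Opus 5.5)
We prove the two directions separately. Both rest on one observation: at every race state the two current values are members of the respective forward orbits, and the lower runner always advances along its own orbit. We take the race to start at the initial values \(x<y\); the case \(x=y\) is trivial coalescence.

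\emph{Termination implies coalescence.} If the race terminates, then at some state \((p,q)\) we have \(\Tmap(p)=q\). Here \(p=\Tmap^i(x)\) and \(q=\Tmap^j(y)\), or the same with the roles of the two orbits swapped. The equality \(\Tmap(p)=q\) then gives \(\Tmap^{i+1}(x)=\Tmap^j(y)\), or its symmetric counterpart. By definition this is \(x\sim y\), which by \Cref{lem:equivalence} is coalescence.

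\emph{Coalescence implies termination.} Suppose \(\Tmap^a(x)=\Tmap^b(y)=z\) with \(a,b\) minimal. As long as the race has not terminated, each step advances exactly one runner by one application of \(\Tmap\), and the two current values are never equal. To see the latter, note that the update rule \eqref{eq:gap-update} gives a new gap \(|\tauD(p)-g|\), and in the nonterminating case \(\tauD(p)\ne g\), so the gap stays positive.

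We now argue by contradiction. Suppose the race never terminates. Orbits are strictly increasing, and the lower runner advances by at least \(1\) at each step, so the lower value tends to infinity. Hence there is a first step at which some runner's value reaches or passes \(z\). Call this runner \(A\), so that \(A\) was the lower runner at \(p<z\) and \(\Tmap(p)\ge z\).

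Since \(z\) lies on \(A\)'s orbit and orbits are strictly increasing, \(\Tmap(p)\ge z\) forces \(\Tmap(p)=z\). The other runner \(B\) has current value \(q>p\). Because this is the first step at which anyone reaches \(z\), we also have \(q\le z\). Now \(q\) lies on \(B\)'s orbit, which passes through \(z\), and \(q\le z\). So \(B\)'s orbit from \(q\) hits \(z\).

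If \(q=z\), then \(\Tmap(p)=q\), so the race terminates at this step, a contradiction.

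If \(q<z\), then at this step \(A\) moves to \(z\) and \(B\) stays at \(q<z\). After reordering, \(B\) is the lower runner, and it keeps advancing along its orbit while it remains below \(z\). Every value on \(B\)'s orbit before \(z\) is less than \(z\), so \(B\) stays the lower runner the whole time. Eventually \(B\) reaches a value \(q'\) with \(\Tmap(q')=z\), which equals the upper value, and the race terminates, again a contradiction.

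The main subtlety is justifying that the runner which hits \(z\) lands exactly on \(z\) rather than jumping past it. This uses strict monotonicity of orbits together with the fact that \(z\) lies on both orbits. One must also make sure that the "first crossing" step is well defined; this follows because the lower value tends to infinity.
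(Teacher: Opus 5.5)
Your proof is correct and uses the same key observation as the paper's: a runner sitting at the common value is never advanced while the other runner is still below it, so the race cannot skip past that value. The paper argues directly, taking the least common value \(w\) with first indices \(I,J\) and showing that the index pair stays in the rectangle \(0\le i\le I\), \(0\le j\le J\), so the race terminates within \(I+J\) steps; you instead argue by contradiction through the first crossing of \(z\), which differs only in packaging.
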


\begin{proof}
Let the two orbits be \(x_i=\Tmap^i(x_0)\) and \(y_j=\Tmap^j(y_0)\).  If the
race terminates, the current two values are equal, so the two orbits coalesce.

Conversely, suppose the two orbits coalesce, and let \(w\) be their least common value.
Choose the first indices \(I,J\) such that
\[
        x_I=y_J=w .
\]
Before index \(I\), the \(x\)-orbit is strictly below \(w\); before index \(J\), the
\(y\)-orbit is strictly below \(w\).  During the lower-runner race, as long as termination
has not occurred, the race advances exactly one of the two current orbit indices, and it
never advances an orbit value equal to \(w\) while the other current value is still below
\(w\), because \(w\) is then the upper value.  Hence the pair of indices remains in the
rectangle
\[
        0\le i\le I,\qquad 0\le j\le J
\]
until termination.  Since at each nonterminal step one of the indices increases, after at
most \(I+J\) steps both current values are \(w\), and the race terminates.
\end{proof}

\section{CRT blocks and large jumps}\label{sec:large-jumps}

We need a quantitative form of the standard Chinese-remainder construction of short blocks in which every integer has many divisors.  The primorial form below keeps the scale explicit.

\begin{definition}[Primorial block modulus]
Let \(p_1,p_2,\ldots\) denote the primes in increasing order and put
\[
        P_K=\prod_{u\le K}p_u .
\]
For integers \(L\ge1\), \(B\ge2\), set
\[
        a(B)=\left\lceil\frac{\log(B+1)}{\log 2}\right\rceil,
        \qquad K(L,B)=L a(B).
\]
\end{definition}

\begin{lemma}[High-divisor blocks, primorial form]\label{lem:crt-block}
For every pair of integers \(L\ge1\), \(B\ge2\), and every \(Z\ge1\), there exists an integer \(M\) with
\[
        Z\le M<Z+P_{K(L,B)}
\]
such that
\[
        \tauD(M+i)>B\qquad(1\le i\le L).
\]
Moreover, as \(K\to\infty\),
\begin{equation}\label{eq:Q-asymp}
        \log P_K=(1+o(1))K\log K,
\end{equation}
and hence, whenever \(L a(B)\to\infty\),
\begin{equation}\label{eq:crt-asymp-general}
        \log P_{K(L,B)}=
        (1+o(1))L a(B)\log(L a(B)).
\end{equation}
If in addition \(B\to\infty\), then
\begin{equation}\label{eq:crt-asymp}
        \log P_{K(L,B)}=
        \left(\frac{1}{\log 2}+o(1)\right)L\log B\log(L\log B).
\end{equation}
In particular, there is an absolute constant \(C>0\) such that
\begin{equation}\label{eq:crt-uniform}
        P_{K(L,B)}\le \exp\!\bigl(C L\log B\log(2L\log B)\bigr)
        \qquad(L\ge1,\ B\ge2).
\end{equation}
\end{lemma}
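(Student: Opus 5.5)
The plan is a direct CRT construction followed by prime-number-theorem bookkeeping.

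\emph{Construction.} Partition the first \(K=K(L,B)=La(B)\) primes into \(L\) disjoint groups \(G_1,\dots,G_L\), each of size exactly \(a(B)\); for concreteness take \(G_i=\{p_u:(i-1)a(B)<u\le ia(B)\}\). Put \(q_i=\prod_{p\in G_i}p\). These moduli are pairwise coprime and their product is \(P_K\). By the Chinese remainder theorem there is a residue class \(M\bmod P_K\) with
\[
        M\equiv -i\pmod{q_i}\qquad(1\le i\le L).
\]
Every interval \([Z,Z+P_K)\) contains exactly one representative of this class, and we take \(M\) to be that representative. Then \(q_i\mid M+i\), and \(q_i\) is squarefree with \(a(B)\) prime factors. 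Hence
\[
        \tauD(M+i)\ge\tauD(q_i)=2^{a(B)}\ge B+1>B.
\]
Note that \(M+i\ge1\) because \(M\ge Z\ge1\), so \(\tauD\) is applied to positive integers. This proves the existence statement.

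\emph{Asymptotics.} By the prime number theorem, \(\log P_K=\vartheta(p_K)\), with \(\vartheta(x)\sim x\) and \(p_K\sim K\log K\). This gives \eqref{eq:Q-asymp}. Substituting \(K=La(B)\to\infty\) gives \eqref{eq:crt-asymp-general}.

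If \(B\to\infty\), then \(a(B)=(1/\log2+o(1))\log B\), and
\[
        \log(La(B))=\log(L\log B)+O(1)=(1+o(1))\log(L\log B),
\]
since \(L\log B\to\infty\). Together these give \eqref{eq:crt-asymp}.

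\emph{Uniform bound.} For \eqref{eq:crt-uniform}, use the Chebyshev-type bounds \(\vartheta(x)\le 2x\log 2\) and \(p_K\le 2K\log(K+1)\) for \(K\ge1\); both follow from standard elementary estimates. These give
\[
        \log P_K\ll K\log(2K).
\]
Since \(a(B)\le 1+\log(B+1)/\log 2\ll\log B\) for \(B\ge2\), we have \(K\ll L\log B\). Moreover
\[
        \log(2K)\ll\log(2L\log B),
\]
which holds because \(L\log B\ge\log 2\) keeps \(\log(2L\log B)\) bounded below by a positive constant.

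The main obstacle is only bookkeeping: making the \(o(1)\) terms in the final asymptotic legitimate when \(L\) is fixed but \(B\to\infty\). This is handled by noting that \(K\to\infty\) in every regime where \(B\to\infty\).
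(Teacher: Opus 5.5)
Your proof is correct and follows essentially the same route as the paper. You use a CRT construction on $L$ consecutive blocks of $a(B)$ primes, the prime number theorem for the asymptotics, and a Chebyshev-type bound giving $\log P_K\ll K\log(2K)$ for the uniform estimate. The only slip is cosmetic: $p_K\le 2K\log(K+1)$ fails at $K=1$ (since $p_1=2$), but this does not matter because $a(B)\ge2$ forces $K=La(B)\ge2$, and only an implied constant is needed anyway.
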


\begin{proof}
Let \(a=a(B)\), so that \(2^a>B\), and let \(K=La\).  Split the first \(K\) primes into \(L\) disjoint consecutive blocks of length \(a\), and let \(Q_i\) be the product of the primes in the \(i\)-th block.  Then the \(Q_i\) are pairwise coprime, squarefree, and each has exactly \(a\) prime factors, while \(\prod_i Q_i=P_K\).  By the Chinese remainder theorem there is a residue class \(M_0\pmod {P_K}\) satisfying
\[
        M_0+i\equiv0\pmod {Q_i}\qquad(1\le i\le L).
\]
Choose \(M\equiv M_0\pmod {P_K}\) with \(Z\le M<Z+P_K\).  Then \(Q_i\mid M+i\), so \(M+i\) has at least \(a\) distinct prime divisors and therefore
\[
        \tauD(M+i)\ge2^a>B.
\]
The prime number theorem gives \(\sum_{u\le K}\log p_u=(1+o(1))K\log K\), proving \eqref{eq:Q-asymp}.  Substituting \(K=L a(B)\) gives \eqref{eq:crt-asymp-general} when \(L a(B)\to\infty\).  If also \(B\to\infty\), then \(a(B)=(1/\log 2+o(1))\log B\), and \eqref{eq:crt-asymp} follows.  The uniform bound \eqref{eq:crt-uniform} follows, for example, from the standard estimate \(p_K\ll K\log(2K)\).
\end{proof}

\begin{theorem}[Large jumps on every orbit]\label{thm:orbit-jumps}
For every starting value \(n\in\N\) and every integer \(G\ge2\), some iterate \(x=\Tmap^t(n)\) satisfies
\begin{equation}\label{eq:orbit-exact-scale}
        x\le n+P_{(G+1)\left\lceil\frac{\log(G+1)}{\log 2}\right\rceil}+G+1
        \qquad\text{and}\qquad
        \tauD(x)>G.
\end{equation}
In particular, for some absolute constant \(C>0\),
\begin{equation}\label{eq:orbit-uniform-scale}
        x\le n+\exp\!\bigl(CG\log G\log(G\log G)\bigr),
\end{equation}
and more sharply
\begin{equation}\label{eq:orbit-asymp-scale}
        \log P_{(G+1)\left\lceil\frac{\log(G+1)}{\log 2}\right\rceil}
        =\left(\frac{1}{\log 2}+o(1)\right)G\log G\log(G\log G).
\end{equation}
Consequently, for every fixed starting value \(n\),
\begin{equation}\label{eq:orbit-growth}
        \max_{\substack{t\ge0\\\Tmap^t(n)\le n+Y}}\tauD(\Tmap^t(n))
        \ge (\log 2-o(1))\frac{\log Y}{(\log\log Y)^2}
\end{equation}
as \(Y\to\infty\), and in particular \(\limsup_{t\to\infty}\tauD(\Tmap^t(n))=\infty\).
\end{theorem}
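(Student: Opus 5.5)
The plan is to combine the CRT block lemma (\Cref{lem:crt-block}) with the observation that an orbit cannot jump over a long block of integers with many divisors unless it has already made a large jump. Set \(L=G+1\) and \(B=G\). Apply \Cref{lem:crt-block} with \(Z=n\); this gives \(M\) with \(n\le M<n+P_{K(G+1,G)}\) such that \(\tauD(M+i)>G\) for \(1\le i\le G+1\). Here \(K(G+1,G)=(G+1)\lceil\log(G+1)/\log2\rceil\), which is exactly the subscript in \eqref{eq:orbit-exact-scale}.

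Now follow the orbit of \(n\). It is strictly increasing and starts at \(n\le M<M+1\), so it eventually exceeds \(M+G+1\). Let \(y\) be the last orbit value with \(y\le M\), and consider the next value \(\Tmap(y)\).
\begin{itemize}
\item If \(\tauD(y)>G\), take \(x=y\). Then \(x\le M\), which already satisfies the size bound.
\item Otherwise \(\tauD(y)\le G\), so \(M<\Tmap(y)\le M+G\). Hence \(\Tmap(y)=M+i\) for some \(1\le i\le G\le L\), and therefore \(\tauD(\Tmap(y))>G\). Take \(x=\Tmap(y)\le M+G\).
\end{itemize}
In both cases \(x<n+P_{K}+G+1\), which gives \eqref{eq:orbit-exact-scale}. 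This step is elementary. The only point needing care is that the block covers \(G\) consecutive integers after \(M\), and a step of size at most \(G\) cannot skip them.

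The bound \eqref{eq:orbit-uniform-scale} follows from \eqref{eq:crt-uniform} with \(L=G+1\) and \(B=G\). Here \(L\log B\asymp G\log G\), and the additive \(G+1\) is absorbed after enlarging \(C\). The asymptotic \eqref{eq:orbit-asymp-scale} follows from \eqref{eq:crt-asymp} with \(L=G+1\) and \(B=G\to\infty\). This gives \((1/\log2+o(1))(G+1)\log G\log((G+1)\log G)\), and replacing \(G+1\) by \(G\) costs only \(1+o(1)\).

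For \eqref{eq:orbit-growth}, invert the scale. Given large \(Y\), choose the largest \(G\) with
\[
n+P_{K(G+1,G)}+G+1\le n+Y;
\]
for fixed \(n\) and large \(Y\) this is essentially \(\log P_{K}\le\log Y-o(1)\). By \eqref{eq:orbit-asymp-scale}, this holds once
\[
\left(\tfrac1{\log2}+\epsilon\right)G\log G\log(G\log G)\le\log Y,
\]
and \(G\log G\log(G\log G)\sim G(\log G)^2\). With \(\log G\sim\log\log Y\), this yields
\[
G\ge(\log2-o(1))\frac{\log Y}{(\log\log Y)^2}.
\]
The orbit then contains some \(x\le n+Y\) with \(\tauD(x)>G\). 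Letting \(Y\to\infty\) gives the limsup statement.

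The main obstacle is this inversion, specifically keeping the \(o(1)\) terms honest. I would handle it by writing \(G=\lfloor c\log Y/(\log\log Y)^2\rfloor\) with \(c<\log 2\) fixed, checking directly that \(\log P_K\le(1/\log2+o(1))\,c\log Y<\log Y-1\) for large \(Y\), and then letting \(c\uparrow\log2\).
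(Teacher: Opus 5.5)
Your proposal is correct and follows the paper's proof essentially step for step. You use the CRT block from \Cref{lem:crt-block} with \(L=G+1\), \(B=G\), \(Z=n\); the observation that steps of size at most \(G\) cannot skip the block; the scales read off from \eqref{eq:crt-uniform} and \eqref{eq:crt-asymp}; and the inversion with \(G=\lfloor c\log Y/(\log\log Y)^2\rfloor\), \(c=\log 2-\eta\), followed by \(\eta\downarrow 0\). Your ``last orbit value \(y\le M\)'' formulation is a slightly tidier way of making the no-skipping step explicit, and it shows that only \(G\) of the \(G+1\) block entries are needed, but it is the same argument.
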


\begin{proof}
Apply Lemma \ref{lem:crt-block} with \(L=G+1\), \(B=G\), and \(Z=n\).  We get a block
\[
        M+1,\ldots,M+G+1
\]
with \(M<n+P_{(G+1)\left\lceil\frac{\log(G+1)}{\log 2}\right\rceil}\), such that every integer in the block has more than \(G\) divisors.  Follow the orbit \(x_{t+1}=x_t+\tauD(x_t)\).  If \(\tauD(x_t)>G\) before the orbit passes the block, we are done.  Otherwise every step up to that point has length at most \(G\), so the strictly increasing orbit cannot jump over \(G+1\) consecutive integers.  It must land in the block, where \(\tauD>G\).  The uniform and asymptotic scales follow from Lemma \ref{lem:crt-block}.  To invert \eqref{eq:orbit-asymp-scale}, fix \(\eta>0\) and take \(G=\lfloor(\log 2-\eta)\log Y/(\log\log Y)^2\rfloor\); then the right-hand side of \eqref{eq:orbit-asymp-scale} is at most \((1+o(1))(1-\eta/\log 2)\log Y\), so the required iterate lies below \(n+Y\) for all sufficiently large \(Y\).  Letting \(\eta\downarrow0\) gives \eqref{eq:orbit-growth}.
\end{proof}

For a set \(A\subseteq\N_0\) of time indices, write
\[
        \underline d(A)=\liminf_{N\to\infty}\frac{|A\cap\{0,1,\ldots,N-1\}|}{N}
\]
for its lower asymptotic density.  In orbit statements this density is taken in
iterate time; in race statements it is taken in race-move time.

\begin{corollary}[Syndetic large jumps along every orbit]\label{cor:syndetic-orbit-jumps}
Fix \(G\ge2\), and set
\[
        Q_G=P_{(G+1)\left\lceil\frac{\log(G+1)}{\log 2}\right\rceil},
        \qquad L_G=Q_G+G+1.
\]
For every orbit \(x_t=\Tmap^t(n)\) and every index \(s\ge0\), there is an index \(t\) with
\[
        s\le t\le s+L_G
        \qquad\text{and}\qquad
        \tauD(x_t)>G.
\]
Consequently the set \(\{t\ge0:\tauD(\Tmap^t(n))>G\}\) has lower asymptotic density at least \((L_G+1)^{-1}\).
\end{corollary}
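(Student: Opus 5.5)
The plan is to apply \Cref{lem:crt-block} afresh at every starting time \(s\), with \(Z=x_s\), and then repeat the trapping argument from the proof of \Cref{thm:orbit-jumps}. Put \(L=G+1\) and \(B=G\). The lemma then gives an integer \(M\) with \(x_s\le M<x_s+Q_G\) such that \(\tauD(M+i)>G\) for \(1\le i\le G+1\). Now run the orbit forward from \(x_s\). If some \(x_t\) with \(t\ge s\) has \(\tauD(x_t)>G\) before the orbit passes \(M+G+1\), we take the first such \(t\). Otherwise every step up to that point has length at most \(G\). An orbit with steps of length at most \(G\) cannot jump over the \(G+1\) consecutive integers \(M+1,\ldots,M+G+1\), so it lands in the block, and there \(\tauD>G\). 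In either case the first index \(t\ge s\) with \(\tauD(x_t)>G\) satisfies \(x_t\le M+G+1<x_s+Q_G+G+1=x_s+L_G\).

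Next I convert this value bound into an index bound. Since \(\tauD\ge1\), the orbit is strictly increasing and each step raises the value by at least \(1\). Hence \(x_t-x_s\ge t-s\), which gives \(t-s\le x_t-x_s<L_G\). So \(s\le t\le s+L_G\). This is the only point that needs care, and it is easy: we never need to bound steps from below by more than \(1\).

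For the density statement, partition \(\{0,1,\dots\}\) into consecutive windows \(\{s,\dots,s+L_G\}\) with \(s=j(L_G+1)\). Each window contains a good index, by the first part. So among the first \(N\) indices there are at least \(\lfloor N/(L_G+1)\rfloor\) good indices. Dividing by \(N\) and taking \(\liminf\) gives lower density at least \((L_G+1)^{-1}\).

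I don't expect any step to be a real obstacle. The argument is the proof of \Cref{thm:orbit-jumps} rerun at an arbitrary starting time, together with the monotonicity conversion from values to indices.
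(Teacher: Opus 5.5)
Your proof is correct and follows the paper's argument. The paper cites \Cref{thm:orbit-jumps} with starting value \(x_s\), where you rerun its CRT-block trapping argument inline; it then converts the value bound to an index bound via \(t-s\le x_t-x_s\le L_G\) and gets the density from windows of length \(L_G+1\), exactly as you do.
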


\begin{proof}
Apply Theorem \ref{thm:orbit-jumps} with starting value \(x_s\).  It gives an iterate \(x_t\) with \(\tauD(x_t)>G\) and \(x_t\le x_s+L_G\).  Since every iterate increases by at least one, \(t-s\le x_t-x_s\le L_G\).  Thus every block of \(L_G+1\) consecutive time indices contains such an event.
\end{proof}

\begin{lemma}[Unbounded lower values in a nonterminal race]\label{lem:race-lower-values-unbounded}
In any lower-runner race that does not terminate, the sequence of lower values is strictly increasing and unbounded.
\end{lemma}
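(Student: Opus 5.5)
The plan is to read off both claims directly from the race update rules \eqref{eq:gap-update} and \eqref{eq:lower-update}. In a nonterminating race every state has values \(p<q\), so the gap \(g=q-p\) is at least \(1\). By \eqref{eq:lower-update} the next lower value is \(p+\min(g,\tauD(p))\). Since \(\tauD(p)\ge1\) and \(g\ge1\), this minimum is at least \(1\).

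\emph{Step 1 (well-definedness of the update).} We first check that \eqref{eq:lower-update} is really the next lower value when the race does not terminate. The new pair is \(\{\Tmap(p),q\}\).
\begin{itemize}
\item If \(\tauD(p)<g\), then \(\Tmap(p)<q\), so the new lower value is \(\Tmap(p)=p+\tauD(p)\).
\item If \(\tauD(p)>g\), then the new lower value is \(q=p+g\).
\item The case \(\tauD(p)=g\) is exactly termination, which is excluded.
\end{itemize}
In either surviving case the new lower value equals \(p+\min(g,\tauD(p))\ge p+1\). The new gap \(|\tauD(p)-g|\) is positive, because equality is excluded. Hence the race stays in the regime \(p<q\) forever.

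\emph{Step 2 (strict increase and unboundedness).} By Step 1, the lower values \(p_0<p_1<p_2<\cdots\) form a strictly increasing sequence of positive integers, with \(p_{i+1}\ge p_i+1\). By induction, \(p_i\ge p_0+i\to\infty\), so the sequence is unbounded.

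No step is a real obstacle. The only point needing care is that the nonterminal hypothesis rules out the tie \(\tauD(p)=g\), which keeps the gap positive at every stage.
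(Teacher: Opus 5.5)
Your proof is correct and follows the same route as the paper. Both arguments read off from \eqref{eq:lower-update} that each nonterminal move raises the lower value by \(\min(g,\tauD(p))\ge1\), so the integer sequence of lower values is strictly increasing and hence unbounded; your explicit case check of the update rule and the bound \(p_i\ge p_0+i\) only spell out more carefully what the paper states briefly.
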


\begin{proof}
At a race state with current values \(p<q\), the next lower value is
\(p+\min(q-p,\tauD(p))\), unless the race terminates.  Since both terms in the minimum are positive in a nonterminal move, the lower value strictly increases.  If the lower values were bounded, then this strictly increasing integer sequence would have only finitely many possible values and would eventually stop, impossible in a nonterminal race.  Equivalently, the current race values are always values of two strictly increasing unbounded forward orbits, so they cannot remain trapped below a fixed bound.
\end{proof}

\begin{theorem}[Quantitative lower-runner divergence]\label{thm:race-quant}
Suppose two \(\Tmap\)-orbits do not coalesce, and run the lower-runner race between them.  If \(p_0\) is the lower initial value, then for every integer \(G\ge2\) there is a race state with gap greater than \(G\) and lower value at most
\begin{equation}\label{eq:race-exact-scale}
        p_0+P_{(G+1)\left\lceil\frac{\log(2G+1)}{\log 2}\right\rceil}+2G+1.
\end{equation}
In particular, for some absolute constant \(C>0\), this lower value is at most
\begin{equation}\label{eq:race-uniform-scale}
        p_0+\exp\!\bigl(CG\log G\log(G\log G)\bigr),
\end{equation}
and the logarithm of the primorial part of this scale is
\begin{equation}\label{eq:race-asymp-scale}
        \log P_{(G+1)\left\lceil\frac{\log(2G+1)}{\log 2}\right\rceil}
        =\left(\frac{1}{\log 2}+o(1)\right)G\log G\log(G\log G).
\end{equation}
In particular, the lower-runner gap is unbounded.
\end{theorem}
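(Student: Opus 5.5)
Apply \Cref{lem:crt-block} with \(L=G+1\), \(B=2G\) and \(Z=p_0\). This produces a block \(M+1,\ldots,M+G+1\) with \(M<p_0+P_{K}\), where \(K=(G+1)\lceil\log(2G+1)/\log 2\rceil\), such that \(\tauD(M+i)>2G\) for every \(1\le i\le G+1\). This choice of parameters explains the \(\log(2G+1)\) in \eqref{eq:race-exact-scale}. The uniform and asymptotic scales \eqref{eq:race-uniform-scale} and \eqref{eq:race-asymp-scale} then follow from \eqref{eq:crt-uniform} and \eqref{eq:crt-asymp}, as in \Cref{thm:orbit-jumps}, since \(\log(2G)\sim\log G\).

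For the dynamical part, argue by contradiction. Suppose every race state whose lower value is at most \(M+G+1\) has gap \(g\le G\). By \Cref{lem:race-lower-values-unbounded} the lower values strictly increase without bound, so there is a first state whose lower value \(p\) satisfies \(p\ge M+1\). By \eqref{eq:lower-update}, each move advances the lower value by \(\min(g,\tauD(p'))\le g\le G\), so long as the gap stays at most \(G\). Hence the lower value cannot jump over the \(G+1\) consecutive integers of the block, and \(p\le M+G+1\) lies in the block. At that state \(\tauD(p)>2G\) and \(g\le G\). Since the race does not terminate, \eqref{eq:gap-update} gives a next gap of \(\tauD(p)-g>2G-G=G\). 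The next lower value is \(p+g\le M+2G+1\), which lies within the bound \eqref{eq:race-exact-scale}.

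The bookkeeping point to settle is which state is reported: the state after the move has lower value \(p+g\le p_0+P_K+2G+1\) and gap \(>G\). This accounts for the \(+2G+1\) in the statement, where the orbit theorem had \(+G+1\). The first-entry argument also needs a base case. If the very first state already has a gap larger than \(G\), we are done trivially, since its lower value is \(p_0\). If \(p_0>M\), take \(M\) from a block starting above \(Z=p_0\); the lemma guarantees \(M\ge Z\), so the first state lies below the block and the argument applies.

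Unboundedness of the gap follows by letting \(G\to\infty\). The main obstacle is not technical. It is getting the gap update right: termination is excluded by non-coalescence via \Cref{lem:race-detects}, so the new gap is \(|\tauD(p)-g|\) with \(\tauD(p)>g\).
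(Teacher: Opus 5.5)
Your proposal is correct and follows the paper's proof essentially step for step: the CRT block from \Cref{lem:crt-block} with \(L=G+1\), \(B=2G\), \(Z=p_0\); the lower runner forced into the block by moves of size at most \(G\); and the next state having gap \(\tauD(p)-g>G\) and lower value \(p+g\le M+2G+1\). The only difference is cosmetic. The paper states its contradiction hypothesis with threshold \(M+2G+1\), so the final state genuinely contradicts it, whereas your threshold \(M+G+1\) makes the argument a valid case split rather than a contradiction.
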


\begin{proof}
Apply Lemma \ref{lem:crt-block} with \(L=G+1\), \(B=2G\), and \(Z=p_0\).  This produces a block \(M+1,\ldots,M+G+1\), with \(M<p_0+P_{(G+1)\left\lceil\frac{\log(2G+1)}{\log 2}\right\rceil}\), on which \(\tauD>2G\).

Assume for contradiction that every race state with lower value at most \(M+2G+1\) has gap at most \(G\).  By \eqref{eq:lower-update}, the lower-runner sequence then increases by at most \(G\) at each race step.  By \Cref{lem:race-lower-values-unbounded}, the lower values are strictly increasing and unbounded.  Therefore they cannot jump over the block of length \(G+1\).  At some race state the lower value is \(p=M+i\), \(1\le i\le G+1\), and the gap is \(g\le G\).  Since \(\tauD(p)>2G\), the next gap is \(\tauD(p)-g>G\).  The lower value at the next race state is the previous upper value, namely \(p+g\le M+2G+1\), contradicting the assumption.  The displayed bounds follow from Lemma \ref{lem:crt-block}.
\end{proof}

\begin{corollary}[Syndetic large gaps in every non-coalescing race]\label{cor:syndetic-race-gaps}
Fix \(G\ge2\), and set
\[
        Q'_G=P_{(G+1)\left\lceil\frac{\log(2G+1)}{\log 2}\right\rceil},
        \qquad L'_G=Q'_G+2G+1.
\]
In any non-coalescing lower-runner race, from every race state, the current state or some later state within at most \(L'_G\) race moves has gap greater than \(G\).  Consequently the set of race times at which the gap exceeds \(G\) has lower asymptotic density at least \((L'_G+1)^{-1}\).
\end{corollary}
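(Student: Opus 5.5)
The plan is to mirror the proof of \Cref{cor:syndetic-orbit-jumps}, with \Cref{thm:race-quant} playing the role of \Cref{thm:orbit-jumps}. Fix a race state reached at race time \(s\), with values \(p_s<q_s\). The race continued from this state is itself a lower-runner race between two forward orbits. Its two runners are the current tails of the original orbits. These tails do not coalesce, since a common iterate of the tails would be a common iterate of the original orbits, contradicting \Cref{lem:race-detects}. Hence \Cref{thm:race-quant} applies to this restarted race, with lower initial value \(p_s\). It produces a later state, at some time \(t\ge s\), whose gap exceeds \(G\) and whose lower value satisfies \(p_t\le p_s+L'_G\).

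Next I convert this value bound into a time bound. By \Cref{lem:race-lower-values-unbounded}, the lower value increases strictly at every nonterminal race move, by at least one each time. Therefore \(t-s\le p_t-p_s\le L'_G\). This is exactly the claim that the current state, or some state within \(L'_G\) race moves of it, has gap greater than \(G\). The case \(t=s\) is allowed, which covers the "current state" alternative.

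For the density assertion, I observe that every window of \(L'_G+1\) consecutive race times \(\{s,\dots,s+L'_G\}\) contains a time with gap \(>G\). Partition \(\{0,\dots,N-1\}\) into \(\lfloor N/(L'_G+1)\rfloor\) such disjoint windows plus a remainder. This gives at least \(N/(L'_G+1)-1\) large-gap times below \(N\), and hence lower density at least \((L'_G+1)^{-1}\).

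The only step needing care is the restart. I must check that \Cref{thm:race-quant} applies to an intermediate state, not just to the original orbits. The issue is that its proof uses only the lower initial value of the race and the race dynamics, and these are unchanged by restarting. The one subtlety is the ordering convention: the race must restart with \(p_s<q_s\), which holds in any nonterminal state. The non-coalescence of the tails then follows as above.
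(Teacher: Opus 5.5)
Your proposal is correct and follows the paper's proof essentially verbatim. You restart \Cref{thm:race-quant} at the current state with \(p_s\) as the new lower initial value, convert \(p_t\le p_s+L'_G\) into \(t-s\le L'_G\) using the at-least-one increase of the lower value per nonterminal move, and count disjoint windows as in \Cref{cor:syndetic-orbit-jumps}; your explicit checks that the restarted race is again non-coalescing and that the window count gives the density bound simply spell out details the paper leaves implicit.
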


\begin{proof}
If the current race state already has gap greater than \(G\), there is nothing to prove.  Otherwise restart Theorem \ref{thm:race-quant} at the current race state, taking its lower value as the new initial lower value.  The theorem gives a later race state with lower value increased by at most \(L'_G\) and gap greater than \(G\).  Since the lower value increases by at least one at each non-terminal race move, this later state occurs within at most \(L'_G\) race moves.  The lower-density conclusion follows as in Corollary \ref{cor:syndetic-orbit-jumps}.
\end{proof}

\begin{corollary}\label{cor:race-growth}
For every non-coalescing pair of orbits, if \(G_{\max}(Y)\) denotes the maximum race gap among states whose lower value is at most \(p_0+Y\), then
\[
        G_{\max}(Y)\ge (\log 2-o(1))\frac{\log Y}{(\log\log Y)^2}
        \qquad(Y\to\infty).
\]
Likewise, for every orbit starting at \(n\),
\[
        \max_{\substack{t\ge0\\\Tmap^t(n)\le n+Y}}\tauD(\Tmap^t(n))
        \ge (\log 2-o(1))\frac{\log Y}{(\log\log Y)^2}.
\]
\end{corollary}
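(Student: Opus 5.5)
The first claim follows by inverting \eqref{eq:race-asymp-scale} in \Cref{thm:race-quant}, exactly as \eqref{eq:orbit-growth} was obtained from \eqref{eq:orbit-asymp-scale} in \Cref{thm:orbit-jumps}. The second claim is \eqref{eq:orbit-growth} itself, so nothing new is needed there.

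For the race statement, fix \(\eta>0\) and set
\[
        G=G(Y)=\left\lfloor(\log 2-\eta)\frac{\log Y}{(\log\log Y)^2}\right\rfloor ,
\]
so \(G\ge2\) for large \(Y\). \Cref{thm:race-quant} gives a race state with gap greater than \(G\) whose lower value is at most \(p_0+Q'_G+2G+1\), where \(Q'_G\) is the primorial from \Cref{cor:syndetic-race-gaps}. It therefore suffices to show \(Q'_G+2G+1\le Y\) for all large \(Y\).

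To do this, use \eqref{eq:race-asymp-scale}:
\[
        \log Q'_G=\Bigl(\tfrac1{\log2}+o(1)\Bigr)G\log G\log(G\log G).
\]
Since \(\log G=(1+o(1))\log\log Y\) and \(\log(G\log G)=(1+o(1))\log\log Y\), the factor \(\log G\log(G\log G)\) is \((1+o(1))(\log\log Y)^2\). Hence
\[
        \log Q'_G\le(1+o(1))\Bigl(1-\frac{\eta}{\log 2}\Bigr)\log Y .
\]
For large \(Y\) this gives \(Q'_G\le Y^{1-\eta/(2\log2)}\). The term \(2G+1\) is only polylogarithmic in \(Y\), so \(Q'_G+2G+1\le Y\) eventually.

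It follows that \(G_{\max}(Y)>G\ge(\log2-\eta)\log Y/(\log\log Y)^2-1\) for all large \(Y\). Letting \(\eta\downarrow0\), and absorbing the \(-1\) into the \(o(1)\), proves the claim.

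The only step needing care is the asymptotic bookkeeping: \(\log G\sim\log\log Y\) must hold uniformly enough that the \(o(1)\) in \eqref{eq:race-asymp-scale} can be used with \(G=G(Y)\to\infty\). This works because that \(o(1)\) is taken as \(G\to\infty\) and \(G(Y)\to\infty\). Everything else is routine.
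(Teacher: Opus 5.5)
Your proposal is correct and follows essentially the same route as the paper's proof: the same choice \(G=\lfloor(\log 2-\eta)\log Y/(\log\log Y)^2\rfloor\), inverting \eqref{eq:race-asymp-scale} to bring the primorial scale below \(Y\), applying \Cref{thm:race-quant}, and letting \(\eta\downarrow0\). Your explicit treatment of the additive \(2G+1\) term and your direct citation of \eqref{eq:orbit-growth} for the orbit claim are harmless refinements of steps the paper leaves implicit.
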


\begin{proof}
Let
\[
        G=\left\lfloor (\log 2-\eta)\frac{\log Y}{(\log\log Y)^2}\right\rfloor
\]
with fixed \(\eta>0\).  From \eqref{eq:race-asymp-scale} and \eqref{eq:orbit-asymp-scale}, the corresponding primorial scales are at most \(Y\) for all sufficiently large \(Y\).  The race assertion follows from Theorem \ref{thm:race-quant}; the orbit assertion follows from Theorem \ref{thm:orbit-jumps}.  Finally let \(\eta\downarrow0\).
\end{proof}

\section{Interval filling}\label{sec:interval-filling}

For \(n\in\N\), let \(\mathcal C(n)\) denote the component of \(\Gamma\) containing \(n\).  It is natural to isolate the interval-filling assertion
\begin{equation}\label{eq:IF}
        [n,\Tmap(n)]\cap\N\subseteq\mathcal C(n)\qquad(n\ge1).
\end{equation}
This is not proved unconditionally in this paper.  It is exactly as strong as the original problem.

\begin{proposition}\label{prop:IF-equivalence}
The following are equivalent:
\begin{enumerate}[label=(\alph*)]
\item \(\Gamma\) is connected;
\item \(n\sim n+1\) for every \(n\ge1\);
\item \([n,\Tmap(n)]\cap\N\subseteq\mathcal C(n)\) for every \(n\ge1\).
\end{enumerate}
\end{proposition}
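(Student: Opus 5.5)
I will prove the cycle of implications (a)\(\Rightarrow\)(c)\(\Rightarrow\)(b)\(\Rightarrow\)(a), using only the graph-theoretic facts already established in \Cref{sec:graph}.

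\emph{Step 1: (a)\(\Rightarrow\)(c).} If \(\Gamma\) is connected, then \(\mathcal C(n)=\N\) for every \(n\). Hence the inclusion \([n,\Tmap(n)]\cap\N\subseteq\mathcal C(n)\) holds trivially.

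\emph{Step 2: (c)\(\Rightarrow\)(b).} Fix \(n\ge1\). Since \(\tauD(n)\ge1\), we have \(\Tmap(n)\ge n+1\), so \(n+1\in[n,\Tmap(n)]\cap\N\). Assumption (c) then places \(n+1\) in \(\mathcal C(n)\). By \Cref{lem:equivalence}, membership in the same component is the relation \(\sim\), so \(n\sim n+1\).

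\emph{Step 3: (b)\(\Rightarrow\)(a).} This is exactly the nontrivial direction of the adjacent-pair reduction \Cref{lem:adjacent}. Chaining \(1\sim2\sim\cdots\sim n\) by transitivity puts every vertex in \(\mathcal C(1)\).

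There is no real obstacle here. The only point needing care is in Step 2: one must check that the interval \([n,\Tmap(n)]\) really contains \(n+1\), which follows from \(\tauD(n)\ge1\). One must also identify ``same component'' with \(\sim\), which is \Cref{lem:equivalence}. With these two checks, (c) already implies (b) using only the single point \(n+1\) of each interval. This also explains why interval filling is no weaker than the original problem.
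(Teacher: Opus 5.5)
Your proof is correct and matches the paper's argument: (a)\(\Rightarrow\)(c) is immediate, (c) gives \(n\sim n+1\) because \(n+1\in[n,\Tmap(n)]\) by \(\tauD(n)\ge1\), and the adjacent-pair reduction (\Cref{lem:adjacent}) gives connectedness. Arranging this as a single cycle, rather than as (a)\(\Leftrightarrow\)(b) plus (c)\(\Rightarrow\)(b), is only a cosmetic difference.
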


\begin{proof}
The equivalence of (a) and (b) is Lemma \ref{lem:adjacent}.  If \(\Gamma\) is connected, then (c) is immediate.  Conversely, if (c) holds, then \(n+1\in[n,\Tmap(n)]\) because \(\tauD(n)\ge1\), so \(n\sim n+1\) for every \(n\).  Lemma \ref{lem:adjacent} gives connectedness.
\end{proof}

\section{Two-branch criteria and square gates}\label{sec:square-gates}

The previous sections show that static moment estimates alone cannot force global
coalescence.  The next criterion isolates a purely dynamical route: eventual
one-step two-branch collapse, an opposite-parity condition on the two branches,
and infinitely many square-gate levels.

\begin{lemma}[Conditional square reset]\label{lem:square-reset}
Suppose at level \(k\) a two-branch set is \(B_k=\{0,c\}\) with \(c\) odd.  Suppose further that \(B_{k+1}=\AnnT_k(B_k)\) has size at most \(2\), and that whenever it has size \(2\), its two elements have opposite parity.  Then \(|B_{k+1}|=1\).
\end{lemma}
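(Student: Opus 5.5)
The plan is to apply the transfer parity law, \Cref{prop:parity}, to the two elements of \(B_k=\{0,c\}\). Since \(c\) is odd, in particular \(c>0\), the first clause gives
\[
        \AnnT_k(c)\equiv c+1\equiv 0\pmod 2,
\]
and the second clause gives \(\AnnT_k(0)\equiv 0\pmod 2\). In the language of \Cref{prop:exact-two-branch-criterion}, both \(0\) and \(c\) lie in the source class \(B_k^{\OddZero}\), and \(B_k^{\EvPlus}=\emptyset\). Therefore
\[
        B_{k+1}=\AnnT_k(B_k)=\{\AnnT_k(0),\AnnT_k(c)\}
\]
consists only of even offsets.

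The conclusion then follows by cases on \(|B_{k+1}|\). This set is nonempty because \(B_k\) is nonempty, and by hypothesis \(|B_{k+1}|\le2\). If \(|B_{k+1}|=2\), the hypothesis says its two elements have opposite parity. This contradicts the previous paragraph, where both elements were shown to be even. Hence \(|B_{k+1}|=1\), so \(\AnnT_k(0)=\AnnT_k(c)\).

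I expect no real obstacle. The only point to check is that the parity law applies exactly as stated. The offset \(0\) must be handled by the separate clause \(\AnnT_k(0)\equiv0\), not by the rule \(r+1\), which would wrongly give odd. Its proof relies on \(\tauD(k^2)<2k\), so the first step from \(k^2\) stays inside \(I_k\); that was already established in \Cref{prop:parity}. Once this is confirmed, the size hypothesis is only used to exclude \(|B_{k+1}|\ge3\), which is vacuous here anyway, since the image of a two-element set has at most two elements.
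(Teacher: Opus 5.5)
Your proposal is correct and matches the paper's own proof: apply the transfer parity law (\Cref{prop:parity}) to get both \(\AnnT_k(0)\) and \(\AnnT_k(c)\) even, so a two-element image would contradict the opposite-parity hypothesis. Your remark that the bound \(|B_{k+1}|\le 2\) is automatic for the image of a two-element set is also accurate.
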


\begin{proof}
By Proposition \ref{prop:parity}, \(\AnnT_k(0)\) is even.  Since \(c>0\) is odd, \(\AnnT_k(c)\equiv c+1\equiv0\pmod2\).  Thus both images have even parity.  Under the stated hypothesis, a two-element image would have opposite parities, which is impossible.  Hence the image is a singleton.
\end{proof}

\begin{theorem}[Square-gated two-branch criterion]\label{thm:square-gated-two-branch}
For \(k\ge2\) put \(U_k=\AnnT_k(E_k)\).  Suppose there is \(K_0\) such that for every \(k\ge K_0\),
\[
        |U_k|\le2,
\]
and whenever \(|U_k|=2\) the two elements of \(U_k\) have opposite parity.  Suppose also that \(0\in U_k\) for arbitrarily large \(k\).  Then \(\Gamma\) is connected.
\end{theorem}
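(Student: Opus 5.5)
The plan is to use the dynamic frontier bound, \Cref{prop:dynamic-frontier}. It suffices to produce arbitrarily large levels \(k\) with \(|W_{k,s}|=1\) for some \(s\), because then \(R(k^2-1)\le1\) for arbitrarily large \(k\), which forces \(\Gamma\) to be connected (as in \Cref{cor:dynamic-confluence} with \(L=1\)). The square-gate hypothesis \(0\in U_k\) will supply these levels, and the conditional square reset, \Cref{lem:square-reset}, will collapse the frontier one level later.

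Fix \(k\ge K_0\) with \(0\in U_k\); such \(k\) are arbitrarily large by hypothesis. By \Cref{def:confluence-widths}, \(W_{k,1}=U_k\), and \(|U_k|\le2\).

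\emph{Case \(|U_k|=1\).} Then \(|W_{k,1}|=1\), and we are done at this level.

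\emph{Case \(|U_k|=2\).} By hypothesis its two elements have opposite parity. One of them is \(0\), so \(U_k=\{0,c\}\) with \(c\) odd. Next, \(W_{k,2}=\AnnT_{k+1}(\{0,c\})\). Since \(U_k\subseteq E_{k+1}\) (\Cref{prop:image-E}), we have
\[
W_{k,2}\subseteq \AnnT_{k+1}(E_{k+1})=U_{k+1}.
\]
Because \(k+1\ge K_0\), we get \(|U_{k+1}|\le2\). If \(|W_{k,2}|=2\), then \(W_{k,2}=U_{k+1}\), so its two elements have opposite parity by hypothesis. Thus \(B_k=\{0,c\}\), viewed at level \(k+1\), satisfies the hypotheses of \Cref{lem:square-reset}: applying the parity law (\Cref{prop:parity}) with \(\AnnT_{k+1}\) in place of \(\AnnT_k\), both \(\AnnT_{k+1}(0)\) and \(\AnnT_{k+1}(c)\) are even. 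A two-element image would therefore be same-parity, which we have just excluded. Hence \(|W_{k,2}|=1\).

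In either case \(R(k^2-1)\le1\). Letting \(k\to\infty\) along the square-gate levels shows that every component meets the component of \(1\), so \(\Gamma\) is connected.

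The main point needing care is that the parity hypothesis is stated for the full set \(U_{k+1}\), not for the subset \(W_{k,2}\). The inclusion \(W_{k,2}\subseteq U_{k+1}\), together with \(|U_{k+1}|\le2\), is what transfers the hypothesis. A second point is that \Cref{lem:square-reset} is stated with index \(k\), but its proof uses only \Cref{prop:parity}, which holds at every level. So it applies verbatim with \(\AnnT_{k+1}\), with offsets in \(E_{k+1}\subseteq\{0,\dots,2k+2\}\), the domain of \(\AnnT_{k+1}\).
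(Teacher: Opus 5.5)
Your proof is correct and follows essentially the same route as the paper. You handle \(|U_k|=1\) by \Cref{prop:dynamic-frontier} with \(s=1\); in the case \(U_k=\{0,c\}\) with \(c\) odd you use \(W_{k,2}\subseteq U_{k+1}\), \(|U_{k+1}|\le 2\), and the parity law at level \(k+1\) to force \(|W_{k,2}|=1\), and the only cosmetic difference is that you route this through \Cref{lem:square-reset} re-indexed to \(\AnnT_{k+1}\) (correctly noting the index shift) where the paper runs the parity argument inline.
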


\begin{proof}
Choose arbitrarily large \(k\ge K_0\) with \(0\in U_k\).  If \(|U_k|=1\), then Proposition \ref{prop:dynamic-frontier}, with \(s=1\), gives \(R(k^2-1)\le1\).

It remains to treat the case \(U_k=\{0,c\}\) with \(c\ne0\).  By the opposite-parity hypothesis, \(c\) is odd.  Since \(U_k\subseteq E_{k+1}\), the next image
\[
        \AnnT_{k+1}(U_k)
\]
is a subset of \(U_{k+1}=\AnnT_{k+1}(E_{k+1})\).  Proposition \ref{prop:parity} gives \(\AnnT_{k+1}(0)\) even and \(\AnnT_{k+1}(c)\equiv c+1\equiv0\pmod2\).  Put
\[
        x=\AnnT_{k+1}(0),\qquad y=\AnnT_{k+1}(c).
\]
Both \(x\) and \(y\) belong to \(U_{k+1}\).  If \(x\ne y\), then \(U_{k+1}\) contains two distinct even elements.  Since \(k+1\ge K_0\), the hypothesis gives \(|U_{k+1}|\le2\); hence \(U_{k+1}=\{x,y\}\) would be a two-point set whose two elements have the same parity, contradicting the opposite-parity hypothesis.  Therefore \(x=y\), and \(|\AnnT_{k+1}(U_k)|=1\), i.e. \(|W_{k,2}|=1\).  Proposition \ref{prop:dynamic-frontier} now gives \(R(k^2-1)\le1\).

Since such \(k\) occur arbitrarily far out, every fixed initial interval \([1,X]\) is contained below some \(k^2-1\) with \(R(k^2-1)=1\).  Therefore every integer lies in the component of \(1\).
\end{proof}

\subsection{The Bunyakovsky obstruction to the square-gate route}\label{subsec:bunyakovsky-obstruction}

Theorem \ref{thm:square-gated-two-branch} isolates the extra ingredient missing from a two-branch collapse.  The hypotheses are concrete finite-level conditions on \(U_k=\AnnT_k(E_k)\); they do not assume that two branches have already coalesced.  The bound \(|U_k|\le2\) and the opposite-parity condition alone imply at most two global components by Corollary \ref{cor:dynamic-confluence}.  To collapse two branches to one, the theorem also requires infinitely many square gates \(0\in U_k\).  The next proposition records why that gate is arithmetically much harder than the preceding moment estimates.

\begin{proposition}[Square gates force divisor-value equations]\label{prop:square-gate-divisor-equation}
For \(k\ge2\), put \(U_k=\AnnT_k(E_k)\).  If \(0\in U_k\), then there is an integer \(j\) with \(1\le j\le2k\) such that
\[
        \tauD((k+1)^2-j)=j.
\]
Equivalently, the necessary consequence \(0\in E_{k+1}\) says that the next square frontier has a crossing which lands exactly on \((k+1)^2\).  In particular, an infinite supply of square gates coming from the simplest fixed deficit, namely \(j=2\), would ask for infinitely many primes of the polynomial
\[
        X^2-2.
\]
\end{proposition}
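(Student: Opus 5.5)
The plan is to chain together the inclusion \(U_k\subseteq E_{k+1}\) and the definition of the exit set \(E_{k+1}\). By \Cref{def:confluence-widths}, \(U_k=\AnnT_k(E_k)\subseteq\im(\AnnT_k)\), and \Cref{prop:image-E} gives \(\im(\AnnT_k)=E_{k+1}\). Hence \(0\in U_k\) forces \(0\in E_{k+1}\). By the definition of \(E_{k+1}\), there is an active deficit \(j\) with \(1\le j\le 2(k+1)-2=2k\) and
\[
        \tauD((k+1)^2-j)-j=0 .
\]
This is exactly the displayed divisor-value equation. The reformulation as a crossing that lands exactly on \((k+1)^2\) is then read off from \Cref{prop:square-frontier}: we have \(0\in E_{k+1}\) if and only if \((k+1)^2\in\Frontier_{(k+1)^2}\), i.e. some \(m=(k+1)^2-j<(k+1)^2\) satisfies \(\Tmap(m)=(k+1)^2\).

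For the final assertion I would specialize to the deficit \(j=2\). The equation then becomes \(\tauD(n^2-2)=2\) with \(n=k+1\), and \(\tauD(N)=2\) holds exactly when \(N\) is prime. So square gates produced through the deficit \(j=2\) at infinitely many levels would require infinitely many \(n\) with \(n^2-2\) prime, i.e. infinitely many prime values of \(X^2-2\). This is a Bunyakovsky-type statement, and it is open. I would note that \(X^2-2\) is irreducible with no fixed prime divisor, since \(n^2-2\) is odd for odd \(n\); so Bunyakovsky's conjecture predicts such primes, but no proof is known.

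There is no genuine obstacle. The only care needed is bookkeeping of the deficit range: the definition of \(E_{k+1}\) uses \(1\le j\le 2(k+1)-2\), which equals \(2k\), matching the statement. I would also phrase the last clause as an implication from the hypothetical infinite supply rather than as an equivalence. Gates at other deficits \(j\) lead to analogous equations \(\tauD((k+1)^2-j)=j\) that are not forced to be prime-value problems.
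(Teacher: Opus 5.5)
Your proposal is correct and follows the paper's proof. Both arguments use \(U_k\subseteq E_{k+1}\) (via \Cref{prop:image-E}), unwind the definition of \(E_{k+1}\) with the deficit range \(1\le j\le 2k\), and note that for \(j=2\) the equation \(\tauD((k+1)^2-2)=2\) means \((k+1)^2-2\) is prime. Your appeal to \Cref{prop:square-frontier} for the ``lands exactly on \((k+1)^2\)'' reformulation just makes explicit what the paper states in words.
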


\begin{proof}
By definition, \(U_k\subseteq E_{k+1}\).  If \(0\in U_k\), then \(0\in E_{k+1}\).  Unwinding the definition of \(E_{k+1}\), there is a deficit \(j\) with \(1\le j\le2(k+1)-2=2k\) and
\[
        0=\tauD((k+1)^2-j)-j,
\]
which is the displayed divisor-value equation.  When \(j=2\), this equation is \(\tauD((k+1)^2-2)=2\), which is equivalent to \((k+1)^2-2\) being prime.
\end{proof}

\begin{remark}[Why this is a genuine roadblock]
The proposition gives only a necessary condition for \(0\in U_k\), not a sufficient one: a square landing somewhere in the full annulus gives \(0\in E_{k+1}\), whereas the square-gated criterion needs the dynamically propagated thin frontier \(U_k\) itself to hit that landing.  Thus the square gate contains two layers of difficulty.  First, one must produce divisor-value equations such as \(\tauD((k+1)^2-j)=j\), and already the fixed case \(j=2\) is the Bunyakovsky/Bateman--Horn type problem of primes represented by \(X^2-2\).  Second, one must show that such landings are reached by the relevant branch of the annular dynamics.  Consequently, static divisor bounds for \(|E_k|\), or even density-one two-branch collapse without a square-gate input, cannot by themselves prove Erd\H{o}s--Graham coalescence.  A proof through this route must add genuinely arithmetic information about polynomial divisor values and genuinely dynamical information about which annular landings the thin frontier visits.
\end{remark}

\begin{remark}[What remains genuinely open]\label{rem:dynamic-open-targets}
The much stronger dynamic assertions \(\liminf_k|\AnnT_k(E_k)|=1\), or at least eventual two-branch collapse together with infinitely many square gates \(0\in\AnnT_k(E_k)\), remain open.  The present paper does not prove these statements.  By Corollary \ref{cor:dynamic-confluence}, Theorem \ref{thm:square-gated-two-branch}, and Proposition \ref{prop:square-gate-divisor-equation}, proving them in the stated strength would settle the Erd\H{o}s--Graham coalescence problem but would also require input beyond the static moment estimates above.  They should therefore be regarded as the central remaining analytic-dynamical obstacles.
\end{remark}

\section*{Acknowledgements}
The author acknowledges the use of OpenAI's ChatGPT during the preparation of this manuscript. While it was used for ideation, formulation, proof exploration and refinement, narrowing the search space, LaTeX formatting and other forms of orchestration, the author nonetheless takes full responsibility for the accuracy of the final contents of this paper.

\end{document}